\documentclass[11pt, a4paper]{article}
\usepackage[utf8]{inputenc}
\usepackage{amsmath,amssymb,amsfonts,dsfont}
\usepackage{amsthm}
\usepackage{mathtools}
\usepackage{verbatim}
\usepackage{a4wide}
\usepackage{epsfig,epic,eepic,graphicx}
\usepackage[mathcal]{euscript}
\usepackage{import}
\usepackage{xifthen}

\newif\ifshowdetails
\showdetailstrue % Uncomment to show the section
\usepackage[usenames,dvipsnames]{xcolor}
\usepackage[unicode, bookmarks, colorlinks, breaklinks, pagebackref,
]{hyperref}  
\hypersetup{linkcolor=OliveGreen,citecolor=OliveGreen,filecolor=black,urlcolor=Magenta}

\usepackage[capitalize,nameinlink,noabbrev]{cleveref}

\usepackage{cite}

\usepackage{enumitem}

\newtheorem{theorem}{Theorem}[section]

\newtheorem{lemma}[theorem]{Lemma}
\newtheorem{proposition}[theorem]{Proposition}%[section]

\newtheorem{definition}[theorem]{Definition}%[section]
\newtheorem{remark}[theorem]{Remark}%[section]
\numberwithin{equation}{section}
\AddToHook{env/lemma/begin}{\crefalias{theorem}{lemma}}
\AddToHook{env/proposition/begin}{\crefalias{theorem}{proposition}}
\AddToHook{env/definition/begin}{\crefalias{theorem}{definition}}
\AddToHook{env/example/begin}{\crefalias{theorem}{example}}
\AddToHook{env/lemma/begin}{\crefalias{theorem}{lemma}}
\AddToHook{env/remark/begin}{\crefalias{theorem}{remark}}
\AddToHook{env/corollary/begin}{\crefalias{theorem}{corollary}}

\newcommand{\TT}{{\mathbb T}}

\newcommand{\ZZ}{{\mathbb Z}}

\newcommand{\RR}{{\mathbb R}}
\newcommand{\CC}{{\mathbb C}}

\newcommand{\email}[1]{\href{mailto:#1}{\texttt{#1}}}

\newcommand{\I}{\operatorname{\mathbb{I}}}

\newcommand{\grad}{\mathrm{grad}}
\newcommand{\fluct}{\mathrm{fluct}}
\newcommand{\macro}{\mathrm{macro}}
\newcommand{\micro}{\mathrm{micro}}
\newcommand{\corr}{\mathrm{corr}}

\newcommand{\sta}{\mathrm{s}}
\newcommand{\bns}{\mathrm{b,ns}}

\newcommand{\CFS}{\mathsf{CFS}}

\newcommand{\Prob}{\mathrm{\mathbf{P}}}

\renewcommand{\Re}{\operatorname{Re}}

\newcommand{\abs}[1]{\left\lvert #1\right\rvert}
\newcommand{\norm}[1]{\left\lVert #1\right\rVert}
\newcommand{\cv}[1][]{%
\ifthenelse{\isempty{#1}}{\xrightarrow[\hphantom{~2~}]{}}{\xrightarrow[\hphantom{~2~}]{#1}}%
}
\newcommand{\wcv}[1][]{%
\ifthenelse{\isempty{#1}}{\xrightharpoonup[\hphantom{~2~}]{}}{\xrightharpoonup[\hphantom{~2~}]{#1}}%
}

\newcommand{\per}{\mathrm{per}}%{\sharp}%
\DeclareMathOperator{\Div}{{div}}

\DeclareMathOperator{\ran}{{ran}}
\DeclareMathOperator{\dom}{{dom}}

\DeclarePairedDelimiterX\Set[1]\{\}{%
  #1%
}
\allowdisplaybreaks

\begin{document}

\title{A point-free theory of quantitative homogenization}

\author{Thuyen Dang\footnote{Department of Mathematics and
    Statistics, Purdue University Northwest, 2200 169th Street,
    Hammond, Indiana 46323, USA (\email{ttdang@purdue.edu}).} \and
  Yuliya Gorb\footnote{National Science Foundation, Randolph Building,
    401 Dulany Street Alexandria, VA 22314, USA
    (\email{ygorb@nsf.gov}). } \and Silvia Jim\'{e}nez
  Bola\~{n}os\footnote{Department of Mathematics, Colgate University,
    13 Oak Drive, Hamilton, New York 13346, USA
    (\email{sjimenez@colgate.edu}). }
} \date{}

\maketitle

\begin{abstract}
  We introduce a purely operator-theoretic framework for quantitative
  homogenization that bypasses the traditional reliance on large-scale
  spatial regularity and probabilistic assumptions. Inspired by
  Tartar's vision of a \emph{point-free} theory, we derive explicit
  norm resolvent estimates using only the algebraic structure of
  multiscale operators and the abstract geometry of Hilbert spaces. In
  this framework, the effective macroscopic dynamics and the abstract
  corrector emerge naturally from an orthogonal decomposition of the
  state space, governed algebraically by a Schur complement. To
  quantify the convergence rate, we introduce a frequency-splitting
  technique and solve a generalized Sylvester equation that controls
  the commutator between the differential structure and the highly
  oscillatory material properties.

  This abstract perspective unifies stationary, non-stationary,
  periodic, quasi-periodic, and stochastic homogenization. We
  demonstrate that the physical distinctions between these media--and
  their respective convergence rates--are entirely captured by the
  behavior of the spectral measures of the microscopic and macroscopic
  derivative operators near zero frequency. Furthermore, our results
  apply for multiscale systems that do not uphold subadditivity of
  energy quantities or do not have a well-defined Brillouin zone.
\end{abstract}

\paragraph{Keywords} Multiscale analysis, qualitative homogenization,
quantitative homogenization, norm resolvent convergence, rate of
convergence, spectral theorem.
\paragraph{MSC Classification} 47B25, 35B27, 35P05.

% \tableofcontents{}

\section{Introduction}
\label{sec:introduction}

Homogenization theory provides a rigorous mathematical framework for
understanding how microscopic heterogeneities give rise to effective
macroscopic behaviors. Over the past half-century, this field has
matured into a vast and multifaceted discipline, driven by
applications ranging from composite materials to quantum
mechanics. The foundational monographs
\cite{bensoussanAsymptoticAnalysisPeriodic2011,jikovHomogenizationDifferentialOperators1994}
and subsequent developments have introduced a rich tapestry of
techniques---such as $H$-convergence and $H$-measure
\cite{tartarGeneralTheoryHomogenization2009,murat$H$convergence1997,tartarHomogeneisationHmesures1999,tartarHmeasuresPropagationEffects2017},
$\Gamma$-convergence and $G$-convergence \cite{spagnoloSulLimiteSoluzioni1967,dalmasoCompositeMediaHomogenization1991,chiadopiat$G$convergenceMonotoneOperators1990,buttazzo$Gamma$limitsIntegral1980,braidesHomogenizationMultipleIntegrals1998},
two-scale convergence and periodic unfolding
\cite{allaireHomogenizationTwoscaleConvergence1992,babadjianCharacterizationTwoscaleGradient2008,nguetsengGeneralConvergenceResult1989,cioranescuPeriodicUnfoldingMethod2018,francocircuRemarksTwoscaleConvergence2012,neukammTwoscaleHomogenizationAbstract2021},
Bloch wave analysis
\cite{allaireBlochwaveHomogenizationSpectral1996,brianeFirstBlochEigenvalue2014,liptonBlochWavesHigh2022,cooperUniformAsymptoticsFamily2023,allaireHomogenizationSchrodingerEquation2005},
spectral germ
\cite{birmanHomogenizationPeriodicDifferential2009,zhikovOperatorEstimatesHomogenization2016,cooperUniformAsymptoticsFamily2023,cherednichenkoResolventEstimatesHighcontrast2016,cooperFibreHomogenisation2019,cherednichenkoResolventEstimatesHomogenisation2018},
random integral
\cite{balRadiativeTransportLimit2002,zhangConvergenceSPDESchrodinger2014,balRandomIntegralsCorrectors2008,balCorrectorTheoryDiffusionHomogenization2012,balCorrelationsHeterogeneousWave2012}, 
network approximation
\cite{papanicolaouNetworkApproximationTransport1998,gerard-varetHomogenizationStiffInclusions2022,berlyandDiscreteNetworkApproximation2005,berlyandNetworkApproximationLimit2001,berlyandNetworkApproximationEffective2005,berlyandIntroductionNetworkApproximation2013}, 
viscosity solution
\cite{caffarelliRatesConvergenceHomogenization2010,tuRateConvergencePeriodic2021,tranHamiltonJacobiEquationsTheory2021,mitakeHomogenizationWeaklyCoupled2014}, etc.
---each
tailored to specific multiscale environments.

Although the \emph{qualitative} theory of homogenization is now well
understood in highly general settings, the \emph{quantitative}
theory, which seeks explicit rates of convergence, has traditionally
required rigid structural assumptions. In recent years, monumental
progress has been made in quantitative and high-contrast
homogenization
\cite{avellanedaCompactnessMethodsTheory1987,avellanedaCompactnessMethodsTheory1989,shenPeriodicHomogenizationElliptic2018,armstrongEllipticHomogenizationQualitative2022,armstrongQuantitativeStochasticHomogenization2019,gloriaQuantificationErgodicityStochastic2015,gloriaRegularityTheoryRandom2020,kenigUniformLipschitzEstimates2015,duerinckxEinsteinsEffectiveViscosity2023,gerard-varetHomogenizationBoundaryLayers2012,armstrongQuantitativeAnalysisBoundary2017,armstrongBoundedCorrectorsAlmost2016,armstrongRenormalizationGroupElliptic2025,armstrongQuantitativeHomogenizationLargescale2026,armstrongCalderonZygmundEstimates2016,armstrongLipschitzRegularityElliptic2016,armstrongAdditiveStructureElliptic2017,armstrongMesoscopicHigherRegularity2016,armstrongQuantitativeStochasticHomogenization2016,duerinckxStructureFluctuationsStochastic2020,gloriaQuantitativeEstimatesStochastic2021,gloriaQuantitativeVersionKipnis2013}. However,
these triumphs rely heavily on deep spatial analysis, such as
large-scale regularity theory, boundary layer estimates, and
sophisticated smoothing operators. Consequently, the resulting
convergence rates are often inextricably linked to the specific
geometry, topology, or probability measure of the underlying space.

In his pioneering work
\cite{murat$H$convergence1997,tartarGeneralTheoryHomogenization2009,tartarHomogeneisationHmesures1999,tartarQuestCeQue2007,tartarCompensatedCompactnessMore2015,tartarYoungMeasures1995,tartarHmeasuresNewApproach1990,tartarHmeasuresPropagationEffects2017},
Luc Tartar envisioned a theory of homogenization that transcends the
specificities of periodicity or probability. Inspired by this vision,
we ask: \emph{Is it possible to develop a purely ``point-free'' theory
  of quantitative homogenization?} That is, can we derive explicit
norm resolvent estimates and convergence rates using only the abstract
geometry of Hilbert spaces and the algebraic structure of the
multiscale operators, without ever evaluating a function at a spatial
point?

In this paper, we answer this question in the affirmative. We
introduce an abstract, operator-theoretic framework that places
stationary, non-stationary, periodic, quasi-periodic, and stochastic
homogenization on an equal footing. By stripping away the spatial and
probabilistic scaffolding, we isolate the fundamental algebraic
mechanisms that drive multiscale convergence. Our approach does not
rely on regularity theory, Gelfand/Bloch transforms, or standard
smoothing operators. Instead, it is built upon the geometry of the
scaling operator--defined abstractly as a family of unitary
transformations--and the algebraic properties of the Schur complement.

The heart of our method lies in a frequency-splitting norm resolvent
estimate for arbitrary pairs of operators (\cref{thm:rate}). We show
that the effective macroscopic behavior emerges naturally from an
orthogonal decomposition of the Hilbert space, where the abstract
corrector is exactly the Schur complement that algebraically
eliminates the microscopic fluctuations (\cref{thm:zero-mode}). To
quantify the convergence, we study the commutator between the
differential structure and the material properties. By solving a
generalized Sylvester equation, we obtain an explicit algebraic
representation of the homogenization error (\cref{thm:quan-homo}).

The beauty of this point-free perspective is that it distills the
physical differences between various media (e.g., stationary versus
non-stationary or periodic versus stochastic) into a single
mathematical object: the spectral measure of the macroscopic or
microscopic derivative operator near zero frequency, see
\cref{rem:nons-homo} and \cref{pro:freq-cut}. In the periodic setting,
a spectral gap yields the classical $\mathcal{O}(\varepsilon)$
rate. In the stochastic setting, the absence of a spectral gap
(infrared divergence) is controlled entirely by the mixing properties
of the medium, yielding the known sub-linear rates. We refer the
readers to \cref{sec:application} for the example of elliptic
divergence operator via our point-free approach, as well as its
limitation and how the celebrated concentration for sums ($\CFS$) of
Armstrong et al
\cite{armstrongEllipticHomogenizationQualitative2022,armstrongQuantitativeStochasticHomogenization2019,armstrongAdditiveStructureElliptic2017}
fills the gap in the stochastic setting.

We do not claim that this abstract approach replaces the deep and
intricate spatial theories mentioned above, which have recently
revolutionized the field. Rather, we offer it as a complementary lens,
one that clarifies the boundary between functional analysis and
multiscale geometry. Because our approach bypasses the need for
large-scale regularity, it is particularly well-suited for
applications where such regularity is not fully developed or
physically absent, such as in the study of topological insulators or
photonic crystals or highly singular non-stationary media
\cite{balTopologicalAndersonInsulators2024,balMagneticSlowdownTopological2024b,balMathematicalModelsTopologically2023a,quinnAsymmetricTransportMagnetic2024,yafaevSpectralPropertiesTranslationally2008,sagivEffectiveGapsContinuous2022,hoeferDefectModesHomogenization2011a,figotinBandGapStructureSpectra1996},
where the multiscale systems may lack of subadditivity of energy
quantities or a well-defined Brillouin zone. % , see examples in
% \cref{sec:topo-ande} and \cref{sec:hybr-stsp}.
By exactly identifying
which parts of homogenization are purely algebraic and which require
spatial structure, we hope to provide a robust foundation for tackling
multiscale problems in increasingly exotic settings.

\section{The  Axioms and Main Results}
\label{sec:first-order-axioms}
We will split into two parts corresponding to two levels of
abstraction so that the boundary of functional analysis and multiscale
analysis becomes clearer. At the first level, the results in
\cref{sec:norm-resolv-estim} hold for an arbitrary pair of two
operators $\mathcal{A}_{\varepsilon}$ and $\mathcal{A}_0$. Nevertheless, we will use the language of homogenization
to motivate the definitions and results in
\cref{sec:norm-resolv-estim}. At the second level, we specialize these
results by adding requirements from multiscale analysis to develop a
point-free theory of quantitative homogenization in
\cref{sec:point-free-quant}.

\subsection{A Norm Resolvent Estimate for Two Arbitrary Operators}
\label{sec:norm-resolv-estim}

Let
$\left(\mathfrak{H}, \left\langle \cdot,\cdot \right\rangle \right)$
be a Hilbert space with induced norm $\norm{\cdot}.$ Suppose the following axioms hold:
\begin{enumerate}[label=(A{\arabic*}),ref=\textnormal{(A{\arabic*})}]
\item\label{H:diff-stru} \emph{The Differential Structure:} Let
  $\mathcal{J} : \dom \mathcal{J} \subset \mathfrak{H} \to
  \mathfrak{H}$ be a closed, densely defined, skew-adjoint operator
  ($\mathcal{J}^{*} = -\mathcal{J}$).
\item\label{H:mate-prop} \emph{The Material Properties:} Let
  $\mathcal{M}_\varepsilon$ and $\mathcal{M}_0$ be bounded and
  accretive operators on $\mathfrak{H}$. Suppose there exist
  $M > 0$ and $m \ge 0$ such that for all $u \in \mathfrak{H}$:
  \begin{align}
  \label{eq:17}
   \norm{\mathcal{M}_{\varepsilon}}_{\mathfrak{H} \to \mathfrak{H}} \le M,~
    \norm{\mathcal{M}_0}_{\mathfrak{H}\to \mathfrak{H}}  \le M
    \quad \text{ and } \quad
   \Re \langle \mathcal{M}_\varepsilon u, u \rangle \ge m \|u\|^2,~
    \Re \langle \mathcal{M}_0 u, u \rangle \ge m \|u\|^2.
  \end{align}

\end{enumerate}

\begin{definition}
\label{def:oper-a}
Define the following operators regularized by $\alpha > 0$:
\begin{align}
\label{eq:9}
\mathcal{A}_\varepsilon = \mathcal{J} +\mathcal{M}_\varepsilon +
  \alpha I, \quad \mathcal{A}_0 = \mathcal{J} + \mathcal{M}_0 +  \alpha
  I.
\end{align}
\end{definition}

In the context of homogenization, $\mathcal{A}_{\varepsilon}$ and
$\mathcal{A}_{0}$ are the finescale/oscillatory and
homogenized/effective operators shifted by $\alpha I$,
respectively. The multiscale problem is written as
\begin{align}
\label{eq:71}
\mathcal{A}_{\varepsilon} u_{\varepsilon} = f.
\end{align}
As we will see later in \cref{lem:chec-H2} of
\cref{sec:point-free-quant}, the regularizer $\alpha$ is necessary for
invertibility of $\mathcal{A}_0$. To clear any potential confusions,
our Hilbert space $\mathfrak{H}$ is \emph{not} the (usually
Sobolev/Lebesgue) space associated to the multiscale system of partial
differential equations, but rather a \emph{global} space that also
takes into account the microscopic fluctuation. Taking the scalar
second order divergence operator
$- \Div \left( a_{\varepsilon} \nabla \cdot \right)$ as an example,
then $\mathfrak{H}$ is $L^2(\RR^d \times \TT^d; \RR^{d+1})$ (for
periodic setting) or $L^2(\RR^d \times \Omega; \RR^{d+1})$ (for
stochastic setting); see \cref{sec:application} for the concrete setup
of this example.  In homogenization theory, the effective operator
only governs the macroscopic behavior, so \emph{global} accretivity is
neither expected nor physically meaningful for $\mathcal{M}_0$ even if
one assumes $m > 0$ in~\ref{H:mate-prop}, thus the regularizer
$\alpha > 0$ is essential for $\mathcal{A}_0$ to be
invertible. Nevertheless, \cref{lem:chec-H2} also shows that
$\mathcal{M}_0$ is indeed accretive on a \emph{proper subspace} of
$\mathfrak{H}$ that consists of macroscopic functions, which agrees
with the classical results in homogenization. Since \cref{thm:rate}
only concerns general pairs of operators $\mathcal{M}_{\varepsilon}$
and $\mathcal{M}_0$ without using any homogenization structure, one
can safely absorb the shift $\alpha I$ into
$\mathcal{M}_{\varepsilon}$ and $\mathcal{M}_0$ and assume $m > 0$ in
\ref{H:mate-prop}. However, when dealing with homogenization in
\cref{sec:point-free-quant}, $\alpha$ needs to be brought back as
explained above.

Let $\mathfrak{V} \coloneqq \dom(\mathcal{J})$. We equip $\mathfrak{V}$ with the graph inner product:
\begin{align}
\label{eq:158}
  \langle u, v \rangle_\mathfrak{V}
  \coloneqq \langle u, v \rangle + \langle \mathcal{J}u, \mathcal{J}v \rangle,
\end{align}
which induces the graph norm: 
\begin{align}
\label{eq:161}
  \norm{u}_\mathfrak{V}^2
  = \norm{u}^2 + \norm{\mathcal{J}u}^2.
\end{align}
Because $\mathcal{J}$ is a closed operator, $\mathfrak{V}$ is a
Hilbert space.  Because $\mathcal{J}$ is densely defined on
$\mathfrak{H}$, we obtain the Gelfand triple
 \begin{align}
 \label{eq:159}
 \mathfrak{V} \hookrightarrow
 \mathfrak{H} \hookrightarrow \mathfrak{V}^*,
 \end{align}
 where $\mathfrak{V}^*$ is the dual space of $\mathfrak{V}$, and we
 identify $\mathfrak{H} \cong \mathfrak{H}^{*}$. By definition of the
 graph norm $\norm{\cdot}_{\mathfrak{V}}$, we have $\mathcal{J}$ is a
 bounded operator from $\mathfrak{V} \to \mathfrak{H}$.

 We use skew-adjointness to extend $\mathcal{J}$ uniquely to a bounded
 operator from $\mathfrak{H} \to \mathfrak{V}^*$, where we still
 denote the extension by $\mathcal{J}$. For any $u \in \mathfrak{H}$,
 we define $\mathcal{J}u \in \mathfrak{V}^*$ via the duality pairing 
\begin{align}
\label{eq:160}
  \langle \mathcal{J}u, v \rangle_{\mathfrak{V}^{*}, \mathfrak{V}}
  \coloneqq
 -\langle u, \mathcal{J}v \rangle.
\end{align}
This extension is bounded because
$\abs{\langle u, \mathcal{J}v \rangle}\le
\norm{u} \norm{\mathcal{J}v} \le
\norm{u} \norm{v}_{\mathfrak{V}}$. Thus,
$\norm{\mathcal{J}}_{\mathfrak{H} \to \mathfrak{V}^*} \le 1$.
\begin{lemma}
\label{lem:inve-coer}
Assume~\ref{H:diff-stru} and~\ref{H:mate-prop}, then
$\mathcal{A}_{\varepsilon}$ and $\mathcal{A}_0$ are boundedly
invertible, with 
\begin{align}
\label{eq:19}
\norm{\mathcal{A}_\varepsilon^{-1}}_{\mathfrak{H}
  \to \mathfrak{H}} \le
\frac{1}{m + \alpha} \quad \text{ and } \quad \norm{\mathcal{A}_0^{-1}}_{\mathfrak{H}
  \to \mathfrak{H}} \le \frac{1}{m + \alpha}.
\end{align}
Furthermore, $\mathcal{A}_\varepsilon^{-1}$ and $\mathcal{A}_0^{-1}$ are bounded operators from $\mathfrak{H} \to \mathfrak{V}$ and from $\mathfrak{V}^* \to \mathfrak{H}$, with
\begin{align}
\label{eq:162}
\norm{\mathcal{A}_\varepsilon^{-1}}_{\mathfrak{H} \to \mathfrak{V}},\norm{\mathcal{A}_0^{-1}}_{\mathfrak{H} \to \mathfrak{V}}
  \le C_\mathfrak{V}
  \quad \text{ and } \quad \norm{\mathcal{A}_\varepsilon^{-1}}_{\mathfrak{V}^* \to \mathfrak{H}},\norm{\mathcal{A}_0^{-1}}_{\mathfrak{V}^* \to \mathfrak{H}} \le C_\mathfrak{V},
\end{align}
where $C_\mathfrak{V} \coloneqq \frac{1}{m+\alpha} \sqrt{1 + \left(\frac{M+\alpha}{m+\alpha}\right)^2}$.
\end{lemma}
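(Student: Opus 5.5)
We treat $\mathcal{A}_\varepsilon$; the argument for $\mathcal{A}_0$ is identical, since only \ref{H:diff-stru}, the bounds \eqref{eq:17} and $\alpha>0$ are used. Take $\mathcal{A}_\varepsilon$ with domain $\mathfrak{V}=\dom\mathcal{J}$. It is a bounded perturbation of the skew-adjoint $\mathcal{J}$, hence closed, and its adjoint is $\mathcal{A}_\varepsilon^* = -\mathcal{J}+\mathcal{M}_\varepsilon^*+\alpha I$ on the same domain.

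\emph{Step 1: the $\mathfrak{H}$-bound.} Skew-adjointness gives $\Re\langle \mathcal{J}u,u\rangle=0$ for $u\in\mathfrak{V}$. Hence
\[
\Re\langle \mathcal{A}_\varepsilon u,u\rangle \ge (m+\alpha)\norm{u}^2 ,
\]
and by Cauchy--Schwarz $\norm{\mathcal{A}_\varepsilon u}\ge (m+\alpha)\norm{u}$. So $\mathcal{A}_\varepsilon$ is injective with closed range. The same estimate holds for $\mathcal{A}_\varepsilon^*$, because $\Re\langle\mathcal{M}_\varepsilon^*u,u\rangle=\Re\langle\mathcal{M}_\varepsilon u,u\rangle\ge m\norm{u}^2$. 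Thus $\ker\mathcal{A}_\varepsilon^*=\{0\}$, the range is dense and closed, and $\mathcal{A}_\varepsilon$ is bijective with $\norm{\mathcal{A}_\varepsilon^{-1}}\le 1/(m+\alpha)$. (Alternatively one could apply Lax--Milgram to the coercive form on $\mathfrak{V}$, but coercivity only holds in the $\mathfrak{H}$-norm, so the range argument above is cleaner.)

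\emph{Step 2: $\mathfrak{H}\to\mathfrak{V}$.} For $u=\mathcal{A}_\varepsilon^{-1}f$ we have $\mathcal{J}u=f-(\mathcal{M}_\varepsilon+\alpha)u$. Using $\norm{f}\le(M+\alpha)\norm{u}+\norm{\mathcal{J}u}$ naively gives the wrong shape, so instead bound directly:
\[
\norm{\mathcal{J}u}\le \norm{f}+(M+\alpha)\norm{u}\le \Bigl(1+\tfrac{M+\alpha}{m+\alpha}\Bigr)\norm{f}.
\]
This yields $\norm{u}_{\mathfrak{V}}^2\le (m+\alpha)^{-2}\norm{f}^2+\bigl(1+\tfrac{M+\alpha}{m+\alpha}\bigr)^2\norm{f}^2$, which is a bound of the right type but not exactly $C_\mathfrak{V}$. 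To get the stated constant, which is sharper, I would try the following estimate for $\mathcal{J}u$ first. Pair the equation with $\mathcal{J}u$ and use $\Re\langle (\mathcal{M}_\varepsilon+\alpha)u,\mathcal{J}u\rangle$. I am not sure this closes; if it does not, I accept a constant of the same form. The claimed $C_\mathfrak{V}$ then amounts to $\norm{\mathcal{J}u}\le \frac{M+\alpha}{(m+\alpha)}\cdot\frac{\norm f}{m+\alpha}$-type control scaled appropriately. I expect this constant bookkeeping to be the main obstacle, since the structural content is already in Step 1.

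\emph{Step 3: $\mathfrak{V}^*\to\mathfrak{H}$ by duality.} Applying Step 2 to $\mathcal{A}_\varepsilon^*$, whose $M,m,\alpha$ bounds are the same, gives $\norm{(\mathcal{A}_\varepsilon^*)^{-1}}_{\mathfrak{H}\to\mathfrak{V}}\le C_\mathfrak{V}$. Extend $\mathcal{A}_\varepsilon$ to $\mathfrak{H}\to\mathfrak{V}^*$ using \eqref{eq:160}, and define for $g\in\mathfrak{V}^*$
\[
\langle \mathcal{A}_\varepsilon^{-1}g,h\rangle:=\langle g,(\mathcal{A}_\varepsilon^*)^{-1}h\rangle_{\mathfrak{V}^*,\mathfrak{V}}.
\]
This gives $\norm{\mathcal{A}_\varepsilon^{-1}g}\le C_\mathfrak{V}\norm{g}_{\mathfrak{V}^*}$. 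The operator is consistent with the $\mathfrak{H}$-inverse on $\mathfrak{H}\subset\mathfrak{V}^*$, and it inverts the extension because $\mathfrak{H}$ is dense in $\mathfrak{V}^*$.
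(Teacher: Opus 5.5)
Your Steps 1 and 3 follow the paper's proof. Both get $\norm{\mathcal{A}_\varepsilon u}\ge(m+\alpha)\norm{u}$ from $\Re\langle\mathcal{J}u,u\rangle=0$. Both get surjectivity by showing $\ran(\mathcal{A}_\varepsilon)^\perp=\ker\mathcal{A}_\varepsilon^*=\{0\}$. Both get the $\mathfrak{V}^*\to\mathfrak{H}$ bound by dualizing the $\mathfrak{H}\to\mathfrak{V}$ bound for $\mathcal{A}_\varepsilon^*$; your Step 3 spells out what the paper calls ``taking the dual''. Step 2 is also exactly the paper's computation, and you stop where that computation actually ends. From $\norm{u}\le\norm{f}/(m+\alpha)$ and $\norm{\mathcal{J}u}\le\bigl(1+\frac{M+\alpha}{m+\alpha}\bigr)\norm{f}$, the paper simply asserts $\norm{u}_{\mathfrak{V}}\le C_{\mathfrak{V}}\norm{f}$, which does not follow. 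You were right to flag this. Do not try to close it by pairing with $\mathcal{J}u$ or any other way: the stated constant is false. Your intermediate target $\norm{\mathcal{J}u}\lesssim\frac{M+\alpha}{(m+\alpha)^2}\norm{f}$ is also unattainable, since high-frequency data make $\norm{\mathcal{J}u}$ nearly equal to $\norm{f}$.

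Here is a concrete counterexample. Take $\mathfrak{H}=L^2(\RR)$, $\mathcal{J}=\frac{d}{dx}$ on $H^1(\RR)$, and $\mathcal{M}_\varepsilon=\mathcal{M}_0=mI$ with $m>0$, so $M=m$ is admissible. Then $\mathcal{A}_\varepsilon^{-1}$ is the Fourier multiplier $(i\xi+m+\alpha)^{-1}$, and
\[
\norm{\mathcal{A}_\varepsilon^{-1}}_{\mathfrak{H}\to\mathfrak{V}}^2=\sup_{\xi\in\RR}\frac{1+\xi^2}{\xi^2+(m+\alpha)^2}\ge 1 .
\]
On the other hand, $C_{\mathfrak{V}}^2=2/(m+\alpha)^2<1$ as soon as $m+\alpha>\sqrt{2}$. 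By duality the $\mathfrak{V}^*\to\mathfrak{H}$ bound fails as well, since the adjoint has a multiplier of the same modulus.

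The same failure occurs for every unbounded $\mathcal{J}$. Let $u\neq 0$ be spectrally supported where $\abs{\lambda}\ge\Lambda$, with $\lambda$ the spectral variable of $-i\mathcal{J}$. Then
\[
\norm{\mathcal{A}_\varepsilon u}\le\bigl(1+\tfrac{M+\alpha}{\Lambda}\bigr)\norm{\mathcal{J}u}\le\bigl(1+\tfrac{M+\alpha}{\Lambda}\bigr)\norm{u}_{\mathfrak{V}},
\]
so $\norm{\mathcal{A}_\varepsilon^{-1}}_{\mathfrak{H}\to\mathfrak{V}}\ge 1$ after letting $\Lambda\to\infty$. Meanwhile $C_{\mathfrak{V}}\to0$ as $\alpha\to\infty$.

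So your argument is a complete proof of the lemma with $C_{\mathfrak{V}}$ replaced by $\bigl((m+\alpha)^{-2}+\bigl(1+\frac{M+\alpha}{m+\alpha}\bigr)^2\bigr)^{1/2}$, and that is the correct repair. \cref{thm:rate} and \cref{thm:quan-homo} only use finiteness of this constant, so they remain valid with $C_1,C_2$ adjusted accordingly.
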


We will introduce another axiom to derive a norm resolvent estimate
for $\mathcal{A}_{\varepsilon}$ and $\mathcal{A}_0$. 
\begin{enumerate}[resume*]
\item\label{H:comm-equa}\emph{The Commutator Equation:} Let the
  material difference
  \begin{align}
\label{eq:54}
  \mathcal{V}_{\varepsilon}
  \coloneqq \mathcal{M}_0 - \mathcal{M}_{\varepsilon},
  \end{align}
  and
  $\Pi_0$ be an orthogonal projection on $\mathfrak{H}$. Define
  $\Pi_\perp \coloneqq I - \Pi_0$. For $\varepsilon > 0$ and
  regularizer $\delta > 0$, there exist operators $\mathcal{B}_\varepsilon$ and $\mathcal{R}_\varepsilon$ that are bounded from $\mathfrak{V} \to \mathfrak{H}$ and from $\mathfrak{H} \to \mathfrak{V}^*$, such that the following commutator equation holds in $\mathfrak{V}^*$ for all $u \in \mathfrak{V}$:
   \begin{align}
   \label{eq:11}
     \mathcal{V}_{\varepsilon}\Pi_0
     = [\mathcal{J}, \mathcal{B}_{\varepsilon}] + \delta \mathcal{B}_{\varepsilon} + \mathcal{R}_{\varepsilon}.
   \end{align}
 \end{enumerate}

 Here, $ [\mathcal{J}, \mathcal{B}_{\varepsilon}]$ denotes the
 commutator and is given by
 $[\mathcal{J}, \mathcal{B}_{\varepsilon}] \coloneqq \mathcal{J}
 \mathcal{B}_{\varepsilon} - \mathcal{B}_{\varepsilon} \mathcal{J}$;
 so \eqref{eq:11} becomes
  \begin{align}
    \label{eq:10}
    \left( \mathcal{M}_0 - \mathcal{M}_\varepsilon \right)\Pi_0
    =
    \mathcal{J} \mathcal{B}_{\varepsilon} - \mathcal{B}_{\varepsilon}
  \mathcal{J}+ \delta \mathcal{B}_{\varepsilon} + \mathcal{R}_{\varepsilon},
  \end{align}
  which is a generalized Sylvester equation. The regularizer $\delta$
  ensures the solvability of \eqref{eq:10}, which can be seen by using
  spectral theorem.

\begin{theorem}[Frequency-splitting Norm Resolvent
  Estimate]~\label{thm:rate}
  Suppose~\ref{H:diff-stru},~\ref{H:mate-prop},
  and~\ref{H:comm-equa}. Then
\begin{align}
\label{eq:12}
  \begin{split}
    \norm{\mathcal{A}_\varepsilon^{-1} - \mathcal{A}_0^{-1}}_{\mathfrak{H}
    \to \mathfrak{H}}
  &\le C_1 \left(
    \norm{\mathcal{B}_\varepsilon}_{\mathfrak{V} \to \mathfrak{H}} + 
    \norm{\mathcal{B}_\varepsilon}_{\mathfrak{H} \to \mathfrak{V}^*} \right)\\
     &{}\qquad{}+{}
        C_2\left(
    \norm{\mathcal{R}_\varepsilon}_{\mathfrak{V} \to \mathfrak{H}} + 
    \norm{\mathcal{R}_\varepsilon}_{\mathfrak{H} \to \mathfrak{V}^*}
       \right)\\
    &{}\qquad{}+{} C_3 \norm{\Pi_{\perp} \mathcal{A}_0^{-1}}_{\mathfrak{H}
    \to \mathfrak{H}},
  \end{split}
\end{align}
where 
\begin{align*}
  C_1
  &\coloneqq  C_{\mathfrak{V}}\left( 1 + \frac{M+\alpha}{m+\alpha} +
    \frac{\delta}{2 (m+\alpha)}\right),\\
  C_2
  &\coloneqq \frac{ C_{\mathfrak{V}}}{2(m+\alpha)},\\
  C_3
  &\coloneqq \frac{2M}{m +\alpha}.
\end{align*}
\end{theorem}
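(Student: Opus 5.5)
Since $\Pi_0+\Pi_\perp=I$, we write
\begin{align*}
\mathcal{A}_\varepsilon^{-1}-\mathcal{A}_0^{-1}
= \mathcal{A}_\varepsilon^{-1}\left(\mathcal{A}_0-\mathcal{A}_\varepsilon\right)\mathcal{A}_0^{-1}
= \mathcal{A}_\varepsilon^{-1}\mathcal{V}_\varepsilon\Pi_0\mathcal{A}_0^{-1}
+ \mathcal{A}_\varepsilon^{-1}\mathcal{V}_\varepsilon\Pi_\perp\mathcal{A}_0^{-1}.
\end{align*}
The second term is the high-frequency part. We bound it directly using \cref{lem:inve-coer} and \ref{H:mate-prop}: $\norm{\mathcal{A}_\varepsilon^{-1}}\le (m+\alpha)^{-1}$ and $\norm{\mathcal{V}_\varepsilon}\le 2M$. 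This gives exactly $C_3\norm{\Pi_\perp\mathcal{A}_0^{-1}}$.

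For the low-frequency part we substitute the commutator equation \eqref{eq:10}. Applied to $\mathcal{A}_0^{-1}f\in\mathfrak{V}$ (by \cref{lem:inve-coer}), it gives
\begin{align*}
\mathcal{A}_\varepsilon^{-1}\mathcal{V}_\varepsilon\Pi_0\mathcal{A}_0^{-1}
= \mathcal{A}_\varepsilon^{-1}\mathcal{J}\mathcal{B}_\varepsilon\mathcal{A}_0^{-1}
- \mathcal{A}_\varepsilon^{-1}\mathcal{B}_\varepsilon\mathcal{J}\mathcal{A}_0^{-1}
+ \delta\,\mathcal{A}_\varepsilon^{-1}\mathcal{B}_\varepsilon\mathcal{A}_0^{-1}
+ \mathcal{A}_\varepsilon^{-1}\mathcal{R}_\varepsilon\mathcal{A}_0^{-1}.
\end{align*}
All compositions are taken in the Gelfand triple \eqref{eq:159}. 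The key algebraic step is to trade $\mathcal{J}$ for $\mathcal{A}$ minus bounded pieces. We write $\mathcal{J}=\mathcal{A}_\varepsilon-\mathcal{M}_\varepsilon-\alpha I$ on the left and $\mathcal{J}=\mathcal{A}_0-\mathcal{M}_0-\alpha I$ on the right, both valid as identities from $\mathfrak{V}$ or $\mathfrak{H}$ into $\mathfrak{V}^*$ after extension. Then $\mathcal{A}_\varepsilon^{-1}\mathcal{J}\mathcal{B}_\varepsilon\mathcal{A}_0^{-1}$ becomes
\begin{align*}
\mathcal{B}_\varepsilon\mathcal{A}_0^{-1}-\mathcal{A}_\varepsilon^{-1}(\mathcal{M}_\varepsilon+\alpha)\mathcal{B}_\varepsilon\mathcal{A}_0^{-1},
\end{align*}
and $\mathcal{A}_\varepsilon^{-1}\mathcal{B}_\varepsilon\mathcal{J}\mathcal{A}_0^{-1}$ becomes
\begin{align*}
\mathcal{A}_\varepsilon^{-1}\mathcal{B}_\varepsilon-\mathcal{A}_\varepsilon^{-1}\mathcal{B}_\varepsilon(\mathcal{M}_0+\alpha)\mathcal{A}_0^{-1}.
\end{align*}
The unbounded $\mathcal{J}$ never has to act on $\mathcal{B}_\varepsilon$ in $\mathfrak{H}$.

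Each resulting term is then estimated in one of two routes.
\begin{itemize}
\item Route $\mathfrak{H}\to\mathfrak{V}\to\mathfrak{H}$: use $\norm{\mathcal{A}_0^{-1}}_{\mathfrak{H}\to\mathfrak{V}}\le C_\mathfrak{V}$ together with $\norm{\mathcal{B}_\varepsilon}_{\mathfrak{V}\to\mathfrak{H}}$.
\item Route $\mathfrak{H}\to\mathfrak{V}^*\to\mathfrak{H}$: use $\norm{\mathcal{B}_\varepsilon}_{\mathfrak{H}\to\mathfrak{V}^*}$ together with $\norm{\mathcal{A}_\varepsilon^{-1}}_{\mathfrak{V}^*\to\mathfrak{H}}\le C_\mathfrak{V}$.
\end{itemize}
Factors such as $(M+\alpha)/(m+\alpha)$ come from $\norm{\mathcal{M}+\alpha}\le M+\alpha$ combined with the $\mathfrak{H}$-bound on the other resolvent. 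The $\delta$ and $\mathcal{R}_\varepsilon$ terms are split symmetrically between the two routes, which produces the halves: $\delta/(2(m+\alpha))$ in $C_1$ and $C_\mathfrak{V}/(2(m+\alpha))$ in $C_2$.

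The pairing of terms $\mathcal{B}_\varepsilon\mathcal{A}_0^{-1}$ with $\mathcal{A}_\varepsilon^{-1}\mathcal{B}_\varepsilon$ is what yields the leading factor $1$ in $C_1$. I expect to bound these two separately, one via each route. The exact constants are bookkeeping; I will verify only that the collected coefficients do not exceed $C_1$, $C_2$, $C_3$.

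The main obstacle is justifying the substitutions rigorously. The commutator identity holds only in $\mathfrak{V}^*$ for inputs in $\mathfrak{V}$. The products $\mathcal{J}\mathcal{B}_\varepsilon$ and $\mathcal{B}_\varepsilon\mathcal{J}$ must be interpreted with $\mathcal{A}_\varepsilon^{-1}$ extended to $\mathfrak{V}^*$. In particular, the term $\mathcal{B}_\varepsilon\mathcal{J}\mathcal{A}_0^{-1}$ requires $\mathcal{B}_\varepsilon$ acting on $\mathcal{J}\mathcal{A}_0^{-1}f\in\mathfrak{H}$ with values in $\mathfrak{V}^*$, which is exactly the assumed $\mathfrak{H}\to\mathfrak{V}^*$ boundedness.

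I will check the following by testing against $v\in\mathfrak{V}$ and using the weak definition \eqref{eq:160}:
\begin{itemize}
\item the extended $\mathcal{A}_\varepsilon^{-1}$ on $\mathfrak{V}^*$ is a genuine inverse of the extended $\mathcal{A}_\varepsilon$;
\item $\mathcal{A}_\varepsilon^{-1}\mathcal{A}_\varepsilon w=w$ holds for $w=\mathcal{B}_\varepsilon\mathcal{A}_0^{-1}f\in\mathfrak{H}$.
\end{itemize}
This consistency, presumably implicit in \cref{lem:inve-coer}, is the step requiring the most care.
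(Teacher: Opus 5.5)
Your proposal is correct and follows the paper's proof essentially step for step. The shared steps are: the resolvent identity split by $\Pi_0+\Pi_\perp$; substituting the commutator equation; trading $\mathcal{J}$ for $\mathcal{A}_\varepsilon-\mathcal{M}_\varepsilon-\alpha I$ on the left and $\mathcal{A}_0-\mathcal{M}_0-\alpha I$ on the right; and bounding each term through $\mathfrak{H}\to\mathfrak{V}\to\mathfrak{H}$ or $\mathfrak{H}\to\mathfrak{V}^*\to\mathfrak{H}$, with the $\delta$- and $\mathcal{R}_\varepsilon$-terms bounded by the average of the two routes to produce the factors $1/2$. Your extra consistency check is something the paper leaves implicit, namely that the extension of $\mathcal{A}_\varepsilon^{-1}$ to $\mathfrak{V}^*$, obtained as the dual of $(\mathcal{A}_\varepsilon^*)^{-1}\colon\mathfrak{H}\to\mathfrak{V}$, inverts the extended $\mathcal{A}_\varepsilon$ on $\mathfrak{H}$; it goes through by pairing with $v=(\mathcal{A}_\varepsilon^*)^{-1}g\in\mathfrak{V}$ and using the definition \eqref{eq:160}.
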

\cref{thm:rate} includes an important idea from the proofs of the
Rellich-Kondrachov Theorem and the Div-Curl Lemma: splitting the Fourier domain into low and high frequencies.

To obtain the convergence rate of
$\mathcal{A}_{\varepsilon}^{-1} - \mathcal{A}_0^{-1}$ by
\cref{thm:rate}, we need to estimate
$\norm{\mathcal{B}_\varepsilon}_{\mathfrak{V} \to \mathfrak{H}},$
$ \norm{\mathcal{B}_\varepsilon}_{\mathfrak{H} \to \mathfrak{V}^*}$,
$ \norm{\mathcal{R}_\varepsilon}_{\mathfrak{V} \to \mathfrak{H}},$
$ \norm{\mathcal{R}_\varepsilon}_{\mathfrak{H} \to \mathfrak{V}^*}$,
and
$\norm{\Pi_{\perp} \mathcal{A}_0^{-1}}_{\mathfrak{H} \to
  \mathfrak{H}}$. In the context of homogenization, the last term
usually vanishes. The norms involving $\mathcal{R}_{\varepsilon}$
account for the non-stationary coefficients and boundary layer
effect. Thus, we now focus on the convergence rate of the limits
$\norm{\mathcal{B}_\varepsilon}_{\mathfrak{V} \to \mathfrak{H}}, \norm{\mathcal{B}_\varepsilon}_{\mathfrak{H} \to \mathfrak{V}^*}\to
0$ with respect to $\varepsilon$, which accounts for the interaction
of the differential structure, the scaling, and the spatial dimensions
of the multiscale problem.

Notice that \cref{thm:rate} does not use any specific structures of
multiscale analysis, however, estimating the above norms for an
arbitrary pair of operators $\mathcal{M}_{\varepsilon}$ and
$\mathcal{M}_0$ is usually a highly nontrivial task. To make the
estimation manageable, we will introduce additional hypotheses in
\cref{sec:point-free-quant} below.

\begin{remark}
\label{rem:rela-accr}
Since \cref{thm:rate} estimates the difference of two resolvents, the
accretivity assumption in \ref{H:mate-prop} can be relaxed to maximal
accretivity, that is, there exists a constant $c \in \CC$ such that
$\mathcal{M}_{\varepsilon} + cI$ and $\mathcal{M}_0 + cI$ are
accretive, see \cite[Lemma
2.6.1]{daviesSpectralTheoryDifferential1995}. The same argument
applies for all the theorems below.
\end{remark}

\subsection{Point-free Quantitative Homogenization}
\label{sec:point-free-quant}
 We now introduce an additional structural
hypothesis that arises naturally from multiscale analysis.

\begin{enumerate}[label=(H{\arabic*}),ref=\textnormal{(H{\arabic*})},series=homo]
\item\label{H:scal-stru} \emph{The Scaling Structure:} Let $\left\{
    \mathcal{S}_{\varepsilon} \right\}_{\varepsilon > 0}$ be a family
  of unitary transformation on $\mathfrak{H}$ (i.e., $
    \mathcal{S}_{\varepsilon}^{*} \mathcal{S}_{\varepsilon} =
  \mathcal{S}_{\varepsilon}  \mathcal{S}_{\varepsilon}^{*}=I$) such that there exist
  closed, densely defined, skew-adjoint operators $\mathcal{J}_{\macro}$ and
  $\mathcal{J}_{\micro}$ on $\mathfrak{H}$ satisfying 
  \begin{align}
  \label{eq:3.0}
    [\mathcal{J}_{\macro}, \mathcal{J}_{\micro}]
    = 0,
  \end{align}
  and
  \begin{align}
  \label{eq:3.1}
    \mathcal{S}_{\varepsilon}^{-1} \mathcal{J} \mathcal{S}_{\varepsilon}
   = \mathcal{J}_{\macro} + \frac{1}{\varepsilon} \mathcal{J}_{\micro}.
  \end{align}
Furthermore, we assume that $\mathcal{S}_\varepsilon$ acts as the identity on $\ker(\mathcal{J}_{\micro})$.
\end{enumerate}
For concreteness, if a periodic homogenization problem is posed on
$\RR^d \times \TT^d$, then we can choose
\begin{align}\label{eq:135}
  (\mathcal{S}_\varepsilon u)(x, y) \coloneqq u\left(x,~ \left( y +
  \frac{x}{\varepsilon} \right) \text{mod~} 1\right), \qquad \text{
  for } u \in \mathfrak{H}.
\end{align}
This formula can be extended naturally for stochastic setting by using
the group action on the probability space, see
\cref{sec:application}. Note that the scaling operator
$\mathcal{S}_{\varepsilon}$ is similar to the unfolding operator
\cite{cioranescuPeriodicUnfoldingMethod2018,cioranescuPeriodicUnfoldingMethod2008}. The distinction is that, by keeping the same number of
arguments on both sides of \eqref{eq:135}, $\mathcal{S}_{\varepsilon}$
is isometric, while the unfolding operator is not (note that due to
the incomplete cells overlapping with the domain boundary, the
unfolding is almost isometric).

Define the gradient space and the kernel of the microscopic operator
(by abusing notation, we also use the notation for the operators to indicate their ranges):
\begin{itemize}
\item $\Pi_{\grad}$: The orthogonal projection onto $\overline{\text{ran}(\mathcal{J}_{\micro})}$.

\item $\mathcal{P}_0$: The orthogonal projection onto $\ker(\mathcal{J}_{\micro})$.

\item $\Pi_0$: The orthogonal projection onto the subspace 
\begin{align*}
\left\{  u \in \mathcal{P}_0 \colon \mathcal{J}_{\macro} u \in \mathcal{P}_0 \right\}.
\end{align*}

\item $\Pi_{\fluct}$: The orthogonal complement of $\Pi_0$ inside $\mathcal{P}_0$. Thus, $\Pi_{\fluct} = \mathcal{P}_0 - \Pi_0$.
\end{itemize}

This gives us the orthogonal decompositions: 
\begin{align}
  \label{eq:72}
I = \mathcal{P}_0  \oplus \Pi_{\grad} = \Pi_0 \oplus \Pi_{\fluct} \oplus \Pi_{\grad}.
\end{align}

By definition, $\mathcal{J}_{\macro}$ maps $\Pi_0$ into $\mathcal{P}_0$. Therefore, it has no component in $\Pi_{\grad}$:
\begin{align}
\label{eq:62}
\Pi_{\grad} \mathcal{J}_{\macro} \Pi_0 = 0.
\end{align}
By taking the adjoint of the equation, we get 
\begin{align}
\label{eq:139}
\Pi_0 \mathcal{J}_{\macro} \Pi_{\grad} = 0.
\end{align}

Because $\Pi_{\fluct}$ is the part of $\mathcal{P}_0$ that is
\emph{not} invariant under $\mathcal{J}_{\macro}$, it turns
out that $\mathcal{J}_{\macro}$ maps $\Pi_{\fluct}$
entirely into the gradient space $\Pi_{\grad}$.  Abstractly, this
means $\mathcal{J}_{\macro} \Pi_{\fluct}$ has \emph{no} component
in $\mathcal{P}_0$. Since $\Pi_0$ and $\Pi_{\fluct}$ are both
inside $\mathcal{P}_0$, this gives us two identities:
\begin{align}
\label{eq:65}
\Pi_0 \mathcal{J}_{\macro} \Pi_{\fluct} = 0
\end{align}
and 
\begin{align}
\label{eq:66}
\Pi_{\fluct} \mathcal{J}_{\macro} \Pi_{\fluct} = 0.
\end{align}

We now study the interactions between the scaling
$\mathcal{S}_{\varepsilon}$ and the subspaces. The last assumption in
\ref{H:scal-stru} says that the operator
$\mathcal{P}_0 = \Pi_0 + \Pi_{\fluct}$ is invariant under
$\mathcal{S}_{\varepsilon}$ and
$\mathcal{S}_{\varepsilon}^{-1} = \mathcal{S}_{\varepsilon}^{*}$, that
is
\begin{align}
\label{eq:6}
\mathcal{P}_0 = \mathcal{S}_{\varepsilon} \mathcal{P}_0 =
  \mathcal{S}_{\varepsilon}^{-1}\mathcal{P}_0 = \mathcal{S}_{\varepsilon}^{*}\mathcal{P}_0.
\end{align}
Since $\mathcal{P}_0$ is an orthogonal projection and
$\mathcal{S}_{\varepsilon}^{**} = \mathcal{S}_{\varepsilon}$,
\begin{align}
\label{eq:25}
\mathcal{P}_0 = \mathcal{P}_0^{*} = \left(
  \mathcal{S}_{\varepsilon}^{*}\mathcal{P}_0 \right)^{*} =
  \mathcal{P}_0 \mathcal{S}_{\varepsilon}.
\end{align}
By the decomposition \eqref{eq:72}, $\Pi_0 \mathcal{P}_0 = \Pi_0,$
thus 
\begin{align}
\label{eq:26}
\Pi_0 \mathcal{S}_{\varepsilon} = (\Pi_0 \mathcal{P}_0)
  \mathcal{S}_{\varepsilon} = \Pi_0 (\mathcal{P}_0
  \mathcal{S}_{\varepsilon}) = \Pi_0 \mathcal{P}_0 = \Pi_0.
\end{align}
Using the self-adjointness of orthogonal projection, we obtain 
\begin{align}
\label{eq:5}
  \Pi_0
  = \Pi_0^{*} = \left( \Pi_0 \mathcal{S}_{\varepsilon} \right)^{*}
  = \mathcal{S}_{\varepsilon}^{-1} \Pi_0.
\end{align}
Identities \eqref{eq:26} and \eqref{eq:5} show that macroscopic
operators are invariant under microscopic scaling, which agrees with
our intuition on homogenization. By the decomposition \eqref{eq:72},
$\Pi_{\fluct} \mathcal{P}_0 = \Pi_{\fluct},$ thus we also have 
\begin{align}
\label{eq:122}
\Pi_{\fluct} \mathcal{S}_{\varepsilon} = \Pi_{\fluct} = \mathcal{S}_{\varepsilon}^{-1} \Pi_{\fluct}.
\end{align}

We now show that 
\begin{align}
\label{eq:138}
[\mathcal{J}_{\macro}, \Pi_0] = 0,
\end{align}
which says that if we take a purely macroscopic function and apply the
macroscopic derivative to it, the result remains entirely
macroscopic. On the one hand,
\begin{align}
\label{eq:140}
  \begin{split}
    \mathcal{J}_{\text{macro}} \Pi_0
    &= I \mathcal{J}_{\text{macro}} \Pi_0 = (\Pi_0 +
      \Pi_{\text{fluct}} + \Pi_{\text{grad}})
      \mathcal{J}_{\text{macro}} \Pi_0\\
    &= \Pi_0 \mathcal{J}_{\text{macro}} \Pi_0 + \Pi_{\text{fluct}}
      \mathcal{J}_{\text{macro}} \Pi_0 + \Pi_{\text{grad}}
      \mathcal{J}_{\text{macro}} \Pi_0\\
    &= \Pi_0 \mathcal{J}_{\text{macro}} \Pi_0,
  \end{split}
\end{align}
where we used \eqref{eq:65} and \eqref{eq:62}. On the other hand, 
\begin{align}
\label{eq:141}
  \begin{split}
    \Pi_0 \mathcal{J}_{\text{macro}}
    &= \Pi_0 \mathcal{J}_{\text{macro}} I = \Pi_0
      \mathcal{J}_{\text{macro}} (\Pi_0 + \Pi_{\text{fluct}} +
      \Pi_{\text{grad}})\\
   &= \Pi_0 \mathcal{J}_{\text{macro}} \Pi_0 + \Pi_0
     \mathcal{J}_{\text{macro}} \Pi_{\text{fluct}} + \Pi_0
     \mathcal{J}_{\text{macro}} \Pi_{\text{grad}}\\
    &=\Pi_0 \mathcal{J}_{\text{macro}} \Pi_0,
\end{split}
\end{align}
where we used \eqref{eq:65} and \eqref{eq:139}. The commutativity
\eqref{eq:138} follows from \eqref{eq:140} and \eqref{eq:141}.

By definition, $\Pi_0 \subset \mathcal{P}_0 =
\ker(\mathcal{J}_{\text{micro}})$, so $ \mathcal{J}_{\text{micro}}
\Pi_0 = 0 $. Taking the adjoint, we also get $\Pi_0
\mathcal{J}_{\text{micro}} = 0$. Thus, 
\begin{align}
\label{eq:113}
[\mathcal{J}_{\micro}, \Pi_0] = 0.
\end{align}

Also, by definition, $\mathcal{J}_{\macro} \Pi_0 \subset \mathcal{P}_0$, so
$\mathcal{P}_0 \mathcal{J}_{\macro} \Pi_0 = \mathcal{J}_{\macro} \Pi_0$. From
\eqref{eq:3.1}, \eqref{eq:5}, and \eqref{eq:6}, we obtain 
\begin{align}
\label{eq:111}
  \begin{split}
    \mathcal{J} \Pi_0
    &= \mathcal{S}_{\varepsilon}\left( \mathcal{J}_{\macro} +
      \frac{1}{\varepsilon} \mathcal{J}_{\micro} \right)
      \mathcal{S}_{\varepsilon}^{-1} \Pi_0\\
    &= \mathcal{S}_{\varepsilon}\left( \mathcal{J}_{\macro} +
      \frac{1}{\varepsilon} \mathcal{J}_{\micro} \right) \Pi_0\\
    &= \mathcal{S}_{\varepsilon} \mathcal{J}_{\macro} \Pi_0
      = \mathcal{S}_{\varepsilon} \mathcal{P}_0 \mathcal{J}_{\macro}
      \Pi_0\\
      &= \mathcal{P}_0 \mathcal{J}_{\macro}
      \Pi_0
      = \mathcal{J}_{\macro}
      \Pi_0.
  \end{split}
\end{align}
Similarly, we have 
\begin{align}
\label{eq:114}
  \Pi_0 \mathcal{J}
  = \Pi_0 \mathcal{J}_{\macro}.
\end{align}
From \eqref{eq:138}, \eqref{eq:111}, and \eqref{eq:114}, we conclude 
\begin{align}
\label{eq:115}
[\mathcal{J}, \Pi_0] = 0.
\end{align}

\begin{remark}[Intuition of the Subspaces]
\label{rem:intu-subs}
The microscopic kernel $\mathcal{P}_0$, by definition, is the space of
functions that are annihilated by the microscopic derivative
$\mathcal{J}_{\micro}$. Physically, these are functions that do
not depend on the fast/microscopic variable $y = x/\varepsilon$. They
are purely macroscopic fields, $u(x)$.  The scaling operator
$\mathcal{S}_\varepsilon$ (zooming in) couples the macro and micro
scales, see example \eqref{eq:135}. If a function $u(x)$ has no
$y$-dependence (i.e., it lives in $\mathcal{P}_0$), then shifting or
scaling the $y$-variable does nothing to it. This is why
$\mathcal{S}_\varepsilon$ acts as the identity on $\mathcal{P}_0$.

The pure macroscopic space $\Pi_0$ contains macroscopic functions $u$
such that their macroscopic derivatives $\mathcal{J}_{\macro} u$
remain purely macroscopic. They do not excite any microscopic
oscillations.

The fluctuation source space $\Pi_{\fluct}$ includes functions not in
$\Pi_0$ but are still macroscopic functions (they do not have
intrinsic $y$-dependence, hence they are invariant under
$\mathcal{S}_\varepsilon$). However, when we apply the macroscopic
derivative $\mathcal{J}_{\macro}$ to them, they generate a gradient
that belongs to the microscopic space $\Pi_{\grad}$. In standard
homogenization theory, this corresponds exactly to the right-hand side
of the cell problem. The cell problem is driven by $\nabla_x
u_0(x)$. The gradient $\nabla_x u_0(x)$ is a macroscopic quantity
(invariant under microscopic scaling), but it acts as the source that
generates the highly oscillatory corrector
$\varepsilon \chi(x/\varepsilon) \nabla_x u_0(x)$. Thus,
\eqref{eq:122} is correct because $\Pi_{\fluct}$ contains the
\emph{macroscopic drivers} of fluctuation, not the microscopic
fluctuations themselves. Because they are macroscopic, they are
entirely blind to the microscopic scaling operator
$\mathcal{S}_\varepsilon$.

In the context of homogenization, by the chain rule, the total derivative
$\nabla$ (corresponding to $\mathcal{J}$) is decomposed into
$\nabla_x$ and $\nabla_y$ (corresponding to $\mathcal{J}_{\macro}$ and
$\mathcal{J}_{\micro}$, respectively) by
$\nabla \mapsto \nabla_x + \frac{1}{\varepsilon} \nabla_y$. This
motivates \eqref{eq:3.1} in \ref{H:scal-stru}. For a purely
macroscopic function $u_0 \in \Pi_0$, \eqref{eq:111} says that
$\mathcal{J} u_0 = \mathcal{J}_{\macro} u_0$, which agrees with the
usual identity $\nabla u_0 = \nabla_x u_0$ in homogenization.
\end{remark}

To forge a connection between $\mathcal{M}_{\varepsilon}$ and
$\mathcal{M}_0$ in the context of homogenization, we will
introduce an abstract corrector in \cref{thm:exis-corr} below. To set
the stage, let $\mathcal{M}$ be an operator on $\mathfrak{H}$ such
that $\mathcal{M}$ bounded and uniformly accretive: there exist
$M_{\square} > 0$ and $m_{\square} \ge 0$  such that
  \begin{align}
  \label{eq:17.1}
   \norm{\mathcal{M}}_{\mathfrak{H} \to \mathfrak{H}} \le M_{\square}
    \qquad \text{ and } \qquad
    \langle \mathcal{M} u, u \rangle \ge m_{\square} \|u\|^2,
  \end{align}
  for all $u \in \mathcal{H}$. Note that we do not require
  $m_{\square}$ to be strictly greater than 0, see the explanation
  after \cref{def:oper-a}.

\begin{definition}
\label{def:effe-fine}
Define
\begin{itemize}
 \item The Homogenized Coefficient
   \begin{align}
   \label{eq:5.1}
     \mathcal{M}_0
     \coloneqq   \Pi_0 \mathcal{M} \Pi_0 - \Pi_0 \mathcal{M}
     \Pi_{\fluct} (\Pi_{\fluct} \mathcal{M}
     \Pi_{\fluct} + \alpha I)^{-1} \Pi_{\fluct} \mathcal{M} \Pi_0.
   \end{align}
 \item The Oscillatory Material Coefficient
 \begin{align}
 \label{eq:23.1}
   \mathcal{M}_\varepsilon
   \coloneqq \mathcal{S}_\varepsilon \mathcal{M} \mathcal{S}_\varepsilon^{-1}.
 \end{align}
\end{itemize}
\end{definition}

To motivate the definitions of $\mathcal{M}_{\varepsilon}$ and
$\mathcal{M}_0$, we perform the formal asymptotic expansion
for~\eqref{eq:71} in our operator setting with $f\in \Pi_0$, a
macroscopic source. Note that from \eqref{eq:9}, \eqref{eq:3.1}, and
\eqref{eq:23.1},
\begin{align*}
\mathcal{S}_{\varepsilon}^{-1} \mathcal{A}_{\varepsilon} \mathcal{S}_{\varepsilon}
   = \mathcal{J}_{\macro} + \frac{1}{\varepsilon}
  \mathcal{J}_{\micro} +\mathcal{M} + \alpha I.
\end{align*}
This and \eqref{eq:6} allow us to write \eqref{eq:71} as
\begin{align}
\label{eq:74}
\left( \mathcal{J}_{\macro} + \frac{1}{\varepsilon}\mathcal{J}_{\micro} + \mathcal{M} + \alpha I\right) v_\varepsilon = f
\end{align}
where
$v_\varepsilon \coloneqq \mathcal{S}_{\varepsilon}^{-1}
u_{\varepsilon} $ and
$f = \mathcal{S}_{\varepsilon}^{-1} f $ since
$f$ is a macroscopic function (does not depend on $\varepsilon$). We
expand the solution $v_{\varepsilon}$ in powers of $\varepsilon$
\begin{align*}
v_\varepsilon(x) =
   v_0(x, x/\varepsilon) + \varepsilon v_1(x, x/\varepsilon) + \cdots
\end{align*}
Substitute this into \eqref{eq:74} and match powers of $\varepsilon$.
\begin{enumerate}[wide]
\item
\emph{Order $\mathcal{O}(\varepsilon^{-1})$:}
\begin{align}
\label{eq:75}
\mathcal{J}_{\micro} v_0 = 0.
\end{align}
This tells us that
$v_0 \in \ker(\mathcal{J}_{\micro}) = \mathcal{P}_0$. From
\eqref{eq:72}, we can write the leading order solution as:
\begin{align}
\label{eq:76}
v_0 =
\Pi_0 v_0 + \Pi_{\fluct} v_0.
\end{align}
For concreteness, finding the effective equation of \eqref{eq:71}
amounts to finding an equation for $u_0 \coloneqq \Pi_0 v_0$, which is
a macroscopic function.

\item \emph{Order $\mathcal{O}(1)$:}
\begin{align}
\label{eq:77}
\mathcal{J}_{\macro} v_0 + \mathcal{J}_{\micro} v_1 +
  \mathcal{M} v_0 + \alpha v_0= f.
\end{align}
Substitute \eqref{eq:76}  into \eqref{eq:77}:
\begin{align}
\label{eq:78}
\mathcal{J}_{\macro} (\Pi_0 v_0 + \Pi_{\fluct} v_0) +
  \mathcal{J}_{\micro} v_1 + \mathcal{M} (\Pi_0 v_0 +
  \Pi_{\fluct} v_0) + \alpha v_0= f 
\end{align}
\begin{enumerate}
\item \emph{Isolate the Fluctuations:}
Apply the abstract projection $\Pi_{\fluct}$ to the entire
equation and observe that
\begin{itemize}
\item   $\Pi_{\fluct} \mathcal{J}_{\micro} v_1 = 0$, because $\Pi_{\fluct} \perp \text{ran}(\mathcal{J}_{\micro})$.

\item   $\Pi_{\fluct} \mathcal{J}_{\macro} \Pi_0 v_0 = 0$ by \eqref{eq:65}.

\item    $\Pi_{\fluct} \mathcal{J}_{\macro}
  \Pi_{\fluct} v_0 = 0$ by \eqref{eq:66}.

\item   $\Pi_{\fluct} f = 0$, since the source $f$ is macroscopic.
\end{itemize}

The differential operators in \eqref{eq:78} completely vanish. We are left with a purely algebraic equation for the abstract fluctuations:
\begin{align}
\label{eq:67}
\Pi_{\fluct} \mathcal{M} (\Pi_0 v_0 + \Pi_{\fluct} v_0) +
  \alpha \Pi_{\fluct} v_0 = 0.
\end{align}
Solving for the fluctuation component yields the abstract corrector:
\begin{align}
\label{eq:68}
\Pi_{\fluct} v_0 = - (\Pi_{\fluct} \mathcal{M}
  \Pi_{\fluct}+ \alpha I)^{-1} \Pi_{\fluct} \mathcal{M} \Pi_0 v_0.
\end{align}

\item \emph{The Macroscopic Equation:}
Apply the macroscopic projection $\Pi_0$ to the $\mathcal{O}(1)$
equation \eqref{eq:78}, and observe that $\Pi_0 \mathcal{J}_{\micro} v_1 = 0$ and
   $\Pi_0 \mathcal{J}_{\macro} \Pi_{\fluct} v_0 = 0$ (by \eqref{eq:65}).
This leaves:
\begin{align}
\label{eq:69}
 \Pi_0 \mathcal{J}_{\macro} \Pi_0 v_0 + \Pi_0 \mathcal{M} (\Pi_0
  v_0 + \Pi_{\fluct} v_0) + \alpha \Pi_0 v_0 = \Pi_0 f.
\end{align}
Substitute \eqref{eq:68} into the macroscopic equation \eqref{eq:77} into \eqref{eq:69}:
\begin{align*}
\Pi_0 \mathcal{J}_{\macro} \Pi_0 v_0 + \underbrace{\left[ \Pi_0
  \mathcal{M} \Pi_0 - \Pi_0 \mathcal{M} \Pi_{\fluct}
  (\Pi_{\fluct} \mathcal{M} \Pi_{\fluct} + \alpha I)^{-1}
  \Pi_{\fluct} \mathcal{M} \Pi_0\right]}_{:= \mathcal{M}_0}
  \Pi_0 v_0 +  \alpha \Pi_0 v_0
  = \Pi_0 f.
\end{align*}
\end{enumerate}
\end{enumerate}

We summarize the discussion above in another hypothesis, which is
actually an adaptation of \ref{H:mate-prop} to the homogenization setting:
\begin{enumerate}[resume*=homo]
\item\label{H:mate-prop-H} \emph{The Multiscale Material Coefficients:}
  Let $\mathcal{M}$ be an operator on $\mathfrak{H}$ such that there
  exist $M_{\square} > 0$ and $m_{\square} \ge 0$ satisfying
  \begin{align}
  \label{eq:17.2}
   \norm{\mathcal{M}}_{\mathfrak{H} \to \mathfrak{H}} \le M_{\square}
    \qquad \text{ and } \qquad
    \Re \langle \mathcal{M} u, u \rangle \ge m_{\square} \|u\|^2,
  \end{align}
  for all $u \in \mathfrak{H}$. Define:
\begin{itemize}
 \item The Homogenized Coefficient
   \begin{align}
   \label{eq:5.2}
     \mathcal{M}_0
     \coloneqq   \Pi_0 \mathcal{M} \Pi_0 - \Pi_0 \mathcal{M}
     \Pi_{\fluct} (\Pi_{\fluct} \mathcal{M}
     \Pi_{\fluct} + \alpha I)^{-1} \Pi_{\fluct} \mathcal{M} \Pi_0.
   \end{align}
 \item The Oscillatory Material Coefficient
 \begin{align}
 \label{eq:23.2}
   \mathcal{M}_\varepsilon
   \coloneqq \mathcal{S}_\varepsilon \mathcal{M} \mathcal{S}_\varepsilon^{-1}.
 \end{align}
\end{itemize}
\end{enumerate}
\begin{theorem}[Existence of the Abstract Corrector]~\label{thm:exis-corr}
  Assume that~\ref{H:diff-stru},~\ref{H:scal-stru}, and~\ref{H:mate-prop-H} hold. Then, the corrector operator
  \begin{align}
  \label{eq:70}
  \mathcal{K}_{\micro} \coloneqq -
    \Pi_{\fluct}(\Pi_{\fluct} \mathcal{M}
    \Pi_{\fluct} + \alpha I)^{-1} \Pi_{\fluct} \mathcal{M} \Pi_0
  \end{align}
  is a well-defined bounded operator on $\mathfrak{H}$. Moreover,
  \begin{align}
  \label{eq:73}
  \mathcal{M}_0
     = \Pi_0 \left[ \mathcal{M} \left( I +
     \mathcal{K}_{\micro} \right) \right]\Pi_0.
  \end{align}
\end{theorem}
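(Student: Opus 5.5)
The proof has two parts: first show that $(\Pi_{\fluct}\mathcal{M}\Pi_{\fluct}+\alpha I)^{-1}$ is a bounded operator on all of $\mathfrak{H}$, and then verify \eqref{eq:73} by expanding the product and using idempotence of the projections. Only \ref{H:mate-prop-H} and the orthogonality relations \eqref{eq:72} are needed; \ref{H:diff-stru} and \ref{H:scal-stru} enter only through the definition of $\Pi_0$ and $\Pi_{\fluct}$ as orthogonal projections with $\Pi_{\fluct}\Pi_0=0$.

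\textbf{Invertibility.} Set $T\coloneqq \Pi_{\fluct}\mathcal{M}\Pi_{\fluct}+\alpha I$. It is bounded with $\norm{T}\le M_{\square}+\alpha$, since $\norm{\Pi_{\fluct}}\le 1$. For $u\in\mathfrak{H}$, self-adjointness of $\Pi_{\fluct}$ gives
\begin{align*}
\Re\langle Tu,u\rangle = \Re\langle \mathcal{M}\Pi_{\fluct}u,\Pi_{\fluct}u\rangle + \alpha\norm{u}^2 \ge m_{\square}\norm{\Pi_{\fluct}u}^2+\alpha\norm{u}^2\ge \alpha\norm{u}^2.
\end{align*}
Hence $\alpha\norm{u}^2\le \norm{Tu}\norm{u}$, so $T$ is injective with closed range and $\norm{Tu}\ge\alpha\norm{u}$. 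The adjoint $T^*=\Pi_{\fluct}\mathcal{M}^*\Pi_{\fluct}+\alpha I$ satisfies the same lower bound, so $\ker T^*=\{0\}$ and the range of $T$ is dense. A closed dense range is all of $\mathfrak{H}$; this is the same Lax--Milgram type argument as in \cref{lem:inve-coer}. Therefore $T^{-1}$ exists with $\norm{T^{-1}}\le 1/\alpha$, and
\begin{align*}
\norm{\mathcal{K}_{\micro}}\le \norm{T^{-1}}\,\norm{\mathcal{M}} \le M_{\square}/\alpha .
\end{align*}

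This step is where $\alpha>0$ is genuinely needed. Since $m_{\square}$ may vanish, coercivity comes from the shift $\alpha I$ alone. The shift acts on the whole space rather than on $\ran \Pi_{\fluct}$, which is exactly why the inverse makes sense on all of $\mathfrak{H}$.

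\textbf{The identity \eqref{eq:73}.} Expand the right-hand side:
\begin{align*}
\Pi_0\mathcal{M}(I+\mathcal{K}_{\micro})\Pi_0=\Pi_0\mathcal{M}\Pi_0+\Pi_0\mathcal{M}\mathcal{K}_{\micro}\Pi_0 .
\end{align*}
Since $\Pi_0^2=\Pi_0$, we have $\mathcal{K}_{\micro}\Pi_0=\mathcal{K}_{\micro}$. The second term is therefore
\begin{align*}
\Pi_0\mathcal{M}\mathcal{K}_{\micro}\Pi_0 = -\Pi_0\mathcal{M}\Pi_{\fluct}T^{-1}\Pi_{\fluct}\mathcal{M}\Pi_0 ,
\end{align*}
which matches the second term of \eqref{eq:5.2}. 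Hence \eqref{eq:73} holds.

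As a remark, $T$ commutes with $\Pi_{\fluct}$, so $T^{-1}$ does too. Consequently $\mathcal{K}_{\micro}$ maps into $\ran\Pi_{\fluct}$, consistent with its role as the corrector in \eqref{eq:68}.
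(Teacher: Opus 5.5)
Your proof is correct, but your invertibility step follows a different route from the paper's. The paper sets $\alpha=0$ ``without loss of generality'' and restricts to the closed subspace $V_{\fluct}=\ran\Pi_{\fluct}$. There it shows that the compression $\mathcal{M}_{ff}v=\Pi_{\fluct}\mathcal{M}v$ is bounded and coercive with constant $m_{\square}$, inverts it by Lax--Milgram with $\norm{\mathcal{M}_{ff}^{-1}}\le 1/m_{\square}$, and then identifies the inverse in \eqref{eq:70} with $\mathcal{M}_{ff}^{-1}$. You instead invert $T=\Pi_{\fluct}\mathcal{M}\Pi_{\fluct}+\alpha I$ on all of $\mathfrak{H}$, with coercivity supplied by the shift alone. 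Your version has two advantages. First, it inverts exactly the operator written in \eqref{eq:70}: for $\alpha=0$ the operator $\Pi_{\fluct}\mathcal{M}\Pi_{\fluct}$ annihilates $(\ran\Pi_{\fluct})^{\perp}$, so it is not invertible on $\mathfrak{H}$ unless $\Pi_{\fluct}=I$, which is why the paper has to reinterpret the inverse on $V_{\fluct}$. Second, it covers $m_{\square}=0$, which \ref{H:mate-prop-H} explicitly allows (e.g.\ a skew-Hermitian $\mathcal{M}$). The paper's reduction only works in that case if ``$\alpha=0$'' is read as absorbing $\alpha$ into $\mathcal{M}$, so that $m_{\square}$ becomes $m_{\square}+\alpha>0$. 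Read that way, the subspace viewpoint gives the sharper constant $1/(m_{\square}+\alpha)$. You can recover it from your own closing remark: since $T$ commutes with $\Pi_{\fluct}$, for $u\in\ran\Pi_{\fluct}$ your estimate becomes $\Re\langle Tu,u\rangle\ge(m_{\square}+\alpha)\norm{u}^2$, whence $\norm{\mathcal{K}_{\micro}}\le M_{\square}/(m_{\square}+\alpha)$. The verification of \eqref{eq:73} is the same direct expansion of the definitions in both proofs.
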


\begin{remark}[The Abstract Schur Complement]
\label{rem:schu-comp}
The corrector $\mathcal{K}_{\micro}$ and the effective operator
$\mathcal{M}_0$ can be derived by putting \eqref{eq:74} into a block operator matrix via the decomposition \eqref{eq:72}, taking the
Schur complement as in \cref{rem:schur-2} and following the proof of
\cref{thm:zero-mode} below. This derivation is motivated by multiscale
numerical methods \cite{duMultiscaleModelingHomogenization2020,engquistWaveletBasedNumericalHomogenization2002,gendreTwoscaleApproximationSchur2011,chertockWaveletBasedNumericalHomogenization2005}.
\end{remark}

For $u \in \Pi_0$, define the corrector function 
\begin{align}
\label{eq:41}
w = \mathcal{K}_{\micro} u.
\end{align}
By the
definition of $\mathcal{K}_{\micro}$ in \eqref{eq:70}, $w \in
\Pi_{\fluct}$; moreover,
\begin{align*}
w = -(\Pi_{\fluct} \mathcal{M} \Pi_{\fluct} + \alpha I)^{-1} \Pi_{\fluct} \mathcal{M} u.
\end{align*}
Here we drop the front $\Pi_{\fluct}$ since $\ran
(\Pi_{\fluct} \mathcal{M} \Pi_{\fluct} + \alpha I)^{-1}
\subset \Pi_{\fluct} $, see the proof of \cref{thm:exis-corr}. Thus,
\begin{align*}
(\Pi_{\fluct} \mathcal{M} \Pi_{\fluct} + \alpha I) w = -\Pi_{\fluct} \mathcal{M} u.
\end{align*}
Because $w \in \Pi_{\fluct}$, we can drop the rightmost $\Pi_{\fluct}$ acting on $w$. Rearranging all terms to one side yields the abstract corrector equation:
\begin{align*}
\Pi_{\fluct} \mathcal{M} (u + w) + \alpha w = 0.
\end{align*}

\begin{definition}[The Abstract Corrector Problem]
  \label{def:corr-prob}
  Fix $u \in \Pi_0$. We define the abstract corrector problem as
\begin{align}
\label{eq:29}
\Pi_{\fluct} \mathcal{M} (u + w) + \alpha w = 0, \qquad \text{
  for unknown } w \in \Pi_{\fluct}.
\end{align}
\end{definition}

\begin{lemma}
\label{lem:chec-H2}
Suppose~\ref{H:diff-stru},~\ref{H:scal-stru},
and~\ref{H:mate-prop-H}. Then $\mathcal{M}_0$ and
$\mathcal{M}_{\varepsilon}$ satisfy \ref{H:mate-prop} with $m = 0$ and
$M = M_{\square}$. In particular,
\begin{align}
  \label{eq:27}
   \norm{\mathcal{M}_{\varepsilon}}_{\mathfrak{H} \to \mathfrak{H}}
  \le M_{\square}, \qquad
   \Re \langle \mathcal{M}_\varepsilon u, u \rangle \ge m_{\square} \|u\|^2,
  \end{align}
  for all $u \in \mathfrak{H}$, and
\begin{align}
  \label{eq:28}   
    \norm{\mathcal{M}_0}_{\mathfrak{H}\to \mathfrak{H}}  \le 2 M_{\square},
    \quad 
     \Re\langle \mathcal{M}_0 u, u \rangle \ge m_{\square} \|u\|^2,
  \end{align}
  for all $u \in \Pi_0$.
\end{lemma}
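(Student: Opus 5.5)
The plan is to treat $\mathcal{M}_\varepsilon$ by unitary conjugation and $\mathcal{M}_0$ by the corrector structure of \cref{thm:exis-corr}. The part on $\mathcal{M}_\varepsilon$ is immediate. Since $\mathcal{S}_\varepsilon$ is unitary by \ref{H:scal-stru}, we have $\norm{\mathcal{S}_\varepsilon\mathcal{M}\mathcal{S}_\varepsilon^{-1}}\le\norm{\mathcal{M}}\le M_\square$. Moreover, $\Re\langle\mathcal{M}_\varepsilon u,u\rangle=\Re\langle\mathcal{M}v,v\rangle$ with $v=\mathcal{S}_\varepsilon^{*}u$ and $\norm{v}=\norm{u}$. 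This gives \eqref{eq:27}, and in particular \ref{H:mate-prop} for $\mathcal{M}_\varepsilon$ with $m=0$.

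For $\mathcal{M}_0$, I will first make the inverse in \eqref{eq:5.2} precise. I regard $T\coloneqq\Pi_{\fluct}\mathcal{M}\Pi_{\fluct}+\alpha I$ on all of $\mathfrak{H}$. For every $u\in\mathfrak{H}$,
\begin{align*}
\Re\langle Tu,u\rangle=\Re\langle\mathcal{M}\Pi_{\fluct}u,\Pi_{\fluct}u\rangle+\alpha\norm{u}^2\ge\alpha\norm{u}^2 .
\end{align*}
By Lax--Milgram, $T$ is invertible with $\norm{T^{-1}}\le 1/\alpha$, and on $\Pi_{\fluct}$ the bound improves to $1/(m_\square+\alpha)$. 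Then, for $u\in\Pi_0$, I use the corrector $w=\mathcal{K}_{\micro}u\in\Pi_{\fluct}$ of \cref{def:corr-prob}. By \eqref{eq:73} and \eqref{eq:29},
\begin{align*}
\langle\mathcal{M}_0u,u\rangle=\langle\mathcal{M}(u+w),u\rangle=\langle\mathcal{M}(u+w),u+w\rangle+\alpha\norm{w}^2,
\end{align*}
where the second equality pairs \eqref{eq:29} with $w$, and $\langle\mathcal{M}(u+w),w\rangle=-\alpha\norm{w}^2$ is real. Taking real parts gives
\begin{align*}
\Re\langle\mathcal{M}_0u,u\rangle\ge m_\square\norm{u+w}^2+\alpha\norm{w}^2 .
\end{align*}
Since $u\perp w$, we have $\norm{u+w}^2=\norm{u}^2+\norm{w}^2\ge\norm{u}^2$, which yields the coercivity part of \eqref{eq:28} on $\Pi_0$. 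On $\Pi_0^\perp$ the operator $\mathcal{M}_0$ vanishes, so $\Re\langle\mathcal{M}_0u,u\rangle\ge 0$ on all of $\mathfrak{H}$; this is \ref{H:mate-prop} with $m=0$.

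For the norm bound I will write $\mathcal{M}_0=\Pi_0\mathcal{M}\Pi_0+\Pi_0\mathcal{M}\mathcal{K}_{\micro}$. The first piece has norm at most $M_\square$. For the second, it suffices to show $\norm{\mathcal{M}\mathcal{K}_{\micro}u}$, or equivalently the relevant part of $\norm{w}$, is at most $\norm{u}$ up to the factor that yields $M_\square$. The test I intend is the corrector identity above. Using $\Re\langle\mathcal{M}(u+w),u+w\rangle\ge 0$ together with $\langle\mathcal{M}(u+w),w\rangle=-\alpha\norm{w}^2$, I aim to bound the second piece by $M_\square\norm{u}$, giving $\norm{\mathcal{M}_0}\le 2M_\square$.

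\textbf{Main obstacle.} The expected sticking point is exactly this uniform corrector bound. The first thing to try is the symmetric case. If $\langle\mathcal{M}u,u\rangle$ is real, as in \eqref{eq:17.1}, then $\mathcal{M}$ is self-adjoint and $\mathcal{M}_0$ is a Schur complement, so $0\le\langle\mathcal{M}_0u,u\rangle\le\langle\mathcal{M}u,u\rangle\le M_\square\norm{u}^2$ on $\Pi_0$. Hence $\norm{\mathcal{M}_0}\le M_\square\le 2M_\square$, with the margin absorbing the $\alpha$-term. In the non-symmetric accretive case the energy argument only gives $\norm{w}\le(M_\square/\alpha)\norm{u}$ or $\norm{u+w}\le(M_\square/m_\square)\norm{u}$. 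Reaching the constant $2M_\square$ there would need the real-part structure, namely pairing with the adjoint corrector $w^{*}$ solving \eqref{eq:29} with $\mathcal{M}^{*}$. I would check this step carefully before relying on the stated constant. Indeed, the $2\times2$ example $\mathcal{M}=\begin{pmatrix}\epsilon&1\\-1&\epsilon\end{pmatrix}$ has Schur complement $\epsilon+1/(\epsilon+\alpha)$. This is unbounded as $\epsilon,\alpha\to0$ while $\norm{\mathcal{M}}$ stays about $1$, so the non-symmetric case may genuinely require such an extra assumption.
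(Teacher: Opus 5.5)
Your treatment of $\mathcal{M}_\varepsilon$ (unitary conjugation) and of the coercivity of $\mathcal{M}_0$ on $\Pi_0$ is exactly the paper's argument. For the latter you pair \eqref{eq:29} with $w$ and then use $u\perp w$. Your handling of $(\Pi_{\fluct}\mathcal{M}\Pi_{\fluct}+\alpha I)^{-1}$ with $\alpha>0$ is cleaner than the proof of \cref{thm:exis-corr}. That proof sets $\alpha=0$ and relies on a bound $1/m_{\square}$, which is vacuous when $m_{\square}=0$.

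On the norm bound, the paper's proof consists of the single sentence that it ``follows directly from \cref{thm:exis-corr}''. That theorem only gives $\norm{\mathcal{K}_{\micro}}\le M_{\square}/(m_{\square}+\alpha)$, hence $\norm{\mathcal{M}_0}\le M_{\square}+M_{\square}^2/(m_{\square}+\alpha)$, not $2M_{\square}$. Your suspicion is correct, and your $2\times 2$ example embeds as a genuine counterexample inside the framework:
\begin{itemize}
\item Work in the periodic setting of \cref{sec:application} with $d\ge 2$, so that $\Pi_{\fluct}\neq 0$.
\item Pick unit vectors $e_0\in\Pi_0$ and $e_f\in\Pi_{\fluct}$, and set $\mathcal{M}=\epsilon I+K$ with $Ku=\langle u,e_0\rangle e_f-\langle u,e_f\rangle e_0$.
\item Since $K^*=-K$, \ref{H:mate-prop-H} holds with $m_{\square}=\epsilon$ and $M_{\square}=\sqrt{1+\epsilon^2}$.
\item Directly, $\Pi_{\fluct}\mathcal{M}\Pi_0e_0=e_f$ and $(\Pi_{\fluct}\mathcal{M}\Pi_{\fluct}+\alpha I)e_f=(\epsilon+\alpha)e_f$, so $\mathcal{M}_0e_0=\bigl(\epsilon+1/(\epsilon+\alpha)\bigr)e_0$.
\item For $\epsilon=\alpha=1/10$ this gives $\norm{\mathcal{M}_0}\ge 5.1>2M_{\square}$.
\end{itemize}

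So the bound in \eqref{eq:28}, and the claim $M=M_{\square}$ for $\mathcal{M}_0$ in the lemma's first sentence, are false for non-self-adjoint $\mathcal{M}$. The defensible statements are exactly the ones you reach:
\begin{itemize}
\item $\norm{\mathcal{M}_0}\le M_{\square}$ when $\mathcal{M}$ is self-adjoint, since then $0\le\mathcal{M}_0\le\Pi_0\mathcal{M}\Pi_0$.
\item $\norm{\mathcal{M}_0}\le M_{\square}\bigl(1+M_{\square}/(m_{\square}+\alpha)\bigr)$ in general. This suffices for \cref{thm:rate} once $M$ is adjusted.
\end{itemize}

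One caveat on your symmetric case: a real quadratic form forces self-adjointness only in a complex Hilbert space. In the real space of \cref{sec:application}, \eqref{eq:17.1} does not imply symmetry, so symmetry of $a$ must be assumed explicitly there.
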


We now use our operator setting to justify the well-known mathematical
result that (classical) homogenization is a low-frequency
approximation via an algebraic proof.
\begin{theorem}[Qualitative Homogenization]~\label{thm:zero-mode}
  Suppose~\ref{H:diff-stru},~\ref{H:scal-stru},
  and~\ref{H:mate-prop-H}. Let $\mathcal{A}_0$,
  $\mathcal{A}_{\varepsilon}$ be defined as in \cref{def:oper-a}. Let
  $f \in \Pi_0$ be a purely macroscopic source. Then
\begin{align}
\label{eq:3}
\lim_{\varepsilon \to 0} \Pi_0 \left( \mathcal{A}_\varepsilon^{-1} -
  \mathcal{A}_0^{-1} \right) f
  = 0.
\end{align}
\end{theorem}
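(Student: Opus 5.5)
The plan is to conjugate away the scaling, pass to a weak limit by energy bounds, identify that limit through the Schur-complement structure of \cref{thm:exis-corr}, and then upgrade weak convergence to strong convergence with an energy identity.

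\textbf{Removing the scaling.} Set $\mathcal{L}_\varepsilon \coloneqq \mathcal{J}_{\macro} + \frac1\varepsilon \mathcal{J}_{\micro} + \mathcal{M} + \alpha I$. By \eqref{eq:3.1} and \eqref{eq:23.2}, $\mathcal{A}_\varepsilon^{-1} = \mathcal{S}_\varepsilon \mathcal{L}_\varepsilon^{-1}\mathcal{S}_\varepsilon^{-1}$. Using \eqref{eq:26}, \eqref{eq:5} and $f\in\Pi_0$, this gives
\[
\Pi_0\mathcal{A}_\varepsilon^{-1}f = \Pi_0 v_\varepsilon, \qquad v_\varepsilon \coloneqq \mathcal{L}_\varepsilon^{-1} f.
\]
On the homogenized side, $\mathcal{M}_0 = \Pi_0(\cdot)\Pi_0$ and $[\mathcal{J},\Pi_0]=0$ by \eqref{eq:115}, so $\mathcal{A}_0$ is reduced by $\Pi_0$. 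Hence $u_0\coloneqq\mathcal{A}_0^{-1}f\in\Pi_0$ is the unique solution of $\mathcal{J}_{\macro}u_0 + \mathcal{M}_0 u_0 + \alpha u_0 = f$ in $\Pi_0$. Uniqueness holds by accretivity of $\mathcal{M}_0$ on $\Pi_0$ (\cref{lem:chec-H2}) together with $\alpha>0$. The goal therefore becomes $\Pi_0 v_\varepsilon \to u_0$.

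\textbf{Weak limit.} Accretivity of $\mathcal{M}+\alpha I$ and skew-adjointness give $\Re\langle \mathcal{L}_\varepsilon v_\varepsilon, v_\varepsilon\rangle = \Re\langle(\mathcal{M}+\alpha)v_\varepsilon,v_\varepsilon\rangle$. Hence $\|v_\varepsilon\|\le \|f\|/\alpha$, and a subsequence converges weakly, $v_\varepsilon\rightharpoonup v$.

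Test the equation against $\varphi$ in a common core of $\mathcal{J}_{\macro}$ and $\mathcal{J}_{\micro}$. Such a core exists because the two operators commute, as in \eqref{eq:3.0}, read in the strong resolvent sense, so they admit a joint spectral calculus. Multiplying the tested equation by $\varepsilon$ gives $\langle v_\varepsilon,\mathcal{J}_{\micro}\varphi\rangle\to 0$. Thus $v\perp\operatorname{ran}\mathcal{J}_{\micro}$, that is, $v\in\mathcal{P}_0$.

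Next test against $\varphi\in\mathcal{P}_0\cap\dom\mathcal{J}_{\macro}$. The singular term vanishes identically, and passing to the limit yields the weak identity
\[
-\langle v,\mathcal{J}_{\macro}\varphi\rangle + \langle(\mathcal{M}+\alpha)v,\varphi\rangle = \langle f,\varphi\rangle .
\]

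\textbf{Identifying the limit.} Take $\varphi\in\Pi_{\fluct}$. By \eqref{eq:65} and \eqref{eq:66}, $\mathcal{P}_0\mathcal{J}_{\macro}\varphi$ has no $\mathcal{P}_0$ component, and since $v\in\mathcal{P}_0$ the $\mathcal{J}_{\macro}$ term drops. This gives exactly the corrector problem \eqref{eq:29}, so $\Pi_{\fluct}v=\mathcal{K}_{\micro}\Pi_0 v$. Now take $\varphi\in\Pi_0$. By \eqref{eq:138}, \eqref{eq:65} and \eqref{eq:73}, the identity becomes the weak form of $\mathcal{J}_{\macro}\Pi_0 v + \mathcal{M}_0\Pi_0 v + \alpha\Pi_0 v = f$. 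By uniqueness $\Pi_0 v = u_0$, so the whole family satisfies $v_\varepsilon\rightharpoonup v = u_0+\mathcal{K}_{\micro}u_0$ without passing to subsequences.

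\textbf{Strong convergence.} The energy identity gives $\Re\langle(\mathcal{M}+\alpha)v_\varepsilon,v_\varepsilon\rangle = \Re\langle f,v_\varepsilon\rangle \to \Re\langle f,v\rangle$. Testing the limit equation with $v$ itself, which requires $v$ in the form domain and is justified by density, shows $\Re\langle f,v\rangle = \Re\langle(\mathcal{M}+\alpha)v,v\rangle$. Expanding $\Re\langle(\mathcal{M}+\alpha)(v_\varepsilon-v),v_\varepsilon-v\rangle$ and using weak convergence in the cross terms shows it tends to $0$. Since this quantity dominates $\alpha\|v_\varepsilon-v\|^2$, we get $v_\varepsilon\to v$ strongly, and therefore $\Pi_0 v_\varepsilon\to u_0$, which is \eqref{eq:3}.

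\textbf{Main obstacle.} The delicate step is the domain bookkeeping in the second and third steps. One needs a joint core for the commuting pair, needs $\mathcal{P}_0\cap\dom\mathcal{J}_{\macro}$ to be large enough to characterize $\Pi_0$ and $\Pi_{\fluct}$ weakly, and needs to justify testing the limit equation with $v$ itself. I would handle all three through the joint spectral measure of $(\mathcal{J}_{\macro},\mathcal{J}_{\micro})$, approximating by spectral truncations that commute with $\mathcal{P}_0$.
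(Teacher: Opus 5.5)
Your proposal is correct and follows essentially the paper's route: remove $\mathcal{S}_\varepsilon$, get a weak limit in $\ker\mathcal{J}_{\micro}$ via the $\varepsilon$-multiplication trick, identify its $\Pi_0$-part through the Schur complement, and upgrade to strong convergence with an energy argument. Your separate testing on $\Pi_{\fluct}$ and $\Pi_0$ is the weak form of the paper's block inversion of $\mathcal{P}_0\mathcal{L}\mathcal{P}_0$. The one simplification is on the homogenized side: you use $[\mathcal{A}_0,\Pi_0]=0$ to see at once that $\mathcal{A}_0^{-1}f\in\Pi_0$ solves the reduced effective equation, whereas the paper runs a second $\varepsilon$-limit for $\mathcal{S}_\varepsilon^{-1}\mathcal{A}_0^{-1}f$ and compares the top-left blocks of $L^{-1}$ and $L_0^{-1}$.
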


For concreteness, suppose $\mathcal{J}$ is a first order
differential operator, whose eigenvalue in Fourier domain is
$\lambda (\xi).$ \cref{thm:zero-mode} shows that the choice
$\mathcal{M}_0$ as in \eqref{eq:5.1} allows
${\mathcal{A}_{\varepsilon}^{-1}- \mathcal{A}_0^{-1}}$ to annihilate the
$\lambda(\xi)=0$ mode (the macroscopic, zero-frequency component).

We will provide a quantitative version of \cref{thm:zero-mode} for the
full norm resolvent convergence in the entire Hilbert space
$\mathfrak{H}$. To set the stage, let $\mathcal{K}_{\micro}$ be the
abstract corrector defined in~\eqref{eq:70}.
\begin{definition}[Physical Corrector]~\label{def:phys-corr}
We define the physical corrector at scale $\varepsilon$ by 
\begin{align}
\label{eq:24}
  \mathcal{K}_\varepsilon
  \coloneqq \mathcal{S}_\varepsilon \mathcal{K}_{\micro} \mathcal{S}_\varepsilon^{-1}.
\end{align}
\end{definition}

Instead of comparing $\mathcal{A}_\varepsilon^{-1}$ to
$\mathcal{A}_0^{-1}$ as in \cref{thm:rate}, we compare it to the
\emph{corrected} homogenized resolvent
$(I + \mathcal{K}_\varepsilon)\mathcal{A}_0^{-1}$. By resolvent
identity
\begin{align*}
\mathcal{A}_\varepsilon^{-1} - (I +
  \mathcal{K}_\varepsilon)\mathcal{A}_0^{-1}
  = \mathcal{A}_\varepsilon^{-1} \left( \mathcal{A}_0 - \mathcal{A}_{\varepsilon}(I +
\mathcal{K}_\varepsilon) \right) \mathcal{A}_0^{-1}.
\end{align*}
Thus, the physical corrector $\mathcal{K}_\varepsilon$ modifies the
material difference $\mathcal{V}_{\varepsilon}$ in \eqref{eq:54} to 
\begin{definition}[Corrected Residual/Corrected Material Difference]
\label{def:corr-resi}
The \emph{corrected residual/material difference operator}
$\mathcal{V}_\varepsilon^{\corr}$ is defined as:
\begin{align}
\label{eq:30}
  \mathcal{V}_\varepsilon^{\corr}
  \coloneqq \mathcal{A}_0 -\mathcal{A}_{\varepsilon} (I +
  \mathcal{K}_\varepsilon)
  =
  \mathcal{M}_0 - \mathcal{M}_\varepsilon(I + \mathcal{K}_\varepsilon) - \mathcal{J}\mathcal{K}_\varepsilon - \alpha \mathcal{K}_\varepsilon.
\end{align}
Define the \emph{unscaled corrected residual}
$\bar{\mathcal{V}} \coloneqq \mathcal{S}_\varepsilon^{-1}
\mathcal{V}_\varepsilon^{\corr} \mathcal{S}_\varepsilon$, obtained by
mapping $\mathcal{V}_\varepsilon^{\corr}$ back to the abstract
reference space using the scaling operator $\mathcal{S}_\varepsilon$.
\end{definition}

We will discuss some properties of
$\mathcal{V}_\varepsilon^{\corr}$. Using \eqref{eq:30} and
\cref{def:phys-corr}, we obtain
\begin{align*}
\bar{\mathcal{V}} = \mathcal{S}_\varepsilon^{-1} \mathcal{M}_0 \mathcal{S}_\varepsilon - \mathcal{S}_\varepsilon^{-1} \mathcal{M}_\varepsilon \mathcal{S}_\varepsilon (I + \mathcal{K}_{\micro}) - \mathcal{S}_\varepsilon^{-1} \mathcal{J} \mathcal{S}_\varepsilon \mathcal{K}_{\micro} - \alpha \mathcal{K}_{\micro}
\end{align*}
Using \ref{H:scal-stru}, \ref{H:mate-prop-H}, \eqref{eq:26}, and
\eqref{eq:5} to simplify
\begin{align}
\label{eq:31}
  \bar{\mathcal{V}}
  = \mathcal{M}_0 - \mathcal{M}(I + \mathcal{K}_{\micro}) - \left(\mathcal{J}_{\macro} + \frac{1}{\varepsilon} \mathcal{J}_{\micro}\right)\mathcal{K}_{\micro} - \alpha \mathcal{K}_{\micro}.
\end{align}
By definition \eqref{eq:70}, the range of the corrector
$\mathcal{K}_{\micro}$ lies entirely in the fluctuation space
$\Pi_{\fluct}$. Because
$\Pi_{\fluct} \subset \mathcal{P}_0 =
\ker(\mathcal{J}_{\micro})$, it follows immediately that $
\mathcal{J}_{\micro} \mathcal{K}_{\micro} = 0$. Thus, the
singular $\frac{1}{\varepsilon}$ term in \eqref{eq:31} vanishes, we
obtain 
\begin{align}
\label{eq:31.2}
\bar{\mathcal{V}}
  = \mathcal{M}_0 - \mathcal{M}(I + \mathcal{K}_{\micro}) - \mathcal{J}_{\macro} \mathcal{K}_{\micro} - \alpha \mathcal{K}_{\micro}.
\end{align}
We now project \eqref{eq:31.2} onto the macroscopic zero-modes $\Pi_0$:
\begin{align}
\label{eq:64}
  \Pi_0 \bar{\mathcal{V}} \Pi_0
  = \Pi_0 \mathcal{M}_0 \Pi_0 - \Pi_0 \mathcal{M} \Pi_0 - \Pi_0 \mathcal{M} \mathcal{K}_{\micro} \Pi_0 - \Pi_0 \mathcal{J}_{\macro} \mathcal{K}_{\micro} \Pi_0 - \alpha \Pi_0 \mathcal{K}_{\micro} \Pi_0
\end{align}
Observe that: 
\begin{itemize}
\item $\Pi_0 \mathcal{M}_0 \Pi_0 = \mathcal{M}_0$ by \ref{H:mate-prop-H}.
\item  $\Pi_0 \mathcal{K}_{\micro} \Pi_0 = 0$, since
  $\text{ran}(\mathcal{K}_{\micro}) \subset \Pi_{\fluct}$
  and $\Pi_0 \perp \Pi_{\fluct}$, see definition \eqref{eq:70}
  and the decomposition \eqref{eq:72}.
\item  $\Pi_0 \mathcal{J}_{\macro} \mathcal{K}_{\micro}
  \Pi_0 = 0$, by \eqref{eq:65} and definition \eqref{eq:70}).
\end{itemize}
So \eqref{eq:64} becomes
\begin{align}\label{eq:110}
  \begin{split}
    \Pi_0 \bar{\mathcal{V}} \Pi_0
  &= \mathcal{M}_0 - \Pi_0 \mathcal{M} \Pi_0 - \Pi_0 \mathcal{M}
    \Pi_{\fluct} \mathcal{K}_{\micro} \Pi_0\\
  &= \mathcal{M}_0 - \Pi_0 \mathcal{M} \left( I +
    \Pi_{\fluct} \mathcal{K}_{\micro}  \right)\Pi_0\\
  &=\mathcal{M}_0 - \Pi_0 \mathcal{M} \left( I +
    \mathcal{K}_{\micro}  \right)\Pi_0\\
  &=0,
  \end{split}
\end{align}
where we use \cref{thm:exis-corr} in the last two identities. This
implies the top-left block matrix decomposition of
$\mathcal{V}_{\varepsilon}^{\corr}$ vanishes:
\begin{align}\label{eq:120}
  \Pi_0 \mathcal{V}_\varepsilon^{\corr} \Pi_0
  = \Pi_0 \big( \mathcal{S}_\varepsilon \bar{\mathcal{V}}
  \mathcal{S}_\varepsilon^{-1} \big) \Pi_0
   = (\Pi_0 \mathcal{S}_\varepsilon) \bar{\mathcal{V}} (\mathcal{S}_\varepsilon^{-1} \Pi_0) = \Pi_0 \bar{\mathcal{V}} \Pi_0 = 0,
\end{align}
where we use \eqref{eq:26} and \eqref{eq:5}.

We now evaluate the bottom-left block of $\bar{\mathcal{V}}$, which means we multiply by the fluctuation projection $\Pi_{\fluct}$ on the left, and the macroscopic projection $\Pi_0$ on the right:
\begin{align}
\label{eq:123}
  \begin{split}
    \Pi_{\fluct} \bar{\mathcal{V}} \Pi_0
    &= {\Pi_{\fluct} \mathcal{M}_0 \Pi_0} - {\Pi_{\fluct} \left[ \mathcal{M}(I +
      \mathcal{K}_{\micro}) + \alpha \mathcal{K}_{\micro} \right]\Pi_0} - {\Pi_{\fluct} \mathcal{J}_{\macro}\mathcal{K}_{\micro}\Pi_0}.
  \end{split}
\end{align}
Observe that: 
\begin{itemize}
\item By \cref{thm:exis-corr}, the homogenized operator $\mathcal{M}_0
  = \Pi_0 \mathcal{M}_0$. On the other hand, by decomposition
  \eqref{eq:73}, $\Pi_{\fluct} \Pi_0 = 0$. Thus we have:
\begin{align*}
\Pi_{\fluct} \mathcal{M}_0 \Pi_0 = \Pi_{\fluct} \Pi_0 \mathcal{M}_0 \Pi_0 = 0.
\end{align*}

\item By \cref{thm:exis-corr}, $\mathcal{K}_{\micro} =
  -
    \Pi_{\fluct}(\Pi_{\fluct} \mathcal{M}
    \Pi_{\fluct} + \alpha I)^{-1} \Pi_{\fluct} \mathcal{M} \Pi_0$, thus
  \begin{align*}
    &{\Pi_{\fluct} \left[ \mathcal{M}(I +
      \mathcal{K}_{\micro}) + \alpha \mathcal{K}_{\micro}
      \right]\Pi_0}\\
    &=\Pi_{\fluct}  \mathcal{M} \Pi_0
      + \Pi_{\fluct} \left( \mathcal{M} + \alpha I
      \right) \mathcal{K}_{\micro}\Pi_0\\
    &=\Pi_{\fluct}  \mathcal{M} \Pi_0
      + \Pi_{\fluct} \left( \mathcal{M} + \alpha I
      \right)\left( -
    \Pi_{\fluct}(\Pi_{\fluct} \mathcal{M}
    \Pi_{\fluct} + \alpha I)^{-1} \Pi_{\fluct} \mathcal{M} \Pi_0
      \right)\Pi_0\\
    &=\Pi_{\fluct}  \mathcal{M} \Pi_0
      -  \left(\Pi_{\fluct} \mathcal{M}\Pi_{\fluct} + \alpha I
      \right) (\Pi_{\fluct} \mathcal{M}
    \Pi_{\fluct} + \alpha I)^{-1} \Pi_{\fluct} \mathcal{M} \Pi_0 \Pi_0\\
    &=\Pi_{\fluct}  \mathcal{M} \Pi_0 - \Pi_{\fluct} \mathcal{M} \Pi_0 \Pi_0
      \\
    &=0.
  \end{align*}

\item By \cref{thm:exis-corr}, $\mathcal{K}_{\micro} =
  \Pi_{\fluct} \mathcal{K}_{\micro}$, thus
\begin{align*}
\Pi_{\fluct} \mathcal{J}_{\macro}
  \mathcal{K}_{\micro} \Pi_0
  = \Pi_{\fluct}
  \mathcal{J}_{\macro} (\Pi_{\fluct}
  \mathcal{K}_{\micro}) \Pi_0
  = 0,
\end{align*}
here we use \eqref{eq:66}.
\end{itemize}
Therefore, \eqref{eq:123} becomes 
\begin{align}
\label{eq:124}
  \Pi_{\fluct} \bar{\mathcal{V}} \Pi_0
  =0.
\end{align}
\begin{remark}
 \label{rem:schur-2}
To gain intuition why \eqref{eq:124} is true, we consider $\mathcal{M} +
\alpha I$ as a $2 \times 2$ block matrix acting on the orthogonal
decomposition $\mathcal{P}_0 = \Pi_0 \oplus \Pi_{\fluct}$:
\begin{align}
\label{eq:125}
  \mathcal{M} + \alpha I
  = \begin{pmatrix} \Pi_0 (\mathcal{M} + \alpha I) \Pi_0 & \Pi_0 \mathcal{M} \Pi_{\fluct} \\ \Pi_{\fluct} \mathcal{M} \Pi_0 & \Pi_{\fluct} (\mathcal{M} + \alpha I) \Pi_{\fluct} \end{pmatrix}.
\end{align}
In homogenization, we want to find an effective macroscopic function. To
do this, we must eliminate the microscopic fluctuations (the
bottom-right block). In linear algebra, eliminating a block of a
matrix is done via the \emph{Schur complement/block Gaussian
  elimination}.  The corrector $\mathcal{K}_{\micro}$ is exactly
the algebraic operator that solves the bottom row of this system. By
defining $\mathcal{K}_{\micro}$ as in \cref{thm:exis-corr}, we
are explicitly forcing the off-diagonal coupling block
$\Pi_{\fluct} \bar{\mathcal{V}} \Pi_0$ to be zero. Therefore,
\eqref{eq:124} is not a coincident but the algebraic principle of
the corrector itself.
\end{remark}

Because $\mathcal{P}_0 = \Pi_0 \oplus \Pi_{\fluct}$,
\eqref{eq:110} and \eqref{eq:124} imply
$\mathcal{P}_0 \bar{\mathcal{V}} \Pi_0 = 0$.  By the orthogonal
decomposition \eqref{eq:72}, for any $ u \in \mathfrak{H}$, 
$\bar{\mathcal{V}} \Pi_0u \in \Pi_{\grad} = \overline{\ran (\mathcal{J}_{\micro})}$. There
must exist some bounded operator $\Phi_{\micro}$ such that:
\begin{align}
\label{eq:112}
\bar{\mathcal{V}} \Pi_0 \approx \mathcal{J}_{\micro} \Phi_{\micro}.
\end{align}
This operator $\Phi_{\micro}$ is the \emph{abstract flux
  corrector}, which is the anti-derivative of the residual. The
approximation $\approx$ accounts for the closure over $\ran
(\mathcal{J}_{\micro})$. To make this rigorous, we introduce
\begin{definition}[Regularized Abstract Flux Corrector]
\label{def:flux-corr}
Let $\eta \in \RR$ with $\eta \neq 0$. We define the \emph{regularized abstract flux corrector} as:
\begin{align}
\label{eq:126}
  \Phi_{\micro}^\eta
  \coloneqq (\mathcal{J}_{\micro} - \eta I)^{-1} \bar{\mathcal{V}} \Pi_0.
\end{align}
\end{definition}
Because $\mathcal{J}_{\micro}$ is skew-adjoint, its spectrum
lies entirely on the imaginary axis. Therefore, the operator
$(\mathcal{J}_{\micro} - \eta I)$ is boundedly invertible, with
norm $\|(\mathcal{J}_{\micro} - \eta I)^{-1}\| \le
1/|\eta|$. Therefore, $\Phi_{\micro}^\eta$ is well-defined.
By multiplying both sides of \eqref{eq:126} by $(\mathcal{J}_{\micro} - \eta I)$, we obtain the \emph{exact} algebraic identity:
\begin{align}
\label{eq:127}
\bar{\mathcal{V}} \Pi_0 = \mathcal{J}_{\micro} \Phi_{\micro}^\eta - \eta \Phi_{\micro}^\eta,
\end{align}
which is the rigorous version of \eqref{eq:112}.

We adapt~\ref{H:comm-equa} to the context of
homogenization as follows:
\begin{enumerate}[resume*=homo]
\item\label{H:comm-equa-H}\emph{The Multiscale Commutator Equation:}
  For $\varepsilon > 0$ and regularizer $\delta > 0$, there exist
  operators $\mathcal{B}_\varepsilon^{\corr}$ and
  $\mathcal{R}_\varepsilon^{\corr}$ that are bounded from
  $\mathfrak{V} \to \mathfrak{H}$ and from
  $\mathfrak{H} \to \mathfrak{V}^*$, such that the following
  commutator equation holds in $\mathfrak{V}^*$ for all
  $u \in \mathfrak{V}$:
   \begin{align}
   \label{eq:11b}
   \mathcal{V}_\varepsilon^{\corr}\Pi_0 = [\mathcal{J},
     \mathcal{B}_{\varepsilon}^{\corr}] + \delta
     \mathcal{B}_{\varepsilon}^{\corr} + \mathcal{R}_{\varepsilon}^{\corr}.
   \end{align}
 \end{enumerate}
 Because the source term in a multiscale problem is usually
 macroscopic, we added the projection $\Pi_0$ to the left hand side of
 \eqref{eq:11b}. We aim to solve the generalized Sylvester equation
 \eqref{eq:11b} by using the regularized abstract flux corrector
 defined in \cref{def:flux-corr}. Since
 $\mathcal{V}_\varepsilon^{\corr}\Pi_0 = 0$ on
 $\Pi_{\perp} = I-\Pi_0$, we choose
\begin{align}
\label{eq:121}
\mathcal{B}_{\varepsilon}^{\corr} = \mathcal{R}_{\varepsilon}^{\corr}
  = 0 \qquad \text{ on } \Pi_{\perp}.
\end{align}
We  define the scaled regularized flux corrector as 
\begin{align}
\label{eq:128}
\Phi_\varepsilon^\eta \coloneqq \mathcal{S}_\varepsilon \Phi_{\micro}^\eta \mathcal{S}_\varepsilon^{-1}.
\end{align}
Choose 
\begin{align}
\label{eq:130}
  \mathcal{B}_\varepsilon^{\corr}
  =
  \begin{cases}
    \varepsilon
    \Phi_\varepsilon^\eta \quad &\text{ on } \Pi_0\\
    0 \quad &\text{ on } \Pi_{\perp}
  \end{cases}
  = \varepsilon
    \Phi_\varepsilon^\eta \Pi_0.
\end{align}
On the one hand, conjugating \eqref{eq:127} by the scaling operator
$\mathcal{S}_\varepsilon$, and using \eqref{eq:128}, \eqref{eq:26}, \eqref{eq:5}, and \ref{H:scal-stru} , we get:
\begin{align}
\label{eq:129}
  \begin{split}
    \mathcal{V}_\varepsilon^{\corr} \Pi_0
    &= \left( \mathcal{S}_\varepsilon \mathcal{J}_{\micro}
      \mathcal{S}_\varepsilon^{-1} \right) \Phi_\varepsilon^\eta \Pi_0
      - \eta \Phi_\varepsilon^\eta \Pi_0\\
    &=  \varepsilon \mathcal{J} \Phi_\varepsilon^\eta \Pi_0 - \varepsilon \mathcal{S}_\varepsilon \mathcal{J}_{\macro} \mathcal{S}_\varepsilon^{-1} \Phi_\varepsilon^\eta \Pi_0 - \eta \Phi_\varepsilon^\eta \Pi_0 .
  \end{split}
\end{align}
On the other hand, the right-hand side of \eqref{eq:11b} becomes 
\begin{align}\label{eq:132}
  \begin{split}
    [\mathcal{J},
     \mathcal{B}_{\varepsilon}^{\corr}] + \delta
     \mathcal{B}_{\varepsilon}^{\corr} +
  \mathcal{R}_{\varepsilon}^{\corr}
  &=[\mathcal{J}, \varepsilon \Phi_\varepsilon^\eta \Pi_0]  + \delta
  (\varepsilon \Phi_\varepsilon^\eta\Pi_0) + \mathcal{R}_{\varepsilon}^{\corr}\\
  &= \varepsilon \mathcal{J} \Phi_\varepsilon^\eta \Pi_0 - \varepsilon
  \Phi_\varepsilon^\eta \Pi_0 \mathcal{J}  + \delta \varepsilon \Phi_\varepsilon^\eta \Pi_0 + \mathcal{R}_{\varepsilon}^{\corr}.
  \end{split}
\end{align}
From \eqref{eq:11b}, \eqref{eq:129}, and \eqref{eq:132}, we conclude 
\begin{align}
\label{eq:133}
  \begin{split}
    \varepsilon \mathcal{J} \Phi_\varepsilon^\eta \Pi_0 - \varepsilon \mathcal{S}_\varepsilon \mathcal{J}_{\macro} \mathcal{S}_\varepsilon^{-1} \Phi_\varepsilon^\eta \Pi_0 - \eta \Phi_\varepsilon^\eta \Pi_0 = \varepsilon \mathcal{J} \Phi_\varepsilon^\eta \Pi_0 - \varepsilon \Phi_\varepsilon^\eta \Pi_0 \mathcal{J} + \delta \varepsilon \Phi_\varepsilon^\eta \Pi_0 + \mathcal{R}_\varepsilon^{\corr}
  \end{split}
\end{align}
Solving for $\mathcal{R}_\varepsilon^{\corr}$, we get
\begin{align*}
  \mathcal{R}_\varepsilon^{\corr}
  = \varepsilon \Phi_\varepsilon^\eta \Pi_0 \mathcal{J} - \varepsilon \mathcal{S}_\varepsilon \mathcal{J}_{\macro} \mathcal{S}_\varepsilon^{-1} \Phi_\varepsilon^\eta \Pi_0 - (\eta + \delta \varepsilon) \Phi_\varepsilon^\eta \Pi_0.
\end{align*}
To eliminate the zeroth-order mass term, we choose the regularizer
$\eta = -\delta \varepsilon$, then
\begin{align}
\label{eq:134}
\mathcal{R}_\varepsilon^{\corr} = \varepsilon \Phi_\varepsilon^\eta \Pi_0 \mathcal{J} - \varepsilon \mathcal{S}_\varepsilon \mathcal{J}_{\macro} \mathcal{S}_\varepsilon^{-1} \Phi_\varepsilon^\eta \Pi_0.
\end{align}
Using \eqref{eq:26}, \eqref{eq:5}, and \cref{def:flux-corr}, we evaluate the term 
\begin{align}
\label{eq:136}
  \begin{split}
    \mathcal{S}_\varepsilon \mathcal{J}_{\macro}
    \mathcal{S}_\varepsilon^{-1} \Phi_\varepsilon^\eta \Pi_0
    &= \mathcal{S}_\varepsilon \left( \mathcal{J}_{\macro}
      \Phi_{\micro}^\eta \Pi_0 \right)
      \mathcal{S}_\varepsilon^{-1}\\
    &= \mathcal{S}_\varepsilon \mathcal{J}_{\macro}(\mathcal{J}_{\micro} - \eta I)^{-1} \bar{\mathcal{V}} \Pi_0 \mathcal{S}_\varepsilon^{-1}
  \end{split}
\end{align}
From \ref{H:scal-stru}, we have
$[\mathcal{J}_{\macro}, \mathcal{J}_{\micro}] = 0$, which
implies $\mathcal{J}_{\macro}$ commutes with the resolvent
$(\mathcal{J}_{\micro} - \eta I)^{-1}$. Together with
\eqref{eq:140} and \eqref{eq:111}, we obtain
\begin{align}
\label{eq:137}
  \begin{split}
    \mathcal{J}_{\macro}(\mathcal{J}_{\micro} - \eta
    I)^{-1} \bar{\mathcal{V}} \Pi_0
    &= (\mathcal{J}_{\micro} - \eta I)^{-1}
      \mathcal{J}_{\macro} \bar{\mathcal{V}} \Pi_0\\
    &= (\mathcal{J}_{\micro} - \eta I)^{-1} \left(
      \bar{\mathcal{V}} \mathcal{J}_{\macro} \Pi_0 +
      [\mathcal{J}_{\macro}, \bar{\mathcal{V}}] \Pi_0  \right)\\
    &= (\mathcal{J}_{\micro} - \eta I)^{-1} \bar{\mathcal{V}}
      \Pi_0 \mathcal{J}_{\macro} \Pi_0 +
      (\mathcal{J}_{\micro} - \eta I)^{-1}
      [\mathcal{J}_{\macro}, \bar{\mathcal{V}}] \Pi_0\\
    &= \Phi_{\micro}^\eta \Pi_0 \mathcal{J} \Pi_0 + (\mathcal{J}_{\micro} - \eta I)^{-1} [\mathcal{J}_{\macro}, \bar{\mathcal{V}}] \Pi_0.
  \end{split}
\end{align}
Using \eqref{eq:136}, \eqref{eq:137}, and \eqref{eq:138} to simplify \eqref{eq:134}: 
\begin{align}
\label{eq:142}
  \begin{split}
    \mathcal{R}_\varepsilon^{\corr}
    &= \varepsilon \Phi_\varepsilon^\eta \Pi_0
    \mathcal{J} - \varepsilon \mathcal{S}_\varepsilon \left[
    \Phi_{\micro}^\eta \Pi_0 \mathcal{J} \Pi_0 +
    (\mathcal{J}_{\micro} - \eta I)^{-1}
    [\mathcal{J}_{\macro}, \bar{\mathcal{V}}] \Pi_0 \right]
    \mathcal{S}_{\varepsilon}^{-1}\\
    &=\varepsilon \Phi_\varepsilon^\eta \Pi_0
    \mathcal{J} - \varepsilon \mathcal{S}_\varepsilon 
    \Phi_{\micro}^\eta \mathcal{S}_{\varepsilon}^{-1} \mathcal{S}_{\varepsilon}\Pi_0 \mathcal{J} \Pi_0 \mathcal{S}_{\varepsilon}^{-1} +
    \varepsilon\mathcal{S}_\varepsilon (\mathcal{J}_{\micro} - \eta I)^{-1}
    [\mathcal{J}_{\macro}, \bar{\mathcal{V}}] \Pi_0 
      \mathcal{S}_{\varepsilon}^{-1}\\
    &=\varepsilon \Phi_\varepsilon^\eta \Pi_0
    \mathcal{J} - \varepsilon 
    \Phi_{\varepsilon}^\eta \Pi_0 \mathcal{J} \Pi_0 +
    \varepsilon\mathcal{S}_\varepsilon (\mathcal{J}_{\micro} - \eta I)^{-1}
    [\mathcal{J}_{\macro}, \bar{\mathcal{V}}] \Pi_0 
      \mathcal{S}_{\varepsilon}^{-1}\\
    &= \varepsilon \Phi_\varepsilon^\eta (\Pi_0 \mathcal{J} - \Pi_0
      \mathcal{J} \Pi_0) - \varepsilon \mathcal{S}_\varepsilon
      (\mathcal{J}_{\micro} - \eta I)^{-1}
      [\mathcal{J}_{\macro}, \bar{\mathcal{V}}] \Pi_0
      \mathcal{S}_\varepsilon^{-1}\\
    &= {\varepsilon \Phi_\varepsilon^\eta \Pi_0 [\Pi_0,
      \mathcal{J}]} -
      {\varepsilon \mathcal{S}_\varepsilon
      (\mathcal{J}_{\micro} - \eta I)^{-1}
      [\mathcal{J}_{\macro}, \bar{\mathcal{V}}] \Pi_0
      \mathcal{S}_\varepsilon^{-1}}\\
    &=-\varepsilon \mathcal{S}_\varepsilon
      (\mathcal{J}_{\micro} - \eta I)^{-1}
      [\mathcal{J}_{\macro}, \bar{\mathcal{V}}] \Pi_0
      \mathcal{S}_\varepsilon^{-1},
  \end{split}
\end{align}
where we use the projection identity $\Pi_0 \mathcal{J} - \Pi_0
\mathcal{J} \Pi_0 = \Pi_0 [\Pi_0, \mathcal{J}]$.

\begin{remark}[Stationary vs. Non-stationary Homogenization]
\label{rem:nons-homo}
The derivation of remainder $\mathcal{R}_{\varepsilon}^{\corr}$ isolates the
commutator $[\mathcal{J}_{\macro}, \bar{\mathcal{V}}]$. If the
material is purely stationary (that is, the coefficients only depend
on fast variable such as
$a_{\varepsilon}(x, \omega) = a \left( \frac{x}{\varepsilon}, \omega \right)$
in stochastic homogenization), this commutator is zero, and thus
  \begin{align}
  \label{eq:116}
  \mathcal{R}_{\varepsilon}^{\corr} = 0.
  \end{align}
  If the material has macroscopic variations (for example
  $a_{\varepsilon}(x) = a \left( x, \frac{x}{\varepsilon} \right)$),
  this commutator survives, but because it is explicitly multiplied by
  $\varepsilon$, it may not destroy the homogenization limit if the
  middle term
  $(\mathcal{J}_{\micro} - \eta I)^{-1}
  [\mathcal{J}_{\macro}, \bar{\mathcal{V}}]$ does not generate a
  singularity stronger than $\varepsilon^{-1}$. Note that the
  isometry of the scaling $\mathcal{S}_{\varepsilon}$ allows us to
  completely ignore it in estimating the norm of $\mathcal{R}_{\varepsilon}^{\corr}$.
\end{remark}

% \begin{remark}[Boundary Layer Effects] 
% \label{rem:boud-laye}

% \end{remark}

To summary, the solution of the generalized Sylvester equation
\eqref{eq:11b} is 
\begin{align}
\label{eq:143}
  \begin{split}
    \mathcal{B}_{\varepsilon}^{\corr}
    &= \varepsilon \Phi_{\varepsilon}^{\eta} \Pi_0= \varepsilon
      \mathcal{S}_\varepsilon \Phi_{\micro}^\eta
      \mathcal{S}_\varepsilon^{-1} \Pi_0 = \varepsilon
      \mathcal{S}_\varepsilon (\mathcal{J}_{\micro} - \eta I)^{-1} \bar{\mathcal{V}} \Pi_0
      \mathcal{S}_\varepsilon^{-1} \Pi_0,\\
    \mathcal{R}_{\varepsilon}^{\corr}
    &=-\varepsilon \mathcal{S}_\varepsilon
      (\mathcal{J}_{\micro} - \eta I)^{-1}
      [\mathcal{J}_{\macro}, \bar{\mathcal{V}}] \Pi_0
      \mathcal{S}_\varepsilon^{-1},
  \end{split}
\end{align}
where $\eta = -\delta\varepsilon$.
\begin{theorem}[Quantitative Homogenization on Global Space
  $\mathfrak{H}$]
\label{thm:quan-homo}
Suppose \ref{H:diff-stru}, \ref{H:scal-stru}, \ref{H:mate-prop-H}, and~\ref{H:comm-equa-H}. Then 
\begin{align}
\label{eq:131}
  \begin{split}
    &\norm{\left( \mathcal{A}_\varepsilon^{-1} - (I +
  \mathcal{K}_\varepsilon)\mathcal{A}_0^{-1} \right)\Pi_0}_{\mathfrak{H} \to
  \mathfrak{H}}\\
  &\le  C_1 \varepsilon \left( \norm{(\mathcal{J}_{\micro} - \eta I)^{-1} \bar{\mathcal{V}} \Pi_0}_{\mathfrak{V} \to
    \mathfrak{H}} + \norm{(\mathcal{J}_{\micro} - \eta I)^{-1} \bar{\mathcal{V}} \Pi_0}_{\mathfrak{H} \to
    \mathfrak{V}^{*}} \right)\\
  &\qquad +{} C_2\varepsilon \left( \norm{(\mathcal{J}_{\micro} - \eta I)^{-1}
      [\mathcal{J}_{\macro}, \bar{\mathcal{V}}] \Pi_0}_{\mathfrak{V} \to
    \mathfrak{H}} + \norm{(\mathcal{J}_{\micro} - \eta I)^{-1}
      [\mathcal{J}_{\macro}, \bar{\mathcal{V}}] \Pi_0}_{\mathfrak{H} \to
    \mathfrak{V}^{*}} \right),
  \end{split}
\end{align}
where 
\begin{align*}
   C_1
  &\coloneqq  C_{\mathfrak{V}}\left( 1 + \frac{M+\alpha}{m+\alpha} +
    \frac{\delta}{2 (m+\alpha)}\right),\\
  C_2
  &\coloneqq \frac{ C_{\mathfrak{V}}}{2(m+\alpha)},\\
  \eta
  &\coloneqq -\delta \varepsilon,\\
  \bar{\mathcal{V}}
  &\coloneqq \mathcal{M}_0 - \mathcal{M}(I + \mathcal{K}_{\micro}) - \mathcal{J}_{\macro} \mathcal{K}_{\micro} - \alpha \mathcal{K}_{\micro}.
\end{align*}
\end{theorem}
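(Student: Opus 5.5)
We prove \cref{thm:quan-homo} by rerunning the argument of \cref{thm:rate}, with the corrected comparison operator $(I+\mathcal{K}_\varepsilon)\mathcal{A}_0^{-1}$ in place of $\mathcal{A}_0^{-1}$ and $\mathcal{V}^{\corr}_\varepsilon\Pi_0$ in place of $\mathcal{V}_\varepsilon\Pi_0$. We then insert the explicit solution \eqref{eq:143} of the Sylvester equation.

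\textbf{Step 1: error representation.} From \eqref{eq:30} and the resolvent identity,
\begin{align*}
\left(\mathcal{A}_\varepsilon^{-1}-(I+\mathcal{K}_\varepsilon)\mathcal{A}_0^{-1}\right)\Pi_0
=\mathcal{A}_\varepsilon^{-1}\mathcal{V}^{\corr}_\varepsilon\mathcal{A}_0^{-1}\Pi_0 .
\end{align*}
By \eqref{eq:115}, $\mathcal{J}$ commutes with $\Pi_0$. Since $\Pi_0$ also commutes with $\mathcal{M}_0$ (as $\mathcal{M}_0=\Pi_0\mathcal{M}_0\Pi_0$) and with $\alpha I$, $\Pi_0$ commutes with $\mathcal{A}_0$, hence with $\mathcal{A}_0^{-1}$. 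The right-hand side therefore equals $\mathcal{A}_\varepsilon^{-1}(\mathcal{V}^{\corr}_\varepsilon\Pi_0)\mathcal{A}_0^{-1}$. The advantage is that the term $C_3\norm{\Pi_\perp\mathcal{A}_0^{-1}}$ from \cref{thm:rate} does not appear: the projection $\Pi_0$ is already built into the commutator equation \eqref{eq:11b}, so no low/high frequency split is needed. This is why the final bound has only two groups of terms.

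\textbf{Step 2: substitute \eqref{eq:11b} and estimate the three pieces.} The estimates are exactly those from the proof of \cref{thm:rate}.

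First, for the commutator piece, write
\begin{align*}
\mathcal{A}_\varepsilon^{-1}[\mathcal{J},\mathcal{B}]\mathcal{A}_0^{-1}
=\mathcal{A}_\varepsilon^{-1}\big[(\mathcal{A}_\varepsilon-\mathcal{M}_\varepsilon-\alpha)\mathcal{B}-\mathcal{B}(\mathcal{A}_0-\mathcal{M}_0-\alpha)\big]\mathcal{A}_0^{-1} .
\end{align*}
This equals $\mathcal{B}\mathcal{A}_0^{-1}-\mathcal{A}_\varepsilon^{-1}\mathcal{B}$ plus terms of the form $\mathcal{A}_\varepsilon^{-1}(\cdot)\mathcal{B}\mathcal{A}_0^{-1}$ and $\mathcal{A}_\varepsilon^{-1}\mathcal{B}(\cdot)\mathcal{A}_0^{-1}$. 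These are bounded using $\norm{\mathcal{B}}_{\mathfrak{V}\to\mathfrak{H}}$ and $\norm{\mathcal{B}}_{\mathfrak{H}\to\mathfrak{V}^*}$ together with \cref{lem:inve-coer}.

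Second, the $\delta\mathcal{B}$ piece and the $\mathcal{R}$ piece are each bounded by a symmetric splitting: use $\mathcal{A}_\varepsilon^{-1}\colon\mathfrak{V}^*\to\mathfrak{H}$ against $\mathcal{A}_0^{-1}\colon\mathfrak{H}\to\mathfrak{H}$, or $\mathcal{A}_\varepsilon^{-1}\colon\mathfrak{H}\to\mathfrak{H}$ against $\mathcal{A}_0^{-1}\colon\mathfrak{H}\to\mathfrak{V}$, and average the two. This averaging produces the factors $\tfrac12$ in $C_1$ and $C_2$.

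By \cref{lem:chec-H2}, the constants $M,m$ are admissible here, with $m=0$. One caveat is that $\mathcal{M}_0$ is only accretive on $\Pi_0$. I would handle this via \cref{rem:rela-accr}, or by noting that everything acts on $\Pi_0$, where $\mathcal{A}_0$ restricts. Collecting terms gives
\begin{align*}
C_1\big(\norm{\mathcal{B}^{\corr}_\varepsilon}_{\mathfrak{V}\to\mathfrak{H}}+\norm{\mathcal{B}^{\corr}_\varepsilon}_{\mathfrak{H}\to\mathfrak{V}^*}\big)
+C_2\big(\norm{\mathcal{R}^{\corr}_\varepsilon}_{\mathfrak{V}\to\mathfrak{H}}+\norm{\mathcal{R}^{\corr}_\varepsilon}_{\mathfrak{H}\to\mathfrak{V}^*}\big).
\end{align*}

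\textbf{Step 3: insert \eqref{eq:143} and strip off the scaling.} We have $\mathcal{B}^{\corr}_\varepsilon=\varepsilon\mathcal{S}_\varepsilon(\mathcal{J}_{\micro}-\eta I)^{-1}\bar{\mathcal{V}}\Pi_0\mathcal{S}_\varepsilon^{-1}\Pi_0$, and $\mathcal{S}_\varepsilon^{-1}\Pi_0=\Pi_0$ by \eqref{eq:5}. The $\mathfrak{H}$-norm is unitarily invariant, but the $\mathfrak{V}$- and $\mathfrak{V}^*$-norms need care.

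On $\ran\Pi_0$, $\mathcal{S}_\varepsilon$ is the identity, so the input side contributes nothing. On the output side, the $\mathfrak{V}^*$-norm of $\mathcal{S}_\varepsilon g$ is a supremum over $v\in\mathfrak{V}$ of $\langle g,\mathcal{S}_\varepsilon^{-1}v\rangle$. Here $\mathcal{S}_\varepsilon^{-1}\mathfrak{V}$ is the domain of $\mathcal{J}_{\macro}+\varepsilon^{-1}\mathcal{J}_{\micro}$, and this need not coincide isometrically with $\mathfrak{V}$. This is the main obstacle. I would try to resolve it by interpreting the norms on the right of \eqref{eq:131} with respect to the conjugated graph structure, in the spirit of \cref{rem:nons-homo}, which says the isometry of $\mathcal{S}_\varepsilon$ "allows us to completely ignore it". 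Concretely, I would define the $\mathfrak{V}$-norm through $\mathcal{S}_\varepsilon^{-1}\mathcal{J}\mathcal{S}_\varepsilon$, so that $\mathcal{S}_\varepsilon$ becomes an isometry of both $\mathfrak{V}$ and $\mathfrak{V}^*$. The same treatment applies to $\mathcal{R}^{\corr}_\varepsilon$, whose factor is $(\mathcal{J}_{\micro}-\eta I)^{-1}[\mathcal{J}_{\macro},\bar{\mathcal{V}}]\Pi_0$. Pulling out the factor $\varepsilon$ then yields \eqref{eq:131}.
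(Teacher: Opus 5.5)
Your Steps 1 and 2 follow the paper's proof. The paper applies the estimates of \cref{thm:rate} to $\mathcal{A}_\varepsilon^{-1}\mathcal{V}^{\corr}_\varepsilon\mathcal{A}_0^{-1}\Pi_0$, carries the extra term $C_3\norm{\Pi_\perp\mathcal{A}_0^{-1}\Pi_0}_{\mathfrak{H}\to\mathfrak{H}}$, and removes it at the end using exactly your observation $[\mathcal{A}_0^{-1},\Pi_0]=0$. Your averaging argument is what produces the factors $\tfrac12$ in $C_1$ and $C_2$. The caveat about $\mathcal{M}_0$ is unnecessary: since $\mathcal{M}_0=\Pi_0\mathcal{M}_0\Pi_0$, we have $\Re\langle\mathcal{M}_0u,u\rangle=\Re\langle\mathcal{M}_0\Pi_0u,\Pi_0u\rangle\ge 0$ for all $u\in\mathfrak{H}$, so \ref{H:mate-prop} holds globally with $m=0$.

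The gap is in Step 3. The paper simply deletes $\mathcal{S}_\varepsilon$ from all four norms, citing its unitarity (\cref{rem:nons-homo}). You are right that this is harmless for the $\mathfrak{V}\to\mathfrak{H}$ norms. On the input side $\mathcal{S}_\varepsilon^{-1}\Pi_0=\Pi_0=\Pi_0\mathcal{S}_\varepsilon^{-1}$, and on the output side $\mathcal{S}_\varepsilon$ is unitary on $\mathfrak{H}$. You are also right that it is not automatic for the $\mathfrak{H}\to\mathfrak{V}^*$ norms, because $\mathcal{S}_\varepsilon^{-1}\mathcal{J}\mathcal{S}_\varepsilon=\mathcal{J}_{\macro}+\varepsilon^{-1}\mathcal{J}_{\micro}\neq\mathcal{J}$.

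However, your remedy does not prove \eqref{eq:131}. Let $\tilde{\mathfrak{V}}_\varepsilon$ denote the graph space of $\mathcal{J}_{\macro}+\varepsilon^{-1}\mathcal{J}_{\micro}$. Then $\mathcal{S}_\varepsilon$ is an isometric isomorphism $\tilde{\mathfrak{V}}_\varepsilon^*\to\mathfrak{V}^*$, not an isometry of $\mathfrak{V}^*$ onto itself. Meanwhile the left-hand side is controlled through the graph norm of $\mathcal{J}$ via \cref{lem:inve-coer} and \cref{thm:rate}, and that norm cannot be swapped out. What your argument actually gives is
\begin{align*}
  \norm{\mathcal{B}^{\corr}_\varepsilon}_{\mathfrak{H}\to\mathfrak{V}^*}
  =\varepsilon\norm{\mathcal{S}_\varepsilon X}_{\mathfrak{H}\to\mathfrak{V}^*}
  =\varepsilon\norm{X}_{\mathfrak{H}\to\tilde{\mathfrak{V}}_\varepsilon^*},
  \qquad X\coloneqq(\mathcal{J}_{\micro}-\eta I)^{-1}\bar{\mathcal{V}}\Pi_0,
\end{align*}
and likewise for $\mathcal{R}^{\corr}_\varepsilon$. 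This is a bound in an $\varepsilon$-dependent norm, not $\varepsilon\norm{X}_{\mathfrak{H}\to\mathfrak{V}^*}$. Declaring the right-hand norms of \eqref{eq:131} to be these changes the theorem. It also defeats its purpose, which is that the right-hand side involves operators independent of $\varepsilon$ apart from $\eta$.

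To get the stated bound you would need a comparison $\norm{\mathcal{S}_\varepsilon g}_{\mathfrak{V}^*}\le C\norm{g}_{\mathfrak{V}^*}$ on the ranges of $X$ and of $(\mathcal{J}_{\micro}-\eta I)^{-1}[\mathcal{J}_{\macro},\bar{\mathcal{V}}]\Pi_0$. The hypotheses do not provide one. In the periodic example, $\mathcal{S}_\varepsilon$ shifts the $x$-frequency of the $k$-th $y$-Fourier mode by an amount of order $\abs{k}/\varepsilon$. Hence the $\mathfrak{V}^*$-norms of individual vectors, which are of $H^{-1}_x$ type in the scalar component, can change by factors of order $\varepsilon^{\pm1}$.

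The paper's proof takes this step without justification, so it does not close the hole either. Along this route you can rigorously prove one of two weaker statements:
\begin{itemize}
\item the bound with $\norm{\mathcal{S}_\varepsilon(\cdot)}_{\mathfrak{H}\to\mathfrak{V}^*}$ in place of $\norm{\cdot}_{\mathfrak{H}\to\mathfrak{V}^*}$;
\item using $\norm{g}_{\mathfrak{V}^*}\le\norm{g}$, the bound with $\norm{\cdot}_{\mathfrak{H}\to\mathfrak{H}}$ in that slot.
\end{itemize}
The second is useful only when these operators are bounded on $\mathfrak{H}$, and the term $\mathcal{J}_{\macro}\mathcal{K}_{\micro}$ in $\bar{\mathcal{V}}$ typically prevents that.
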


In a stationary homogenization problem, the last two terms in
\eqref{eq:131} vanishes as shown in \cref{rem:nons-homo}. The
following result simplifies the other norms:
\begin{proposition}
\label{pro:freq-cut}
Let $E_{\lambda}$ be the spectral measure of
$-i \mathcal{J}_{\micro}.$ We have
\begin{align}
\label{eq:56}
\norm{(\mathcal{J}_{\micro} + \delta\varepsilon I)^{-1} \bar{\mathcal{V}} \Pi_0}_{\mathfrak{V} \to
  \mathfrak{H}}
  = \sup_{\norm{u}_\mathfrak{V}=1} \int_{\RR} \frac{1}{\lambda^2 +
  (\delta\varepsilon)^2} d \norm{E_\lambda \bar{\mathcal{V}} \Pi_0
  u}^2 \eqqcolon \I,
\end{align}
Moreover, for a fixed $\lambda_0 > 0$,
\begin{align}
\label{eq:157}
  \I
  = \sup_{\norm{u}_\mathfrak{V}=1} \int_{\abs{\lambda} < \lambda_0}
  \frac{1}{\lambda^2 + (\delta\varepsilon)^2} d \norm{E_\lambda \bar{\mathcal{V}} \Pi_0 u}^2 + \mathcal{O}(1).
\end{align}
Suppose further that $\mathcal{J}$ commutes strongly with
$\mathcal{J}_{\micro}$. Then similar result holds for 
\begin{align}
\label{eq:167}
  \begin{split}
    \norm{(\mathcal{J}_{\micro} + \delta\varepsilon I)^{-1}
    \bar{\mathcal{V}} \Pi_0}_{\mathfrak{H} \to \mathfrak{V}^*}^2
    &=
  \sup_{\norm{f}=1} \int_{\RR} \frac{1}{\lambda^2 +
  (\delta\varepsilon)^2} d \norm{E_\lambda \bar{\mathcal{V}} \Pi_0
      f}_{\mathfrak{V}^*}^2 \eqqcolon \I^{*}\\
    &= \sup_{\norm{f}=1} \int_{\abs{\lambda} < \lambda_0} \frac{1}{\lambda^2 +
  (\delta\varepsilon)^2} d \norm{E_\lambda \bar{\mathcal{V}} \Pi_0
      f}_{\mathfrak{V}^*}^2 + \mathcal{O}(1).
  \end{split}
\end{align}
\end{proposition}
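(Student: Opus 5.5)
The plan is to apply the spectral theorem for the self-adjoint operator $-i\mathcal{J}_{\micro}$, which is available because $\mathcal{J}_{\micro}$ is closed, densely defined and skew-adjoint by \ref{H:scal-stru}. Writing $\mathcal{J}_{\micro} = i\int_{\RR}\lambda\, dE_\lambda$, the resolvent becomes the functional calculus operator
\begin{align*}
(\mathcal{J}_{\micro}+\delta\varepsilon I)^{-1} = \int_{\RR} \frac{1}{i\lambda+\delta\varepsilon}\, dE_\lambda .
\end{align*}
Its norm is at most $1/(\delta\varepsilon)$ since $\delta\varepsilon>0$, consistent with the bound $1/|\eta|$ after \cref{def:flux-corr} with $\eta=-\delta\varepsilon$.

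For fixed $u\in\mathfrak{V}$, set $g\coloneqq\bar{\mathcal{V}}\Pi_0 u\in\mathfrak{H}$. Then $\bar{\mathcal{V}}\Pi_0$ is bounded on $\mathfrak{H}$ (by \cref{thm:exis-corr} and \cref{lem:chec-H2}), and $\mathcal{J}_{\macro}\mathcal{K}_{\micro}\Pi_0$ is bounded on $\mathfrak{V}$, so $g$ is well defined. The standard identity $\norm{\phi(T)g}^2=\int|\phi(\lambda)|^2\,d\norm{E_\lambda g}^2$ gives
\begin{align*}
\norm{(\mathcal{J}_{\micro}+\delta\varepsilon I)^{-1}g}^2=\int_{\RR}\frac{d\norm{E_\lambda g}^2}{\lambda^2+(\delta\varepsilon)^2}.
\end{align*}
Taking the supremum over $\norm{u}_{\mathfrak V}=1$ yields \eqref{eq:56}. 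The identity is really for the squared operator norm, as written in \eqref{eq:167}; I would read \eqref{eq:56} in that sense.

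For \eqref{eq:157}, I split $\RR=\{|\lambda|<\lambda_0\}\cup\{|\lambda|\ge\lambda_0\}$. On the high-frequency part the integrand is bounded by $\lambda_0^{-2}$, uniformly in $\varepsilon$. That part is therefore at most
\begin{align*}
\lambda_0^{-2}\norm{g}^2\le\lambda_0^{-2}\norm{\bar{\mathcal{V}}\Pi_0}_{\mathfrak V\to\mathfrak H}^2 .
\end{align*}
Here $\norm{\bar{\mathcal{V}}\Pi_0}_{\mathfrak V\to\mathfrak H}$ is finite and independent of $\varepsilon$, because $\bar{\mathcal{V}}$ does not involve $\varepsilon$ (see \eqref{eq:31.2}). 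Both pieces are nonnegative, and the high-frequency piece is bounded uniformly in $u$, so the supremum of the sum differs from the supremum of the low-frequency piece by at most this constant. That is the claimed $\mathcal{O}(1)$.

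For \eqref{eq:167}, the main obstacle is making sense of spectral measures in $\mathfrak{V}^*$. The assumption that $\mathcal{J}$ commutes strongly with $\mathcal{J}_{\micro}$ means the spectral projections $E_\lambda$ commute with $\mathcal{J}$. Hence they preserve $\mathfrak{V}=\dom\mathcal{J}$, are contractions in the graph norm, and extend by duality to $\mathfrak{V}^*$. I would make this concrete using the joint functional calculus of the strongly commuting skew-adjoint pair $(\mathcal{J},\mathcal{J}_{\micro})$. This realises $\mathfrak{V}^*$ isometrically as $\mathfrak H$ weighted by $(1+\mu^2)^{-1/2}$, where $\mu$ is the spectral variable of $-i\mathcal{J}$. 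Under this identification, $(I-\mathcal{J}^2)^{-1/2}$ commutes with $(\mathcal{J}_{\micro}+\delta\varepsilon)^{-1}$, so $\norm{(\mathcal{J}_{\micro}+\delta\varepsilon)^{-1}h}_{\mathfrak{V}^*}^2$ equals $\int(\lambda^2+(\delta\varepsilon)^2)^{-1}\,d\norm{E_\lambda h}_{\mathfrak V^*}^2$ with $h=\bar{\mathcal{V}}\Pi_0 f$. The frequency split then goes through exactly as before. The high-frequency remainder is bounded by $\lambda_0^{-2}\norm{\bar{\mathcal{V}}\Pi_0}_{\mathfrak H\to\mathfrak V^*}^2$, which is finite because $\bar{\mathcal{V}}\Pi_0$ is bounded $\mathfrak H\to\mathfrak H$ and $\mathfrak H\hookrightarrow\mathfrak V^*$ is a contraction.
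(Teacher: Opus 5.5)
Your skeleton is the paper's: the spectral theorem for $-i\mathcal{J}_{\micro}$, a cut at $\lambda_0$ with the high-frequency part bounded by $\lambda_0^{-2}\norm{\bar{\mathcal{V}}\Pi_0 u}^2$, and strong commutation to transfer $E_\lambda$ to $\mathfrak{V}^*$. Your realisation of $\mathfrak{V}^*$ through $(I-\mathcal{J}^2)^{-1/2}$ is a cleaner form of the paper's ``$E_\lambda$ is self-adjoint on $\mathfrak{V}$, extend by duality'' step, because that operator commutes with all bounded functions of $\mathcal{J}_{\micro}$. You are also right that \eqref{eq:56} is an identity for the squared norm, which is what the paper's proof actually establishes.

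The step that fails is your claim that $\bar{\mathcal{V}}\Pi_0$ is bounded on $\mathfrak{H}$.
\begin{itemize}
\item By \eqref{eq:31.2}, $\bar{\mathcal{V}}\Pi_0$ contains $-\mathcal{J}_{\macro}\mathcal{K}_{\micro}\Pi_0$. \cref{thm:exis-corr} and \cref{lem:chec-H2} only control the remaining terms, which is why the paper uses $\mathfrak{V}\to\mathfrak{H}$ and $\mathfrak{H}\to\mathfrak{V}^*$ norms at all.
\item A concrete case: take the periodic example with a nonconstant laminate $a=a(y_1)I$ in $d=2$. Solving \eqref{eq:85} gives $w_p=\beta(y_1)\,p_{0,2}(x)\,e_2$, with $\beta=a_\alpha/\langle a_\alpha\rangle_y-1\neq 0$ (here $a_\alpha$ is scalar). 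So $\mathcal{J}_{\macro}\mathcal{K}_{\micro}\Pi_0(0,p_0)$ has top entry $\beta(y_1)\,\partial_{x_2}p_{0,2}(x)$, which is unbounded on $\mathfrak{H}$.
\item Consequently your closing argument (boundedness $\mathfrak{H}\to\mathfrak{H}$, then the contraction $\mathfrak{H}\hookrightarrow\mathfrak{V}^*$) does not prove $\norm{\bar{\mathcal{V}}\Pi_0}_{\mathfrak{H}\to\mathfrak{V}^*}<\infty$. That norm is exactly the constant behind the $\mathcal{O}(1)$ in \eqref{eq:167}.
\item The derivative term has to go through the duality extension \eqref{eq:160}. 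In this example that works, since $\mathcal{J}_{\macro}$ and $\mathcal{J}$ are the same operator there, but the axioms do not imply it in general.
\end{itemize}

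The same example shows that the $\mathfrak{V}\to\mathfrak{H}$ bound on $\mathcal{J}_{\macro}\mathcal{K}_{\micro}\Pi_0$, which you invoke for \eqref{eq:157}, is not automatic either. For $p_0=\nabla_x^{\perp}\psi$ one has $\norm{(0,p_0)}_{\mathfrak{V}}=\norm{\nabla_x\psi}$, while the top entry is $\beta\,\partial_{x_1}\partial_{x_2}\psi$. So finiteness of both $\norm{\bar{\mathcal{V}}\Pi_0}_{\mathfrak{V}\to\mathfrak{H}}$ and $\norm{\bar{\mathcal{V}}\Pi_0}_{\mathfrak{H}\to\mathfrak{V}^*}$ must be imposed as hypotheses. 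The paper's proof likewise just takes ``$\bar{\mathcal{V}}\Pi_0$ maps $\mathfrak{V}\to\mathfrak{H}$, resp.\ $\mathfrak{H}\to\mathfrak{V}^*$'' for granted. What you cannot do is derive these bounds from boundedness on $\mathfrak{H}$, since that boundedness is false.
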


The integrals $\I$ and $\I^{*}$ highlight the distinction between
periodic and stochastic homogenization:
\begin{itemize}
\item \emph{Periodic Homogenization (Spectral Gap)}: If the medium is
  periodic, $\mathcal{J}_{\micro}$ has no eigenvalues near zero. The
  measures $d \norm{E_\lambda \bar{\mathcal{V}} \Pi_0 u}^2$ and
  $d \norm{E_\lambda \bar{\mathcal{V}} \Pi_0 f}_{\mathfrak{V}^{*}}^2$
  are zero on some interval $(-\lambda_0, \lambda_0)$. Thus, we can
  take $\delta \to 0$, the integral remains bounded by
  $1/\lambda_0^2$, and we recover the optimal
  $\mathcal{O}(\varepsilon)$ convergence rate.
\item \emph{Stochastic Homogenization (No Spectral Gap):} In a random
  medium, the spectrum extends all the way to zero (infrared
  divergence). The integrals may blow up as
  $\delta \varepsilon \to 0$. However, if the probability space has
  good mixing properties (e.g., Log-Sobolev inequalities or finite
  range dependence), one can prove that the density of states near
  zero decays like $\lambda^\gamma$ for some $\gamma$. The integral
  then blows up at a controlled fractional rate, yielding the famous
  sub-linear convergence rates like $\mathcal{O}(\varepsilon^{1/2})$
  or $\mathcal{O}(\varepsilon \sqrt{\ln(1/\varepsilon)})$ in dimension
  $d = 1$ and $d = 2$. We will derive those results using our
  point-free theory in \cref{sec:application}.
\end{itemize}

\cref{thm:zero-mode} and \cref{thm:quan-homo} motivate the following
definition, which is essentially a point-free version of the notion
$H$-convergence by Luc Tartar and Fran\c{c}ois Murat \cite{murat$H$convergence1997}:

\begin{definition}[Abstract $H$-system and $H$-flow/Homogenization Flow] \label{def:homo-syst}
An \emph{abstract $H$-system} is a tuple $(\mathfrak{H}, \mathcal{J}, \mathcal{M}, \{\mathcal{S}_\varepsilon\}_{\varepsilon > 0})$ consisting of a fixed Hilbert space $\mathfrak{H}$, a fixed differential structure $\mathcal{J}$, a fixed material operator $\mathcal{M}$, and a 1-parameter family of unitary scaling operators $\{\mathcal{S}_\varepsilon\}_{\varepsilon > 0}$, such that axioms
\ref{H:diff-stru}, \ref{H:scal-stru}, and \ref{H:mate-prop-H} are
satisfied.

An abstract $H$-system is called an \emph{$H$-flow} or
\emph{homogenization flow} if the scaling operators form a strongly
continuous unitary group satisfying the additive property with respect
to the inverse parameter $t \coloneqq 1/\varepsilon$, that is, for
$T_t \coloneqq \mathcal{S}_{1/t}$, then
\begin{align*}
T_{t_1} T_{t_2} = T_{t_1 + t_2}.
\end{align*}
\end{definition}

Heuristically, the parameter $t$ represents the frequency or
magnification level. With this definition, \cref{thm:zero-mode}
guarantees that a heterogeneous mixture governed by a abstract
$H$-system can be approximated by an effective/homogenized
system as the heterogeneity size $\varepsilon$ approaches
zero. \cref{thm:quan-homo} measures the ``mixing'' rate of the
homogenization flow.

From \cref{thm:quan-homo}, it can be seen that the algebraic machinery
reduces the convergence rate calculation for a multiscale problem into
finding two operator norms that no longer depend on the heterogeneity
scale $\varepsilon$. In practice, while the periodic setting may
enable an elegant spectral argument yielding bounded norms (see
\cref{sec:application}), the stochastic setting requires concrete
inputs from probability and geometry of the problems in order to
estimate the spatial decorrelation (or statistical mixing) of the
following residue fields:
\begin{definition}[Residual Operators]
\label{def:resi-fiel}
Define
\begin{align}
\label{eq:53}
  \begin{split}
     F_{\sta} 
  &\coloneqq \bar{\mathcal{V}} \Pi_0 ,\\
  F_{\bns} 
  &\coloneqq [\mathcal{J}_{\macro},\bar{\mathcal{V}}] \Pi_0 .
  \end{split}
\end{align}
Here, $F_{\sta}$ measures the rate of decorrelation for the stationary
part of the problem, and $F_{\bns}$ for the non-stationary part and
boundary layer effects.
\end{definition}
Heuristically, we see from \eqref{eq:112} that 
\begin{align}
\label{eq:153}
 \Phi_{\micro}  \approx \mathcal{J}_{\micro}^{-1}F_{\sta}.
\end{align}

\begin{proposition}[Structure of Residue Fields]
  \label{pro:resi-fiel}
  We have 
\begin{align}
\label{eq:155}
F_{\bns} = [\mathcal{J}_{\macro}, F_{\sta}].
\end{align}
Let $u_0 \in \Pi_0$ and $w \coloneqq \mathcal{K}_{\micro} u_0$ (thus
$w \in \Pi_{\fluct}$ and is a solution of abstract corrector problem $\Pi_{\fluct}
\mathcal{M} (u_0 + w) + \alpha w = 0$). Then 
\begin{enumerate}
\item[(i)] $\Pi_0 F_{\sta} u_0 = \Pi_{\fluct} F_{\sta} u_0 =0$, and 
\begin{align}
\label{eq:55}
F_{\sta} u_0 = \Pi_{\grad} F_{\sta} u_0 = -\Pi_{\grad} \left(
  \mathcal{M}(u_0 + w) + \mathcal{J}_{\macro} w \right).
\end{align}
\item[(ii)]$ \Pi_0 F_{\bns} u_0 = 0 $, and 
\begin{align}
\label{eq:154}
  \begin{split}
    \Pi_{\fluct} F_{\bns} u_0
    &= - \Pi_{\fluct} \mathcal{J}_{\macro} \left( \mathcal{M}(u_0 + w)
      + \mathcal{J}_{\macro} w \right),\\
    \Pi_{\grad} F_{\bns} u_0
    &= - \Pi_{\grad} ( [\mathcal{J}_{\macro}, \mathcal{M}(I +
      \mathcal{K}_{\micro})] u_0 \\
    &\qquad{}+{} [\mathcal{J}_{\macro}, \mathcal{J}_{\macro} \mathcal{K}_{\micro}] u_0 + \alpha \mathcal{J}_{\macro} w ).
  \end{split}
\end{align}
\end{enumerate}
\end{proposition}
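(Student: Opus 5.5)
The plan is to use only the algebraic identities already established: the block identities \eqref{eq:110} and \eqref{eq:124}, the relations \eqref{eq:62}, \eqref{eq:139}, \eqref{eq:65}, \eqref{eq:66} and \eqref{eq:138}, and the explicit form \eqref{eq:31.2} of $\bar{\mathcal{V}}$. Throughout, I work on $u_0 \in \Pi_0$ lying in $\dom \mathcal{J}_{\macro}$, with $w = \mathcal{K}_{\micro} u_0$ in the relevant domains. The final identities are then read on this dense set, or in $\mathfrak{V}^*$ as in \ref{H:comm-equa-H}.

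\emph{Step 1: the identity \eqref{eq:155}.} By \eqref{eq:138} and \eqref{eq:140}, $\Pi_0 \mathcal{J}_{\macro} = \mathcal{J}_{\macro} \Pi_0$. Hence
\begin{align*}
[\mathcal{J}_{\macro}, F_{\sta}] = \mathcal{J}_{\macro}\bar{\mathcal{V}}\Pi_0 - \bar{\mathcal{V}}\Pi_0\mathcal{J}_{\macro} = \mathcal{J}_{\macro}\bar{\mathcal{V}}\Pi_0 - \bar{\mathcal{V}}\mathcal{J}_{\macro}\Pi_0 = [\mathcal{J}_{\macro},\bar{\mathcal{V}}]\Pi_0 = F_{\bns}.
\end{align*}

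\emph{Step 2: part (i).} The statement $\Pi_0 F_{\sta} u_0 = \Pi_{\fluct} F_{\sta} u_0 = 0$ is exactly \eqref{eq:110} together with \eqref{eq:124}. By the decomposition \eqref{eq:72}, it follows that $F_{\sta}u_0 = \Pi_{\grad}F_{\sta}u_0$. Next I expand \eqref{eq:31.2}:
\begin{align*}
\bar{\mathcal{V}}u_0 = \mathcal{M}_0 u_0 - \mathcal{M}(u_0+w) - \mathcal{J}_{\macro}w - \alpha w.
\end{align*}
Applying $\Pi_{\grad}$ kills $\mathcal{M}_0u_0$, because $\mathcal{M}_0 = \Pi_0\mathcal{M}_0$. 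It also kills $\alpha w$, because $w \in \Pi_{\fluct}$. What remains is \eqref{eq:55}.

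\emph{Step 3: part (ii).} By Step 1, $F_{\bns}u_0 = \mathcal{J}_{\macro}F_{\sta}u_0 - F_{\sta}\mathcal{J}_{\macro}u_0$. Since $\mathcal{J}_{\macro}u_0 \in \Pi_0$ by \eqref{eq:138}, part (i) applied to $\mathcal{J}_{\macro}u_0$ shows that $F_{\sta}\mathcal{J}_{\macro}u_0 \in \Pi_{\grad}$. Therefore this term is invisible to both $\Pi_0$ and $\Pi_{\fluct}$. For the other term I write $F_{\sta}u_0 = \Pi_{\grad}F_{\sta}u_0$.

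\begin{itemize}
\item[(a)] \emph{The $\Pi_0$ component.} We have $\Pi_0\mathcal{J}_{\macro}\Pi_{\grad} = 0$ by \eqref{eq:139}, so $\Pi_0 F_{\bns}u_0 = 0$.
\item[(b)] \emph{The $\Pi_{\fluct}$ component.} I reinsert the full vector using $\Pi_{\fluct}\mathcal{J}_{\macro}\mathcal{P}_0 = 0$. Its two pieces are $\Pi_{\fluct}\mathcal{J}_{\macro}\Pi_{\fluct} = 0$, which is \eqref{eq:66}, and $\Pi_{\fluct}\mathcal{J}_{\macro}\Pi_0 = 0$. The latter follows from \eqref{eq:65} by taking adjoints and using skew-adjointness. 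Consequently $\Pi_{\fluct}\mathcal{J}_{\macro}\Pi_{\grad}x = \Pi_{\fluct}\mathcal{J}_{\macro}x$ for $x = -(\mathcal{M}(u_0+w)+\mathcal{J}_{\macro}w)$, which gives the first line of \eqref{eq:154}.
\item[(c)] \emph{The $\Pi_{\grad}$ component.} I expand $[\mathcal{J}_{\macro},\bar{\mathcal{V}}]u_0$ term by term using \eqref{eq:31.2}.
\begin{itemize}
\item The $\mathcal{M}_0$ term: $[\mathcal{J}_{\macro},\mathcal{M}_0]u_0 \in \Pi_0$, because $\mathcal{M}_0 = \Pi_0\mathcal{M}_0\Pi_0$ and $\mathcal{J}_{\macro}$ commutes with $\Pi_0$. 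So $\Pi_{\grad}$ removes it.
\item The $\alpha$ term: $\alpha[\mathcal{J}_{\macro},\mathcal{K}_{\micro}]u_0 = \alpha\mathcal{J}_{\macro}w - \alpha\mathcal{K}_{\micro}\mathcal{J}_{\macro}u_0$. The second piece lies in $\Pi_{\fluct}$ and is killed by $\Pi_{\grad}$.
\item The two remaining commutators, with $\mathcal{M}(I+\mathcal{K}_{\micro})$ and with $\mathcal{J}_{\macro}\mathcal{K}_{\micro}$, appear verbatim in \eqref{eq:154}.
\end{itemize}
Collecting signs yields the second line of \eqref{eq:154}.
\end{itemize}

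The algebra is routine. I expect the main obstacle to be rigor about domains. The quantities $\mathcal{J}_{\macro}w$, $\mathcal{J}_{\macro}\mathcal{J}_{\macro}w$ and $[\mathcal{J}_{\macro},\mathcal{M}]u_0$ involve unbounded operators and need not be defined on all of $\Pi_0$. Moving projections past $\mathcal{J}_{\macro}$, as in the adjoint step in (b), must be justified on $\dom\mathcal{J}_{\macro}$. I would handle this by first proving every identity for $u_0$ in a core of $\mathcal{J}_{\macro}$ inside $\Pi_0$. I would then interpret the resulting identities in the weak sense, that is, paired against test vectors in $\dom\mathcal{J}_{\macro}$. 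This is consistent with how \ref{H:comm-equa-H} is posed in $\mathfrak{V}^*$.
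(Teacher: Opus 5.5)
Your proof is correct, and for most of the statement it follows the paper. The genuine difference is how you obtain the $\Pi_{\grad}$-component in part~(ii).

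Step~1, part~(i), and the $\Pi_0$- and $\Pi_{\fluct}$-components of part~(ii) match the paper's argument, with two small differences. First, the paper re-derives $\Pi_0F_{\sta}u_0=\Pi_{\fluct}F_{\sta}u_0=0$ directly from \eqref{eq:73}, \eqref{eq:65}, \eqref{eq:66} and the corrector equation, rather than quoting \eqref{eq:110} and \eqref{eq:124}. Second, the paper checks \eqref{eq:155} only on vectors $u_0\in\Pi_0$, whereas your use of \eqref{eq:138} gives it as an operator identity.

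For the $\Pi_{\grad}$-component, the paper stays with the splitting \eqref{eq:156}:
\begin{itemize}
\item it substitutes \eqref{eq:55} for both $F_{\sta}u_0$ and $F_{\sta}(\mathcal{J}_{\macro}u_0)$, the latter producing $w'=\mathcal{K}_{\micro}\mathcal{J}_{\macro}u_0$;
\item it evaluates $\Pi_{\grad}\mathcal{J}_{\macro}\Pi_{\grad}=\Pi_{\grad}\mathcal{J}_{\macro}-\mathcal{J}_{\macro}\Pi_{\fluct}$ using \eqref{eq:62}, \eqref{eq:65}, \eqref{eq:66};
\item it then needs the corrector equation $\Pi_{\fluct}\mathcal{M}(u_0+w)=-\alpha w$, together with \eqref{eq:66}, to turn the leftover $\mathcal{J}_{\macro}\Pi_{\fluct}\bigl(\mathcal{M}(u_0+w)+\mathcal{J}_{\macro}w\bigr)$ into $-\alpha\mathcal{J}_{\macro}w$.
\end{itemize}
You instead expand $[\mathcal{J}_{\macro},\bar{\mathcal{V}}]u_0$ by linearity from \eqref{eq:31.2}. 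You then discard $[\mathcal{J}_{\macro},\mathcal{M}_0]u_0$ and $\alpha\mathcal{K}_{\micro}\mathcal{J}_{\macro}u_0$ purely by range considerations.

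Your route is shorter and uses neither the corrector equation nor the $\Pi_{\grad}\mathcal{J}_{\macro}\Pi_{\grad}$ identity. It also shows directly that the $\alpha\mathcal{J}_{\macro}w$ term is just the $-\alpha\mathcal{K}_{\micro}$ summand of $\bar{\mathcal{V}}$. The paper's route has the expository merit of getting all three components of (ii) from the single formula \eqref{eq:156} combined with the residual representation \eqref{eq:55} from (i). As for domains, both arguments are equally formal; your caveat is extra care that the paper does not take.
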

We explain the intuition behind this result: 
\begin{itemize}
\item $\Pi_0 F_{\sta} u_0 = 0$: The effective operator $\mathcal{M}_0$
  is chosen precisely so that the macroscopic average of the residual
  is zero.
\item $\Pi_{\fluct} F_{\sta} u_0 = 0$: The corrector
  $\mathcal{K}_{\micro}$ is chosen precisely so that
  $w= \mathcal{K}_{\micro} u_0$ annihilates the fluctuation component. 
\item $ \Pi_{\fluct} F_{\bns} u_0$ does not vanish, which accounts for
  the fact that macroscopic variations in the material generate new
  microscopic fluctuations of the residual.
\end{itemize}

\section{Application: the Elliptic Divergence Operators}
\label{sec:application}

To illustrate our approach, we consider the highly oscillating
operator
\begin{align}
\label{eq:20}
H_\varepsilon = -\nabla \cdot\left( a_{\varepsilon}
  \nabla \right) + I
\end{align}
where $a_{\varepsilon}(x)$ is a bounded coercive $d \times d-$coefficient
matrix.
% Without loss of generality, we can replace $H_{\varepsilon}$ and
% $H_0$ by their shifted operators $H_{\varepsilon} + I$ and $H_0+I$
% \cite[Lemma 2.6.1]{daviesSpectralTheoryDifferential1995}.
For $f \in L^2(\RR^d;\RR)$, we write the multiscale problem 
\begin{align}
\label{eq:22}
H_{\varepsilon} u_{\varepsilon} = f \qquad \text{ for }
  u_{\varepsilon} \in H^1(\RR^d;\RR).
\end{align}
in first order system by letting $p_{\varepsilon} \coloneqq
-a_{\varepsilon} \nabla u_{\varepsilon}$,
\begin{align}
\label{eq:36}
  \begin{cases}
    u_{\varepsilon} + \nabla \cdot p_{\varepsilon}
    &= f,\\
    \nabla u_{\varepsilon} + a_{\varepsilon}^{-1} p_{\varepsilon}
    &= 0.
  \end{cases}  
\end{align}
Let
\begin{align}
\label{eq:38}
  \begin{split}
    %&n = d+1, % \quad
      % H_k = L^2(\RR^d;\RR), 1 \le k \le n, \quad \mathfrak{H} =
      % \bigoplus_{k=1}^{d+1} L^2(\RR^d; \RR) \simeq L^2(\RR^d;\RR^{d+1}),
    % \\
    % &U_{\varepsilon}
    %   =
    %   \begin{pmatrix}
    %     u_{\varepsilon}\\
    %     p_{\varepsilon}
    %   \end{pmatrix}, \quad
    %   U_0
    %   =
    %   \begin{pmatrix}
    %     u_0\\
    %     p_0
    %   \end{pmatrix},\\
    % &\mathcal{M}_{\varepsilon}
    % =
    % \begin{pmatrix}
    %   1 & 0\\
    %   0 & a_{\varepsilon}^{-1}
    % \end{pmatrix}, % \quad
    % \mathcal{M}_0
    % =
    % \begin{pmatrix}
    %   1 & 0\\
    %   0 & a_0^{-1}
    % \end{pmatrix},
    %\\
    &\mathcal{J}
      =
      \begin{pmatrix}
        0 & \nabla \cdot\\
        \nabla & 0
      \end{pmatrix}, \quad
    F =
      \begin{pmatrix}
        f\\
        0
      \end{pmatrix}% ,
    % \\
    % &\mathcal{A}_\varepsilon = \mathcal{M}_\varepsilon + \mathcal{J}% ,
    %   % \quad \mathcal{A}_0 = \mathcal{M}_0 + \mathcal{J}.
  \end{split}
\end{align}
Under the Fourier transform $\mathcal{F}$, derivatives become
algebraic multipliers: $\nabla \to i\xi$ and $\nabla \cdot \to
i\xi^T$. Thus, $\mathcal{J}$ acts as a $(d+1)\times (d+1)-$matrix multiplication operator
with symbol:
\begin{align}
\label{eq:42}
\widehat{\mathcal{J}}(\xi) = i \begin{pmatrix} 0 & \xi^T \\ \xi & 0 \end{pmatrix}.
\end{align}

We want to obtain the convergence rates using \cref{thm:quan-homo} in
the periodic setting. % following settings:
% \begin{enumerate}
% \item Periodic homogenization for all dimensions $d \ge 1$.
% %\item Quasi-periodic homogenization for all dimensions $d \ge 1$.
% \item Stochastic homogenization for dimensions $d = 1$, $d =2$, and $d
%   \ge 3$.
% \end{enumerate}
% For each setting,
We will define the appropriate Hilbert space
$\mathfrak{H}$ and the scaling $\mathcal{S}_{\varepsilon}$, then
verify that
$ (\mathfrak{H}, \mathcal{J}, \mathcal{M},\{\mathcal{S}_{\varepsilon}\}_{\varepsilon > 0})$ forms a homogenization flow
as defined in \cref{def:homo-syst}, which, by \cref{thm:zero-mode},
automatically implies the qualitative homogenization (the weak
convergence of the finescale solutions to the effective one). Then we
calculate
\begin{enumerate}
\item The projections $\Pi_0$, $\Pi_{\fluct}$, and
  $\Pi_{\grad}$ using the Fourier symbol of $\mathcal{J}$ in \eqref{eq:42}.
\item The corrector operator $\mathcal{K}_{\micro}$ and
  $\mathcal{M}_0$ in \cref{thm:exis-corr}.
\item The convergence rate using \cref{thm:quan-homo}, via a suitable
  choice of regularizer $(\delta, \alpha)$.
\end{enumerate}

% \subsection{Periodic Homogenization}
% \label{sec:periodic}
\paragraph{The Hilbert Space $\mathfrak{H}$.}
For the periodic setting, the global space Hilbert space is:
\begin{align*}
\mathfrak{H} = L^2(\RR^d \times \mathbb{T}^d; \RR^{d+1}).
\end{align*}
Elements are vector fields $U(x,y) = \begin{pmatrix} u(x,y) \\
  p(x,y) \end{pmatrix}$, where $x \in \RR^d$ is the macroscopic
variable and $y \in \mathbb{T}^d$ is the microscopic variable.

\paragraph{The Scaling Operator $\mathcal{S}_\varepsilon$.}
We define the unitary scaling transformation $\mathcal{S}_\varepsilon
: \mathfrak{H} \to \mathfrak{H}$ as:
\begin{align}
\label{eq:79}
(\mathcal{S}_\varepsilon u)(x, y) \coloneqq u\left(x,~ \left( y +
  \frac{x}{\varepsilon} \right) \text{mod~} 1\right), \qquad \text{
  for } u \in \mathfrak{H}.
\end{align}
\paragraph{Verifying the Homogenization Flow.}
\begin{itemize}
\item {\ref{H:diff-stru} Differential Structure.} The operator
  $\mathcal{J} = \begin{pmatrix} 0 & \nabla_x \cdot \\ \nabla_x &
    0 \end{pmatrix}$ is clearly closed, densely defined, and
  skew-adjoint ($\mathcal{J}^* = -\mathcal{J}$). % Since $\mathcal{J}$

\item   {\ref{H:scal-stru} Scaling Structure.} By the chain rule,
  applying $\nabla_x (\mathcal{S}_\varepsilon U)= (\nabla_x + \frac{1}{\varepsilon}\nabla_y)U$. Therefore, conjugating $\mathcal{J}$ by the scaling operator splits the derivative:
\begin{align*}
\mathcal{S}_\varepsilon^{-1} \mathcal{J} \mathcal{S}_\varepsilon = \mathcal{J}_{\macro} + \frac{1}{\varepsilon} \mathcal{J}_{\micro}
\end{align*}
where $\mathcal{J}_{\macro} = \begin{pmatrix} 0 & \nabla_x \cdot
      \\ \nabla_x & 0 \end{pmatrix}$ and $\mathcal{J}_{\micro}
    = \begin{pmatrix} 0 & \nabla_y \cdot \\ \nabla_y &
                                                       0 \end{pmatrix}$. This
                                                     guarantees
                                                     $[\mathcal{J}_{\macro},\mathcal{J}_{\micro}]
                                                     = 0$ and
                                                     $\mathcal{J}$
                                                     strongly commutes
                                                     with $\mathcal{J}_{\micro}$.

\item  {\ref{H:mate-prop-H} Material Coefficients.} The unscaled material operator is $\mathcal{M} = \begin{pmatrix} 1 & 0 \\ 0 & a^{-1}(y) \end{pmatrix}$. Because $a(y)$ is strictly positive and bounded, $\mathcal{M}$ is bounded and uniformly accretive. % on the $p$-component (which satisfies the relaxed accretivity $m \ge 0$ allowed by the regularizer $\alpha I$).

\end{itemize}
\paragraph{The Projections.}
The microscopic operator $\mathcal{J}_{\micro}$ acts only on the $y$
variable. In the Fourier domain $k \in \mathbb{Z}^d$, its symbol is
$\widehat{\mathcal{J}}_{\micro}(k) = i \begin{pmatrix} 0 & k^T \\ k &
                                                                      0 \end{pmatrix}$
                                                                    (more
                                                                    precisely,
                                                                    $I
                                                                    \otimes
                                                                    i \begin{pmatrix}
                                                                      0
                                                                      &
                                                                        k^T \\ k & 0 \end{pmatrix}$).
\begin{itemize}
\item {$\Pi_{\grad}$ (The Gradient Space).} This is the
  orthogonal projection onto
  $\overline{\text{ran}(\mathcal{J}_{\micro})}$. It consists of
  zero-mean scalars and pure gradient vector fields in $y$:
  \begin{align}
    \label{eq:81}
\Pi_{\grad} U = \begin{pmatrix} u - \langle u \rangle_y \\ \mathbb{P}_{\text{pot}} p \end{pmatrix}.
\end{align}
where $\mathbb{P}_{\text{pot}}$ projects onto zero-mean, curl-free
fields in $y$.

\item {$\mathcal{P}_0$ (The Microscopic Kernel).} The orthogonal
  complement of $\Pi_{\grad}$. It projects onto
  $\ker(\mathcal{J}_{\micro})$:
  \begin{align}
    \label{eq:82}
  \mathcal{P}_0 U= \begin{pmatrix} \langle u \rangle_y \\ \langle p \rangle_y + \mathbb{P}_{\text{sol}} p \end{pmatrix}.
\end{align}
where $\mathbb{P}_{\text{sol}}$ projects onto zero-mean,
divergence-free fields in $y$.
\item {$\Pi_0$ (The Macroscopic Space).} Defined as
  $\{U \in \mathcal{P}_0 : \mathcal{J}_{\macro} U \in
  \mathcal{P}_0\}$. For
  $\mathcal{J}_{\macro} U = \begin{pmatrix} \nabla_x \cdot p \\
    \nabla_x u \end{pmatrix}$ to remain in $\mathcal{P}_0$, the
  $x$-divergence of $p$ must be independent of $y$, forcing the
  fluctuation part of $p$ to vanish. Thus, $\Pi_0$ is simply the
  $y$-average:
\begin{align}\label{eq:83}
\Pi_0 U = \begin{pmatrix} \langle u \rangle_y \\ \langle p \rangle_y \end{pmatrix}.
\end{align}
\item{$\Pi_{\fluct}$ (The Fluctuation Space).}
  $\Pi_{\fluct} = \mathcal{P}_0 - \Pi_0$. It isolates the
  divergence-free microscopic fluctuations:
\begin{align}\label{eq:84}
\Pi_{\fluct} U = \begin{pmatrix} 0 \\ \mathbb{P}_{\text{sol}} p \end{pmatrix}.
\end{align}
\end{itemize}
Let $U \in \Pi_{\fluct}$. In this concrete example, we see that
the macroscopic derivative of a microscopically divergence-free field
$\mathcal{J}_{\macro}U $ is in $\Pi_{\grad}$ (subspace of
zero $y-$mean in top component and microscopically pure gradient
fields in bottom component). This is the heuristic that motivates
\eqref{eq:65} and \eqref{eq:66}.

\paragraph{The Abstract Corrector $\mathcal{K}_{\micro}$.} From \cref{def:corr-prob}, the abstract corrector problem for a macroscopic function $U_0 = \begin{pmatrix} u_0(x) \\ p_0(x) \end{pmatrix} \in \Pi_0$ and a fluctuation function $w = \begin{pmatrix} 0 \\ w_p(x,y) \end{pmatrix} \in \Pi_{\fluct}$ is:
\begin{align}
  \label{eq:80}
\Pi_{\fluct} \mathcal{M}(U_0 + w) + \alpha w = 0.
\end{align}
On the one hand, recall the unscaled material operator $\mathcal{M} = \begin{pmatrix}
  1 & 0 \\ 0 & a^{-1}(y) \end{pmatrix}$, we get $ \mathcal{M}(U_0 + w) = \begin{pmatrix} u_0 \\ a^{-1}(y)(p_0 + w_p) \end{pmatrix} $.
On the other hand, since $w \in \Pi_{\fluct}$, we already have
$\mathbb{P}_{\text{sol}} w_p = w_p$. Thus, together with
\eqref{eq:84}, we simplify \eqref{eq:80} as
\begin{align}
\label{eq:85}
\mathbb{P}_{\text{sol}} \left[ a^{-1}(y)(p_0 + w_p) + \alpha w_p \right] = 0.
\end{align}
Because the microscopic variable $y$ lives on the flat torus
$\mathbb{T}^d = \RR^d / \mathbb{Z}^d$, the Hilbert space of
vector fields $L^2(\mathbb{T}^d; \RR^d)$ admits the following
Helmholtz-Hodge decomposition on the torus:
\begin{align}\label{eq:86}
 L^2(\mathbb{T}^d; \RR^d) = \text{Grad} \oplus \text{Sol}_0 \oplus \text{Harmonic},
\end{align}
where: 
\begin{itemize}
\item $\text{Grad}$: Pure gradients $\nabla_y \chi(y)$. By periodicity
  and integration by parts, these automatically have zero mean.

\item $\text{Sol}_0$: Zero-mean, divergence-free fields. This
is exactly the range of the projection $\mathbb{P}_{\text{sol}}$.

\item $\text{Harmonic}$: Fields that are both curl-free and
  divergence-free. On the torus $\mathbb{T}^d$, these are exactly the
  spatially constant (in $y$) vector fields $c \in \RR^d$. The
  last statement can be seen by expanding the harmonic field to
  Fourier series and look at the constraints on the modes and
  their corresponding Fourier coefficients.
\end{itemize}
Using \eqref{eq:85}, there exist $\chi$ and $c$ such that:
\begin{align}
  \label{eq:94}
a^{-1}(y)(p_0 + w_p) + \alpha w_p = c(x) + \nabla_y \chi(x,y)
\end{align}
or equivalently,
\begin{align}
\label{eq:87}
  (a^{-1}(y) + \alpha I)(p_0 + w_p)
  = c(x) + \alpha p_0(x) + \nabla_y \chi(x,y).
\end{align}
Define a regularized material coefficient $a_\alpha(y)$ and a shifted
macroscopic constant $\tilde{c}(x)$ as
\begin{align}
\label{eq:88}
  a_\alpha(y) \coloneqq \left( a^{-1}(y) + \alpha I \right)^{-1},
  \quad
  \tilde{c}(x) \coloneqq c(x) + \alpha p_0(x).
\end{align}
Note that as $\alpha \to 0$, $a_\alpha(y) \to a(y)$ and $\tilde{c}(x)
\to c(x).$ Thus \eqref{eq:87} implies
\begin{align}\label{eq:90}
p_0 + w_p = a_\alpha(y) \left( \tilde{c}(x) + \nabla_y \chi(x,y) \right).
\end{align}
Because $w \in \Pi_{\fluct}$ and $p_0(x)$ is independent of $y$,
we have $\nabla_y \cdot w_p = 0 = \nabla_y \cdot p_0$. Taking
$y-$divergence of the above equation yields the regularized cell problem
\begin{align}
\label{eq:89}
\nabla_y \cdot \Big( a_\alpha(y) \big( \tilde{c}(x) + \nabla_y \chi(x,y) \big) \Big) = 0.
\end{align}
This has the same structural form as the classical cell problem
\cite{bensoussanAsymptoticAnalysisPeriodic2011}, and they are exactly
the same if we let $\alpha \to 0$.
By linearity, we write $\chi(x,y) = \chi_j (y) \tilde{c}_j(x)$ and
define $\boldsymbol{\chi} \coloneqq \chi_j e_j$.
To find the effective macroscopic
behavior, we use the second property of $w \in \Pi_{\fluct}$: it has
zero microscopic mean ($\langle w_p \rangle_y = 0$). Taking the
$y$-average of \eqref{eq:90} gives:
\begin{align}\label{eq:15}
p_0 = \langle a_\alpha(y) (I + \nabla_y \boldsymbol{\chi}) \rangle_y \tilde{c}(x).
\end{align}
Thus, we define the regularized homogenized matrix:
\begin{align}
\label{eq:92}
a_{\hom, \alpha} \coloneqq\langle a_\alpha(y) (I + \nabla_y \boldsymbol{\chi}) \rangle_y.
\end{align}
This implies 
\begin{align}
\label{eq:93}
\tilde{c}(x) = a_{\hom, \alpha}^{-1} p_0(x).
\end{align}

\paragraph{The Homogenized Coefficient $\mathcal{M}_0$.}  Finally, we must
construct the homogenized operator matrix $\mathcal{M}_0$. By
\cref{thm:exis-corr},
$\mathcal{M}_0 U_0 = \Pi_0 \mathcal{M}(U_0 + w)$. From \eqref{eq:83},
$\Pi_0$ is simply the $y$-average, thus we have:
\begin{align}
\label{eq:95}
\mathcal{M}_0 U_0 = \begin{pmatrix} u_0 \\ \langle a^{-1}(y)(p_0 + w_p) \rangle_y \end{pmatrix}.
\end{align}
Taking the $y$-average of \eqref{eq:94} and using $\langle \nabla_y
\chi \rangle_y = 0$ (periodic boundaries), $\langle w_p \rangle_y =
0$, and \eqref{eq:87}, we obtain
\begin{align}
\label{eq:96}
  \langle a^{-1}(y)(p_0 + w_p) \rangle_y =c(x)
  = \tilde{c}(x) - \alpha p_0(x)
  = \left( a_{\hom, \alpha}^{-1} - \alpha I \right) p_0(x).
\end{align}
From \eqref{eq:95} and \eqref{eq:96}, the homogenized operator matrix is:
\begin{align}
\label{eq:97}
\mathcal{M}_0 = \begin{pmatrix} 1 & 0 \\ 0 & a_{\hom, \alpha}^{-1} - \alpha I \end{pmatrix}.
\end{align}
As a remark, if we  plug our $\mathcal{M}_0$ into \cref{def:oper-a}, the $-\alpha I$ perfectly cancels the $+\alpha I$ shift:
\begin{align*}
\mathcal{A}_0 = \begin{pmatrix} 0 & \nabla_x \cdot \\ \nabla_x &
                                                                 0 \end{pmatrix} + \begin{pmatrix} 1 & 0 \\ 0 & a_{\hom, \alpha}^{-1} - \alpha I \end{pmatrix} + \begin{pmatrix} \alpha  & 0 \\ 0 & \alpha I \end{pmatrix} = \begin{pmatrix} 1+\alpha & \nabla_x \cdot \\ \nabla_x & a_{\hom, \alpha}^{-1} \end{pmatrix}
\end{align*}
Thus, the regularizer $\alpha$ acts as a mass term to guarantee
invertibility in the abstract Schur complement, and then cancels out
at the macroscopic level, leaving the effective divergence equation
governed only by $a_{\hom, \alpha}^{-1}$.

\paragraph{Convergence Rate.} 
By \cref{thm:quan-homo} and \cref{rem:nons-homo}, 
\begin{align}
\label{eq:2}
  \begin{split}
     &\norm{\left( \mathcal{A}_\varepsilon^{-1} - (I +
  \mathcal{K}_\varepsilon)\mathcal{A}_0^{-1} \right)\Pi_0}_{\mathfrak{H} \to
  \mathfrak{H}}\\
  &\qquad\le   C_1 \varepsilon \left( \norm{(\mathcal{J}_{\micro} - \eta I)^{-1} \bar{\mathcal{V}} \Pi_0}_{\mathfrak{V} \to
    \mathfrak{H}} + \norm{(\mathcal{J}_{\micro} - \eta I)^{-1} \bar{\mathcal{V}} \Pi_0}_{\mathfrak{H} \to
    \mathfrak{V}^{*}} \right).
  \end{split}
\end{align}
where $\eta = -\delta \varepsilon$. By \eqref{eq:110} and
\eqref{eq:124}, $\bar{\mathcal{V}}\Pi_0$ has no component in $\Pi_0
\oplus \Pi_{\fluct} = \mathcal{P}_0 = \ker
(\mathcal{J}_{\micro})$. Thus, it maps entirely into the gradient
space $\Pi_{\grad} = \overline{\ran(\mathcal{J}_{\micro})}$.

Since $\mathcal{J}_{\micro}$ is defined on the torus $\mathbb{T}^d$,
in Fourier domain, its symbol is
\begin{align*}
\widehat{\mathcal{J}}_{\micro}(k) = i \begin{pmatrix} 0 & k^T \\ k & 0 \end{pmatrix}
\end{align*}
where the wave vector $k \in \ZZ^d$ is discrete. Solving for
eigenvalues of $\widehat{\mathcal{J}}_{\micro}$, we obtain 
\begin{align}\label{eq:43}
\sigma \left( \mathcal{J}_{\micro}\right)  = \left\{ 0 \right\} \cup \left\{
  \pm i \abs{k}\colon k \in \ZZ^d \right\}.
\end{align}
Thus $\mathcal{J}_{\micro}$ has a spectral gap
$\lambda_0\coloneqq 1 > 0$. Recall
$\mathcal{P}_0 = \ker \left( \mathcal{J}_{\micro} \right).$ Define
$\tilde{J} \colon \mathfrak{H}/\mathcal{P}_0 \to \mathfrak{H}$ with
$\tilde{J} ([u]) = \mathcal{J}_{\micro} u$, then \eqref{eq:43} implies
that
$\norm{\tilde{J} [u]} \ge \lambda_0
\norm{[u]}_{\mathfrak{H}/\mathcal{P}_0}$. Since $\mathcal{J}_{\micro}$
is closed densely defined on $\mathfrak{H}$, we have $\tilde{J}$ is
also closed densely defined. Thus, by the Closed Range Theorem,
$\ran (\tilde{T})$ is closed, so
$\ran (\mathcal{J}_{\micro}) = \ran (\tilde{J})$ is also
closed. Therefore, $\ran (\mathcal{J}_{\micro}) = \Pi_{\grad}$ and
$\norm{\mathcal{J}_{\micro} u} \ge \lambda_0 \norm{u}$ whenever $u \in
\mathfrak{H} \setminus \mathcal{P}_0 = \Pi_{\grad}$. Therefore,
letting $\delta \to 0$, the inverse operator
$\mathcal{J}_{\micro}^{-1}$ is a bounded, well-defined operator on
$\Pi_{\grad}$, with norm bounded by $1/\lambda_0$.

We now show that
$\norm{\bar{\mathcal{V}}\Pi_0}_{\mathfrak{V} \to \mathfrak{H}}$ and
$\norm{\bar{\mathcal{V}}\Pi_0}_{\mathfrak{H} \to \mathfrak{V}^{*}}$
are also finite. From \eqref{eq:31.2}, every terms in
$\bar{\mathcal{V}} \Pi_0$ are bounded except
$\mathcal{J}_{\macro} \mathcal{K}_{\micro}$. From the Gelfand triple
\eqref{eq:159} and \cref{thm:exis-corr},
$\norm{\mathcal{J}_{\macro} \mathcal{K}_{\micro}}_{\mathfrak{V} \to
  \mathfrak{H}}$ and
$\norm{\mathcal{J}_{\macro} \mathcal{K}_{\micro}}_{\mathfrak{H} \to
  \mathfrak{V}^{*}}$ are finite.

Therefore, taking
$\delta \to 0$, the norms
$\norm{ \mathcal{J}_{\micro}^{-1} \bar{\mathcal{V}}
  \Pi_0}_{\mathfrak{V} \to \mathfrak{H}},~\norm{ \mathcal{J}_{\micro}^{-1} \bar{\mathcal{V}}
  \Pi_0}_{\mathfrak{H} \to \mathfrak{V}^{*}} \le K$ for
some finite constant $K$. We conclude that
\begin{align}
\label{eq:13}
\norm{\left( \mathcal{A}_\varepsilon^{-1} - (I +
  \mathcal{K}_\varepsilon)\mathcal{A}_0^{-1} \right)\Pi_0}_{\mathfrak{H} \to
  \mathfrak{H}} = \mathcal{O}(\varepsilon).
\end{align}
\begin{remark}
\label{rem:geom-spec}
The discussion about $\ran \left( \mathcal{J}_{\micro} \right)$ above
highlights the effects of spatial geometry to homogenization: in
periodic setting, $\sigma \left( \mathcal{J}_{\micro} \right)$ is
a discrete set of pure point eigenvalues; in quasi-periodic setting,
$\sigma \left( \mathcal{J}_{\micro} \right)$ is a dense set of pure
point eigenvalues; and finally, in stochastic setting,
$\sigma \left( \mathcal{J}_{\micro} \right)$ is continuous.
\end{remark}

We now recover the classical convergence rate as presented in
\cite{bensoussanAsymptoticAnalysisPeriodic2011} from \eqref{eq:13} by letting
$\alpha =0$. Indeed, \eqref{eq:88}, \eqref{eq:15}, \eqref{eq:92}, and
\eqref{eq:93} imply
\begin{align*}
 a_{\hom} \coloneqq a_{\hom, 0} = \langle a (y) (I + \nabla_y
  \boldsymbol{\chi}) \rangle_y,
  \quad c(x) = \tilde{c}(x) = a_{\hom}^{-1} p_0(x), \quad \text{ and~ }
   \mathcal{A}_0= \begin{pmatrix} 1 & \nabla_x \cdot \\ \nabla_x & a_{\hom}^{-1} \end{pmatrix}.
\end{align*}
Thus, for $F = \begin{pmatrix} f(x) \\ 0 \end{pmatrix} \in \Pi_0$ and
$U_0 = \begin{pmatrix} u_0 \\ p_0 \end{pmatrix}$ satisfying
$ \mathcal{A}_0 U_0 = F$, then $p_0 = -a_{\hom} \nabla u_0$. On the one hand, \eqref{eq:41} gives $\mathcal{K}_{\micro} U_0 = w
= \begin{pmatrix} 0 \\ w_p(x,y) \end{pmatrix}$. On the other hand,
\eqref{eq:90} implies 
\begin{align*}
p_0(x) + w_p(x,y) = -a(y) \big( I + \nabla_y \boldsymbol{\chi}(y) \big) \nabla u_0(x).
\end{align*}
Therefore, 
\begin{align*}
  (I + \mathcal{K}_{\varepsilon}) U_0
  &= U_0 +  \mathcal{K}_{\varepsilon} U_0 = U_0 +
    \mathcal{S}_\varepsilon \mathcal{K}_{\micro}
    \mathcal{S}_\varepsilon^{-1} U_0 = U_0 +  \mathcal{S}_\varepsilon
    \mathcal{K}_{\micro} U_0\\
  &= \begin{pmatrix} u_0(x) \\ p_0(x) + w_p(x,y+
    x/\varepsilon) \end{pmatrix}
  = \begin{pmatrix} u_0(x) \\ -a(y+x/\varepsilon) \big( I + \nabla_y \boldsymbol{\chi}(y+x/\varepsilon) \big) \nabla u_0(x) \end{pmatrix}
\end{align*}
Let $V_{\varepsilon} = \begin{pmatrix} v_\varepsilon(x,y) \\
  q_{\varepsilon}(x,y)
\end{pmatrix}
$ be the solution of $\mathcal{A}_{\varepsilon} V_{\varepsilon} = F$,
we will establish a relation between $v_{\varepsilon}$ and the solution
$u_{\varepsilon}$ of \eqref{eq:22}. On the one hand, we calculate 
\begin{align*}
  \mathcal{M}_{\varepsilon} V_{\varepsilon}
  &= \mathcal{S}_{\varepsilon} \mathcal{M}
    \mathcal{S}_{\varepsilon}^{-1} V_{\varepsilon}= \mathcal{S}_{\varepsilon} \begin{pmatrix} 1 & 0 \\
  0& a^{-1}(y)
\end{pmatrix}
    \begin{pmatrix} v_\varepsilon \left( x , y -x/\varepsilon\right) \\
      q_{\varepsilon} \left(x,y -x/\varepsilon
      \right)\end{pmatrix}
    = \mathcal{S}_{\varepsilon} \begin{pmatrix} v_\varepsilon \left( x , y -x/\varepsilon\right)\\
     a^{-1}(y) q_{\varepsilon} \left(x, y -x/\varepsilon \right) 
      \end{pmatrix}\\
  &=\begin{pmatrix} v_\varepsilon \left( x , y\right)\\
     a^{-1}\left( y + {x}/{\varepsilon} \right)  q_{\varepsilon} \left(x ,y \right) 
    \end{pmatrix}
\end{align*}
Thus if $V_{\varepsilon}$ is the  solution of $\mathcal{A}_{\varepsilon}
V_{\varepsilon} = F$, then 
\begin{align*}
  v_\varepsilon \left( x , y\right)+\nabla_x \cdot q_{\varepsilon} (x,y)
  &=f(x),\\
  q_{\varepsilon}(x,y)
  &= - a \left( y + {x}/{\varepsilon}
                     \right) \nabla_x v_{\varepsilon} (x,y),
\end{align*}
or 
\begin{align}\label{eq:47}
v_\varepsilon \left( x , y\right)-\nabla_x \cdot a \left( y + {x}/{\varepsilon}
                     \right) \nabla_x v_{\varepsilon} (x,y)
  &=f(x).
\end{align}
On the other hand, \eqref{eq:22} implies 
\begin{align*}
 u_\varepsilon \left( x , y\right) -\nabla_z \cdot a (z/\varepsilon) \nabla_z u_{\varepsilon} (z)
  = f(z).
\end{align*}
Using the change of variable $z = x+\varepsilon y$, we obtain
\begin{align}
\label{eq:48}
-\nabla_x \cdot a \left( y + {x}/{\varepsilon}
                     \right) \nabla_x u_{\varepsilon} (x+\varepsilon y)
  &=f (x+\varepsilon y).
\end{align}
Subtracting \eqref{eq:48} by \eqref{eq:47}, then using ellipticity of
$a$, we obtain for fix $y \in \TT^d$,
\begin{align}
\label{eq:46}
\norm{u_{\varepsilon}(x+\varepsilon y) -
  v_{\varepsilon}(x,y)}_{H^1(\RR^d_x)}
  &\le C\norm{f(x) - f(x-\varepsilon y)}_{H^{-1}(\RR^d_x)}
    \le C\varepsilon \abs{y} \norm{f}_{L^2(\RR^d_x)}.
\end{align}
From \eqref{eq:13}, we obtain 
\begin{align}
  \label{eq:44}
\norm{v_{\varepsilon}(x,y)-u_0(x)}_{L^2(\RR^d_x \times \mathbb{T}^d_y;
  \RR)}
  = \mathcal{O}(\varepsilon),
\end{align}
and 
\begin{align}
\label{eq:45}
\norm{a \left( y + {x}/{\varepsilon}
  \right) \nabla_x v_{\varepsilon} (x,y) - a(y+x/\varepsilon) \big( I +
  \nabla_y \boldsymbol{\chi}(y+x/\varepsilon) \big) \nabla_x u_0(x)}_{L^2(\RR^d_x \times \mathbb{T}^d_y;
  \RR^d)}
  = \mathcal{O}(\varepsilon)
\end{align}

Using triangle inequality, \eqref{eq:46},
Fubini Theorem, and \eqref{eq:44},
\begin{align*}
  &\int_{\RR^d} \int_{\TT^d} \abs{u_{\varepsilon}(x + \varepsilon y)-u_0(x)}^2 dy dx\\
  &\le 2 \int_{\RR^d} \int_{\TT^d} \abs{u_{\varepsilon}(x + \varepsilon
    y) - v_{\varepsilon}(x,y)}^2 dy
    dx
    + 2\int_{\RR^d} \int_{\TT^d} \abs{v_{\varepsilon}(x,y)-u_0(x)}^2 dy
    dx\\
  &\le 2\int_{\RR^d} \int_{\TT^d} \left( C \varepsilon \abs{y} \norm{f}_{L^2
    (\RR^d_x)} \right)^2 dy dx +  2\norm{v_{\varepsilon}(x,y)-u_0(x)}_{L^2(\RR^d_x \times \mathbb{T}^d_y;
    \RR)}^2\\
  &= \mathcal{O}(\varepsilon^2).
\end{align*}
By the change of variable $z = x + \varepsilon y$, this becomes 
\begin{align}\label{eq:49}
\int_{\RR^d} \int_{\TT^d} \abs{u_{\varepsilon}(z)-u_0(z- \varepsilon
  y)}^2 dy dz = \mathcal{O}(\varepsilon^2).
\end{align}
Since $u_0$ is the solution of and elliptic equation with constant
coefficients $-\nabla \cdot a_{\hom}\nabla u_0 = f$, elliptic
regularity implies $u_0 \in H^2(\RR^d)$ and $\norm{u_0}_{H^2(\RR^d)}
\le C \norm{f}_{L^2(\RR^d)}$. By Taylor expansion for $u_0$,
\begin{align}
\label{eq:50}
\int_{\RR^d} \int_{\TT^d} \left( \abs{u_0(z-\varepsilon y) - u(z)}^2
  + \abs{\nabla_z u_0(z-\varepsilon y) - \nabla_zu(z)}^2  \right)dy dz
  \le C\varepsilon^2 \norm{f}_{L^2(\RR^d)}.
\end{align}
From  \eqref{eq:49} and \eqref{eq:50}, we obtain 
\begin{align}
\label{eq:51}
  \norm{u_{\varepsilon} -u_0}_{L^2(\RR^d)}^2
  = \int_{\RR^d} \int_{\TT^d} \abs{u_{\varepsilon}(z) - u_0(z)}^2 dy
  dz = \mathcal{O}(\varepsilon^2).
\end{align}

By ellipticity of $a$, \eqref{eq:45} implies 
\begin{align*}
\norm{ \nabla_x v_{\varepsilon} (x,y) - \big( I +
  \nabla_y \boldsymbol{\chi}(y+x/\varepsilon) \big) \nabla_x u_0(x)}_{L^2(\RR^d_x \times \mathbb{T}^d_y;
  \RR^d)}
  = \mathcal{O}(\varepsilon).
\end{align*}
Arguing as above with the shift $z = x + \varepsilon y$ and
\eqref{eq:50}, we obtain
\begin{align}
\label{eq:14}
\norm{\nabla u_{\varepsilon} - \left( I + \nabla_y \boldsymbol{\chi}\left(
  \frac{\cdot}{\varepsilon} \right)\right) \nabla u_0 }_{L^2(\RR^d;\RR^d)}^2 = \mathcal{O}(\varepsilon^2).
\end{align}

%%%%%%%%%%%%%%%%%%%%%%%%%%%%%%%%%%%%%%%%
\ifshowdetails
%%%%%%%%%%%%%%%%%%%%%%%%%%%%%%%%%%%%%%%%

\paragraph{Extension to Stochastic Setting  and Limitations of
  the Point-free Approach.}
\emph{The point-free approach is built on operator theory and at its
  current state, does not have any mechanisms to capture the
  probabilistic and geometric assumptions.} While we can get away with
the blessing of periodicity, it is natural to expect several input
from the spatial theories mentioned in the introduction to obtain the
sharp convergence rates in stochastic setting, which depend heavily on
the dimension and mixing properties of the probability space. For the
set up of the probability space $(\Omega,\Prob)$, we refer to
\cite{jikovHomogenizationDifferentialOperators1994,armstrongQuantitativeStochasticHomogenization2019}.

For the convergence rates, the key, which is also \emph{the hardest
  part}, is to establish the following density of states near zero:
\begin{align}
\label{eq:163}
 d \norm{E_\lambda \bar{\mathcal{V}} \Pi_0
  u}^2 \sim \abs{\lambda}^{d-1} d\lambda
  \quad \text{ as } \lambda \to 0, \quad \norm{u}_{\mathfrak{V}} = 1,
\end{align}
and a similar approximation for the other norm. After \eqref{eq:163}
is established, \cref{thm:quan-homo} and \cref{pro:freq-cut} give the
convergence rate as $\varepsilon \I + \varepsilon \I^{*} \sim
2 \varepsilon \I$. From \cref{pro:freq-cut},
\begin{align}\label{eq:166}
\I \sim \int_{0}^{\lambda_0} \frac{\abs{\lambda}^{d-1}}{\lambda^2
  + (\delta \varepsilon)^2} d\lambda
  \sim (\delta \varepsilon)^{d - 2} \int_{0}^{s_0} \frac{|s|^{d-1}}{s^2 + 1} ds
\end{align}
where we use the scaling substitution
$\lambda = (\delta \varepsilon) s$. We evaluate $\I$ dimension by dimension:

\begin{itemize}
\item Dimension $d = 1$: The integral
  $\I \sim (\delta \varepsilon)^{-1} \int \frac{1}{s^2+1} ds \sim
  \mathcal{O}(\varepsilon^{-1})$.  The rate is
  $\varepsilon \sqrt{\varepsilon^{-1}} =
  \mathcal{O}(\varepsilon^{1/2})$.

\item Dimension $d = 2$: The prefactor
  $(\delta \varepsilon)^{d-2} = 1$. The integral becomes
  $\int \frac{|s|}{s^2+1} ds$, which diverges logarithmically with the
  upper bound $s_0 = \lambda_0 / (\delta \varepsilon)$. Thus,
  $\I \sim \ln(1/\varepsilon)$.  The rate is
  $\varepsilon \sqrt{\ln(1/\varepsilon)} = \mathcal{O}(\varepsilon
  \sqrt{\ln(1/\varepsilon)})$.

\item Dimension $d \ge 3$: The integral
  $\int \frac{\abs{\lambda}^{d-1}}{\lambda^2 + (\delta \varepsilon)^2}
  d\lambda$ is finite and bounded by a constant as
  $\varepsilon \to 0$, because the density of states $\abs{\lambda}^{d-1}$
  decays fast enough to suppress the $1/\lambda^2$ singularity. Thus,
  $\I = \mathcal{O}(1)$.  The rate is
  $\varepsilon \sqrt{\mathcal{O}(1)} = \mathcal{O}(\varepsilon)$.
\end{itemize}

Clearly, for whatever mixing condition imposes on the probability
space $(\Omega,\Prob)$, as long as \eqref{eq:163} is satisfied, the
calculation of convergence rates holds. It is illuminating to
\emph{informally} derive \eqref{eq:163} via Wiener-Khinchin Theorem,
which reveals the key lies in estimating the autocovariance function
of the residual field $F_{\sta}$ defined in \cref{def:resi-fiel}.  We
now outline how to get \eqref{eq:163} rigorously with the
concentration for sums ($\CFS$) of Armstrong et al \cite[Chapter
3]{armstrongEllipticHomogenizationQualitative2022}, which, to the best
of our knowledge, is the broadest mixing
condition. % As Armstrong and Kuusi
% \cite{armstrongEllipticHomogenizationQualitative2022} eloquently put
% it: ``\emph{Selecting a mixing condition to work under is a delicate
%   and unpleasant business, and not a decision to be taken
%   lightly. [...] Choosing a particular assumption or class of
%   assumptions can seem undesirably ad hoc. Some readers may
%   incorrectly infer that our arguments cannot handle some other mixing
%   assumption, which they may find particularly important. Meanwhile,
%   from an expository point of view, there is nothing more distracting
%   than engaging in a game of whack-a-mole by trying to handle
%   different mixing conditions on a case-by-case basis. This adds very
%   little of value.}''

Recall that $F_{\sta} = \bar{\mathcal{V}} \Pi_0$, thus
$d \norm{E_\lambda \bar{\mathcal{V}} \Pi_0 u}^2 = d \norm{E_\lambda
  F_{\sta}u}^2$. Using \cref{pro:resi-fiel} and the concrete
regularized cell problem \eqref{eq:89}, estimating
$d \norm{E_\lambda F_{\sta}u}^2$ in the frequency space translates to
variance decay rate of the gradient of the cell solution in spatial
space. In particular, we aim to establish
\begin{align}
\label{eq:168}
\mathbb{E}\left[ \abs{\left\langle \nabla \chi
  \right\rangle_{\square_R}}^2 \right] \le C R^{-d}.
\end{align}
This can be done by applying the celebrated subadditivity of energy
quantities and renormalization group arguments built by Armstrong et al
\cite[Chapter 4]{armstrongQuantitativeStochasticHomogenization2019},
see also \cite{armstrongAdditiveStructureElliptic2017}, and
\cite[Chapter 3]{armstrongEllipticHomogenizationQualitative2024} for a
detail exposition on mixing conditions.

\section{Application: the Theory of Topological Anderson Insulators}
\label{sec:topo-ande}
Topological insulators are quantum materials with promising
applications in quantum computing and electronic devices
\cite{bernevigTopologicalInsulatorsTopological2013,keimerPhysicsQuantumMaterials2017,balContinuousTopologicalInsulators2026}. In
\cite{balTopologicalAndersonInsulators2024,grothTheoryTopologicalAnderson2009},
the authors show theoretically and experimentally that topological
insulators can emerge from trivial insulators under appropriate
perturbations. This metamaterial is known as topological Anderson
insulator. One of the key ingredient in the proof of topological
Anderson insulator \cite{balTopologicalAndersonInsulators2024} is the
norm resolvent convergence of the perturbed operator to the effective
operator with rate $\mathcal{O}(\varepsilon^{\gamma})$, $\gamma >
0$. This guarantees the stability argument and the index machinery
built by Bal et al
\cite{quinnAsymmetricTransportMagnetic2024,balMathematicalModelsTopologically2023a,balSemiclassicalPropagationCurved2022,balTopologicalInvariantsInterface2022,balTopologicalChargeConservation2023,balContinuousBulkInterface2019}
run flawlessly, proving the emergence of the new phase of quantum
matter.

We will recover and extend the convergence rate in
\cite{balTopologicalAndersonInsulators2024} via point-free approach.
The perturbed medium is modeled by a modified Dirac operator, defined as follows.  Let $\varepsilon >
0$. We consider the following Hamiltonian on
$L^2(\RR^2) \otimes \CC^2 \cong L^2(\RR^2;\CC^2)$:
\begin{align}
\label{eq:1o}
  H_{\varepsilon} = \left( D + (U_1^{\varepsilon}, U_2^{\varepsilon})
  \right) \cdot \sigma + (m + \beta \Delta + U_3^{\varepsilon})
  \sigma_3 + U_0^{\varepsilon} \sigma_0,
\end{align}
where $m,\beta$ are real numbers with $|m|, |\beta|$ in $(0,\infty)$,  $\left\{ \sigma_i \right\}$ are Pauli matrices
\begin{align*}
  \sigma_1
  &\coloneqq
    \begin{pmatrix}
      0 &1\\
      1 &0
    \end{pmatrix}, \quad
    \sigma_2
  \coloneqq
    \begin{pmatrix}
      0 &-i\\
      i &0
    \end{pmatrix}, \quad
    \sigma_3
    \coloneqq
    \begin{pmatrix}
      1 &0\\
      0 &-1
    \end{pmatrix}, \quad
    \sigma_0
    \coloneqq \text{Id},\\
  x
  &\coloneqq (x_1, x_2) \in \RR^2,~
  D
  \coloneqq  \frac{1}{i} \left( \partial_{x_1}, \partial_{x_2}
      \right),~
  \sigma
  \coloneqq \left( \sigma_1, \sigma_2 \right),~\Delta \coloneqq
    \partial_{x_1}^2 + \partial_{x_2}^2,
\end{align*}
and 
\begin{align}
\label{eq:4o}
  U^{\varepsilon}_j (x) \coloneqq \frac{1}{\varepsilon} \rho (x)
  V_j^{\varepsilon}(x), \quad V_j^{\varepsilon}(x) \coloneqq V_j \left( \frac{x}{\varepsilon} \right),
\end{align}
with $\rho \in  W^{2,\infty}(\RR^2;\RR)$ and $V_j\in
C^{0,\alpha}_{\per}(Y;\RR) $ for some $ \alpha \in (0,1),~ j \in
\left\{ 0, 1,2,3\right\}.$

We observe that
\begin{align*}
  U^{\varepsilon}
  \coloneqq U^{\varepsilon}_1 \sigma_1 + U^{\varepsilon}_2 \sigma_2
  + U^{\varepsilon}_3 \sigma_3 + U^{\varepsilon}_0 \sigma_0
  =
 \frac{1}{\varepsilon}\rho (x) \begin{pmatrix}
    V_0^{\varepsilon} + V_3^{\varepsilon} & V_1^{\varepsilon} - i
                                            V_2^{\varepsilon} \\
    V_1^{\varepsilon} + i V_2^{\varepsilon} & V_0^{\varepsilon} - V_3^{\varepsilon}
  \end{pmatrix}.
\end{align*}
Leting 
\begin{align}
  \label{eq:60o}
W \coloneqq \begin{pmatrix}
    V_0 + V_3 & V_1 - i V_2 \\
    V_1 + i V_2 & V_0 - V_3
  \end{pmatrix},% ~\qquad W^{\varepsilon}(x) \coloneqq W \left( \frac{x}{\varepsilon} \right),
\end{align}
we rewrite \eqref{eq:1o} as
\begin{align}
  \label{eq:1oo}
  H_{\varepsilon}
  = D \cdot \sigma + \left( m + \beta \Delta \right) \sigma_3 +
  \frac{1}{\varepsilon} \rho (x) W \left( \frac{x}{\varepsilon} \right).
\end{align}

A key feature of the Hamiltonian $H_{\varepsilon}$ is that it contains
singular potentials $\frac{1}{\varepsilon} \rho (x) W \left( \frac{x}{\varepsilon} \right)$. Homogenization problems with similar potentials were studied in
\cite{bensoussanAsymptoticAnalysisPeriodic2011,zhangHomogenizationSchrodingerEquation2014,balLimitingModelsEquations2015,balHomogenizationLargeSpatial2010,cancesSecondorderHomogenizationPeriodic2023,goudeyLinearEllipticHomogenization2024,allaireHomogenizationSchrodingerEquation2005,ducheneScatteringHomogenizationInterface2011,cherednichenkoResolventEstimatesHighcontrast2016,zhikovOperatorEstimatesHomogenization2016,cooperUniformAsymptoticsFamily2023}. The
norm resolvent convergence of fine-scale divergence operators is
studied in \cite{zhikovOperatorEstimatesHomogenization2016}, and of
Schr\"{o}dinger operators with oscillating potential is studied in
\cite{cancesSecondorderHomogenizationPeriodic2023}.

\paragraph{The Abstract First Order System.}
We write the perturbed Dirac equation $H_\varepsilon u = f$ as
\begin{align}
\label{eq:169}
(D \cdot \sigma + \beta \Delta \sigma_3 + m \sigma_3 +
  U^\varepsilon(x)) u
  = f
\end{align}
where $D = -i \nabla_x$. To make the differential structure skew-adjoint, we multiply \eqref{eq:169} by $i$:
\begin{align}
\label{eq:170}
 (\sigma \cdot \nabla_x + i \beta \sigma_3 \Delta + i m \sigma_3 + i
  U^\varepsilon(x)) u
  = i f
\end{align}
To cancel $\varepsilon^{-1}$ in the singular potential
$U^\varepsilon(x) = \frac{1}{\varepsilon}\rho(x)W(x/\varepsilon)$, we
rewrite it using the auxiliary problem $\Delta_y \varphi = W$ and let
$\Phi_k(y) = \partial_{y_k} \varphi(y)$. Define the following matrices:
\begin{align}
\label{eq:171}
  \begin{split}
    \Psi_k^\varepsilon(x)
    &\coloneqq \rho(x) \Phi_k(x/\varepsilon), \\
    \tilde{U}^\varepsilon(x)
    &\coloneqq \sum_{k=1}^2 \partial_{x_k} \rho(x) \Phi_k(x/\varepsilon).
  \end{split}
\end{align}
By the product rule,
\begin{align*}
  U^\varepsilon u
  = \sum_{k=1}^2 \partial_{x_k}(\Psi_k^\varepsilon u) - \sum_{k=1}^2 \Psi_k^\varepsilon \partial_{x_k} u - \tilde{U}^\varepsilon u
\end{align*}
To eliminate the second-order derivatives and the $\partial_{x_k}(\Psi_k^\varepsilon u)$ terms, we introduce
\begin{align*}
  q_k
  \coloneqq i \beta \sigma_3 \partial_{x_k} u + i \Psi_k^\varepsilon u, \quad k \in \{1, 2\}
\end{align*}
so $\partial_{x_k} u = -i \beta^{-1} \sigma_3 q_k - \beta^{-1}
\sigma_3 \Psi_k^\varepsilon u$. Substituting this into the Dirac
equation \eqref{eq:170} yields a  first-order system 
\begin{align}
\label{eq:172}
\mathcal{J}V + \mathcal{M}_\varepsilon V = F
\end{align}
where: 
\begin{itemize}
\item the state vector  $V = (u, q_1, q_2)^T \in \mathfrak{H} \coloneqq L^2(\mathbb{R}^2 \times \TT^2;\CC^6)$; 
\item the source  $F = (if, 0, 0)^T$;
\item the differential operator 
\begin{align}
\label{eq:173}
\mathcal{J} = \begin{pmatrix} \sigma \cdot \nabla & \partial_{x_1} & \partial_{x_2} \\ \partial_{x_1} & 0 & 0 \\ \partial_{x_2} & 0 & 0 \end{pmatrix};
\end{align}
\item the unscaled material operator 
\begin{align}
\label{eq:174}
  \mathcal{M}
  = \begin{pmatrix} i (m \sigma_3 - \tilde{U} + \beta^{-1} \sum_k \Psi_k \sigma_3 \Psi_k) & - \beta^{-1} \Psi_1 \sigma_3 & - \beta^{-1} \Psi_2 \sigma_3 \\ \beta^{-1} \sigma_3 \Psi_1 & i \beta^{-1} \sigma_3 & 0 \\ \beta^{-1} \sigma_3 \Psi_2 & 0 & i \beta^{-1} \sigma_3 \end{pmatrix};
\end{align}
\end{itemize}

Let the scaling operator
$(\mathcal{S}_\varepsilon V)(x,y) \coloneqq V(x, y +
x/\varepsilon)$. Because Pauli matrices are Hermitian,
$(\sigma \cdot \nabla)^* = -\sigma \cdot \nabla$. Thus
$\mathcal{J}^* = -\mathcal{J}$, satisfying
\ref{H:diff-stru}. Furthermore, conjugating by the scaling operator
splits the derivative
$\mathcal{S}_\varepsilon^{-1} \mathcal{J} \mathcal{S}_\varepsilon =
\mathcal{J}_{\macro} + \frac{1}{\varepsilon}
\mathcal{J}_{\micro}$, satisfying \ref{H:scal-stru}.

From \eqref{eq:60o}, $W$ is Hermitian, so $\Psi_k$ and $\tilde{U}$ are
Hermitian matrices. From \eqref{eq:174}, $\mathcal{M}^* =
-\mathcal{M}$. Since $\mathcal{M}$ is skew-Hermitian, it is accretive
with $\Re \langle \mathcal{M}V, V \rangle = 0$. Thus $ \mathcal{M} +
\alpha I$ is strictly accretive, satisfying
\ref{H:mate-prop-H}. Therefore, $(\mathfrak{H}, \mathcal{J},
\mathcal{M}, \{\mathcal{S}_{\varepsilon}\}_{\varepsilon > 0})$ forms a
homogenization flow.

\paragraph{The Projections and the Abstract Corrector.}
The microscopic operator is:
\begin{align*}
\mathcal{J}_{\micro} = \begin{pmatrix} \sigma \cdot \nabla_y & \partial_{y_1} & \partial_{y_2} \\ \partial_{y_1} & 0 & 0 \\ \partial_{y_2} & 0 & 0 \end{pmatrix}.
\end{align*}
We have: 
\begin{itemize}
\item The microscopic kernel $\mathcal{P}_0 =
  \ker(\mathcal{J}_{\micro})$: states $(u, q)$ where $u$ is
  independent of $y$ and $\nabla_y \cdot q = 0$. 
\item The macroscopic space $\Pi_0$: the $y$-average.
\item The fluctuation space $\Pi_{\fluct} = \mathcal{P}_0
  \ominus \Pi_0$: the divergence-free, zero-mean microscopic fluctuations of the flux $q$.
\end{itemize}

Let $U_0 = (u_0(x), q_{10}(x), q_{20}(x))^T \in \Pi_0$. The abstract
corrector $w = (0, w_{q1}, w_{q2})^T \in \Pi_{\fluct}$ solves 
\begin{align}
\label{eq:16}
\Pi_{\fluct} \mathcal{M} (U_0 + w) + \alpha w = 0.
\end{align}
Note that applying the projection $\Pi_{\fluct}$ to the $q_k$
components of $\mathcal{M}(U_0 + w)$ amounts to applying the Leray
projection $\mathbb{P}_{\text{sol}}$ (which projects onto
divergence-free fields in $y$). Thus \eqref{eq:16} becomes
\begin{align}
\label{eq:21}
\beta^{-1} \sigma_3 \mathbb{P}_{\text{sol}}[\Psi_k] u_0 + (i \beta^{-1} \sigma_3 + \alpha I) w_{qk} = 0.
\end{align}
 Recall that $\Psi_k(x,y) = \rho(x) \partial_{y_k}
 \varphi(y)$. Because $\Psi_k$ is a pure gradient in $y$, its Leray
 projection is exactly zero: $\mathbb{P}_{\text{sol}}[\Psi_k] = 0$.
Consequently, the corrector vanishes $ 0 = w = \mathcal{K}_{\micro}
U_0$. Thus, $\mathcal{K}_{\micro} = 0$.

\paragraph{The Homogenized Operator and Effective Mass.}

Since $\mathcal{K}_{\micro} = 0$, the homogenized operator
$\mathcal{M}_0  = \Pi_0 \mathcal{M} \Pi_0$.  Since $\Phi_k$ is a
gradient, its $y$-average over $\TT^d$ is zero, meaning
$\langle \tilde{U} \rangle_y = 0$ and $\langle \Psi_k \rangle_y =
0$. Thus, the off-diagonal coupling blocks in $\mathcal{M}$ vanish
upon averaging.

The only surviving fluctuation term is the quadratic $\Psi_k \sigma_3 \Psi_k$ term in the top-left block. The effective material operator becomes:
$$ \mathcal{M}_0 = \begin{pmatrix} i M_{\text{eff}} & 0 & 0 \\ 0 & i \beta^{-1} \sigma_3 & 0 \\ 0 & 0 & i \beta^{-1} \sigma_3 \end{pmatrix} $$
where the effective mass matrix is:
$$ M_{\text{eff}}(x) = m \sigma_3 + \frac{\rho^2(x)}{\beta} \sum_{k=1}^2 \langle \Phi_k \sigma_3 \Phi_k \rangle_y $$

Finally, we substitute $\mathcal{M}_0$ back into the  effective
equation $\mathcal{J}_{\macro} U_0 + \mathcal{M}_0 U_0 = F$, then
solve for $q_{k0} = i \beta \sigma_3 \partial_{x_k} u_0$, $k \in \{1,2\}$. Substituting this into the first row and dividing by $i$, we recover the effective Dirac equation
\begin{align}
\label{eq:32}
  (D \cdot \sigma + \beta \Delta \sigma_3 + M_{\text{eff}}(x)) u_0
  = f
\end{align}
Define the effective Hamiltonian 
\begin{align}
\label{eq:33}
  H_{\text{eff}}
  \coloneqq D \cdot \sigma + \beta \Delta \sigma_3 + M_{\text{eff}}(x).
\end{align}

Because $\mathcal{J}_{\micro}$ has a discrete spectrum with a spectral
gap on the torus $\TT^2$, the same argument in \cref{sec:application}
applies, thus we obtain the convergence rate
\begin{align}
\label{eq:34}
 \norm{(H_\varepsilon - zI)^{-1} - (H_{\text{eff}} - zI)^{-1}}_{L^2 \to L^2} \le C \varepsilon,
\end{align}
for all $z \in \CC \setminus \RR$.

% \section{Application: Hybrid String and Springs}
% \label{sec:hybr-stsp}

%%%%%%%%%%%%%%%%%%%%%%%%%%%%%%%%%%%%%%%%
\fi
%%%%%%%%%%%%%%%%%%%%%%%%%%%%%%%%%%%%%%%%
\section{Proof of the  Results}
\label{sec:proof-main-results}
\begin{proof}[Proof of \cref{lem:inve-coer}] Since the shift $\alpha
  I$ only changes the bounds of $\mathcal{A}_0$ and
  $\mathcal{A}_{\varepsilon}$ a factor $\alpha$, without loss of
  generality, we assume $\alpha = 0$ and $m > 0$.
  \begin{enumerate}[wide] 
  \item We will drop the subscripts and prove it for a generic
    operator $\mathcal{A} = \mathcal{M} + \mathcal{J}$, where
    $\mathcal{M}$ is bounded and accretive with constant
    $m$, and $\mathcal{J}$ is closed densely defined, and
    skew-adjoint.

Because $\mathcal{M}$ is a bounded operator, $\dom(\mathcal{A}) =
\dom(\mathcal{J})$ and $\mathcal{A}$ is closed.

\item We show $\mathcal{A}$ is injective and $\ran(\mathcal{A})$ is closed.
Let $u \in \dom(\mathcal{A})$. Because $\mathcal{J}$ is skew-adjoint 
\begin{align*}
\langle \mathcal{J}u, u \rangle = \langle u, \mathcal{J}^* u \rangle = \langle u, -\mathcal{J}u \rangle = -\overline{\langle \mathcal{J}u, u \rangle}
\end{align*}
Thus,  $\Re\langle \mathcal{J}u, u \rangle = 0$. Therefore, by
\eqref{eq:9} and \eqref{eq:17},
\begin{align*}
  \Re\langle \mathcal{A}u, u \rangle
  &= \Re\langle \mathcal{M}u, u \rangle + \Re\langle \mathcal{J}u, u
    \rangle
    = \Re\langle \mathcal{M}u, u \rangle + 0
     \ge m\|u\|^2.
\end{align*}
By the Cauchy-Schwarz inequality, $\Re\langle \mathcal{A}u, u \rangle \le |\langle \mathcal{A}u, u \rangle| \le \|\mathcal{A}u\|\|u\|$. Thus,
\begin{align}
\label{eq:18}
\|\mathcal{A}u\| \ge m\|u\| \quad \forall u \in \dom(\mathcal{A}).
\end{align}
We conclude that $\mathcal{A}$ is injective.

Let $v_n \in \ran (\mathcal{A})$ such that $v_n \to v$ in
$\mathfrak{H}$. By our choice, $v_n = \mathcal{A} u_n$ for some $u_n
\in \mathfrak{H}$. Using \eqref{eq:18}, 
\begin{align*}
  \norm{v_n - v}
  =\norm{\mathcal{A} u_n - \mathcal{A} u}
  \ge m \norm{u_n -u}
\end{align*}
which implies $u_n \to u$. Due to closedness of $\mathcal{A}$ and
$\mathcal{A} u_n \to v$, we conclude that $u \in \dom (\mathcal{A})$
and $v = \mathcal{A} u$, or $v \in \ran(\mathcal{A})$. Therefore,
$\ran(\mathcal{A})$ is closed.

\item To show $\mathcal{A}$ is invertible on $\mathcal{H}$, we prove
  $\mathcal{A}$ is surjective, or $\ran(\mathcal{A}) =
  \mathcal{H}$. Since $\ran (\mathcal{A})$ is closed, it suffices to show that $\ran(\mathcal{A})^\perp = \{0\}$.

Let $v \in \ran(\mathcal{A})^\perp$. By definition, $\langle \mathcal{A}u, v \rangle = 0$ for all $u \in \dom(\mathcal{A})$. 
This implies $v \in \dom(\mathcal{A}^*)$ and $\mathcal{A}^* v = 0$.  Since $\mathcal{M}$ is bounded and accretive, and $\mathcal{J}$ is closed and skew-adjoint,
\begin{align*}
 \mathcal{A}^{*} = (\mathcal{M} + \mathcal{J})^* = \mathcal{M}^* + \mathcal{J}^{*} = \mathcal{M}^{*} - \mathcal{J}.
\end{align*}
Therefore,
\begin{align*}
 0 = \Re \langle \mathcal{A}^*v, v \rangle
  = \Re\langle \mathcal{M}^{*}v, v \rangle - \Re \langle \mathcal{J}v, v
  \rangle = \Re\langle \mathcal{M}v, v \rangle
  \ge m\|v\|^2 + 0
\end{align*}
This implies $\|v\| = 0$, so $v = 0$. Thus, $\ran(\mathcal{A})^\perp = \{0\}$, meaning $\mathcal{A}$ is surjective.

\item Since $\mathcal{A}$ is a bijection from $\dom(\mathcal{A})$ to $\mathcal{H}$, the inverse operator $\mathcal{A}^{-1} : \mathcal{H} \to \dom(\mathcal{A})$ exists. 
Let $f \in \mathcal{H}$. Then there exists a unique $u \in
\dom(\mathcal{A})$ such that $\mathcal{A}u = f$, which means $u =
\mathcal{A}^{-1}f$. Using \eqref{eq:18},
\begin{align*}
 \norm{f} \ge m\|\mathcal{A}^{-1}f\|  \qquad \text{ or } \qquad \|\mathcal{A}^{-1}f\| \le \frac{1}{m}\norm{f},
\end{align*}
which implies \eqref{eq:19}.
\item We now prove \eqref{eq:162}. Let $f \in \mathfrak{H}$ and $u =
  \mathcal{A}_\varepsilon^{-1} f$, so $\mathcal{J}u = f - (\mathcal{M}_\varepsilon + \alpha I)u$, hence
\begin{align*}
 \norm{\mathcal{J}u} \le \norm{f} + (M+\alpha)\norm{u} \le \left(1 + \frac{M+\alpha}{m+\alpha}\right)\norm{f}.
\end{align*}
Using the graph norm \eqref{eq:161}, we obtain
$\norm{u}_\mathfrak{V} \le C_\mathfrak{V}
\norm{f}$. Thus,
$\norm{\mathcal{A}_\varepsilon^{-1}}_{\mathfrak{H} \to \mathfrak{V}}
\le C_\mathfrak{V}$.  By the same argument applied to the adjoint
$\mathcal{A}_\varepsilon^* = -\mathcal{J} + \mathcal{M}_\varepsilon^*
+ \alpha I$, we have
$\norm{(\mathcal{A}_\varepsilon^*)^{-1}}_{\mathfrak{H} \to
  \mathfrak{V}} \le C_\mathfrak{V}$. Taking the dual yields
$\|\mathcal{A}_\varepsilon^{-1}\|_{\mathfrak{V}^* \to \mathfrak{H}}
\le C_\mathfrak{V}$. The same argument applies to $\mathcal{A}_0^{-1}$.
  \end{enumerate}
\end{proof}

\begin{proof}[Proof of \cref{thm:rate}]
  \begin{enumerate}[wide]
  \item By the resolvent identity and \eqref{eq:11}: 
  \begin{align}
  \label{eq:11c}
    \begin{split}
      \mathcal{A}_\varepsilon^{-1} - \mathcal{A}_0^{-1}
      &= \mathcal{A}_\varepsilon^{-1}\left( \mathcal{A}_0 -
        \mathcal{A}_{\varepsilon} \right)\mathcal{A}_0^{-1}\\
      &=\mathcal{A}_\varepsilon^{-1} \mathcal{V}_{\varepsilon} \mathcal{A}_0^{-1}\\
      &= \mathcal{A}_\varepsilon^{-1} \mathcal{V}_{\varepsilon} \Pi_0
        \mathcal{A}_0^{-1} + \mathcal{A}_\varepsilon^{-1} \mathcal{V}_{\varepsilon}
        \Pi_\perp \mathcal{A}_0^{-1}\\
      &= \mathcal{A}_\varepsilon^{-1} \left( [\mathcal{J}, \mathcal{B}_{\varepsilon}] + \delta \mathcal{B}_{\varepsilon} + \mathcal{R}_{\varepsilon} \right) 
        \mathcal{A}_0^{-1} + \mathcal{A}_\varepsilon^{-1} \mathcal{V}_{\varepsilon}
        \Pi_\perp \mathcal{A}_0^{-1}\\
      &= \mathcal{A}_\varepsilon^{-1}  [\mathcal{J},
        \mathcal{B}_{\varepsilon}] \mathcal{A}_0^{-1}
        + \delta \mathcal{A}_\varepsilon^{-1}
        \mathcal{B}_{\varepsilon}  \mathcal{A}_0^{-1}
        +\mathcal{A}_\varepsilon^{-1} \mathcal{R}_{\varepsilon}   \mathcal{A}_0^{-1} 
        + \mathcal{A}_\varepsilon^{-1} \mathcal{V}_{\varepsilon}
        \Pi_\perp \mathcal{A}_0^{-1}.
    \end{split}
  \end{align}
Thus by \cref{lem:inve-coer},
\begin{align}
\label{eq:117}
  \begin{split}
     \norm{\mathcal{A}_\varepsilon^{-1} - \mathcal{A}_0^{-1}}_{\mathfrak{H}
    \to \mathfrak{H}}
  &\le \norm{\mathcal{A}_\varepsilon^{-1}  [\mathcal{J},
        \mathcal{B}_{\varepsilon}] \mathcal{A}_0^{-1}}_{\mathfrak{H}
    \to \mathfrak{H}}\\
    &{}\quad{}+{}\frac{\delta}{2 (m+\alpha)} C_{\mathfrak{V}}\left(
    \norm{\mathcal{B}_\varepsilon}_{\mathfrak{V} \to \mathfrak{H}} + 
    \norm{\mathcal{B}_\varepsilon}_{\mathfrak{H} \to \mathfrak{V}^*} \right)\\
     &{}\qquad{}+{}
        \frac{1}{2(m+\alpha)} C_{\mathfrak{V}}\left(
    \norm{\mathcal{R}_\varepsilon}_{\mathfrak{V} \to \mathfrak{H}} + 
    \norm{\mathcal{R}_\varepsilon}_{\mathfrak{H} \to \mathfrak{V}^*}
       \right)\\
    &{}\qquad{}+{} \frac{2M}{m +\alpha} \norm{\Pi_{\perp} \mathcal{A}_0^{-1}}_{\mathfrak{H}
    \to \mathfrak{H}}.
  \end{split}
\end{align}
\item We only need to estimate
  $\norm{\mathcal{A}_\varepsilon^{-1} [\mathcal{J},
    \mathcal{B}_{\varepsilon}] \mathcal{A}_0^{-1}}_{\mathfrak{H}
    \to \mathfrak{H}}.$
  From \cref{def:oper-a}, we have $\mathcal{J} =
  \mathcal{A}_\varepsilon - \mathcal{M}_\varepsilon - \alpha I$  and
  $\mathcal{J} = \mathcal{A}_0 - \mathcal{M}_0 - \alpha I$, so 
  \begin{align}
  \label{eq:118}
    \begin{split}
      \mathcal{A}_\varepsilon^{-1} \mathcal{J}
      \mathcal{B}_{\varepsilon} \mathcal{A}_0^{-1}
      &= \mathcal{B}_{\varepsilon} \mathcal{A}_0^{-1} -
        \mathcal{A}_\varepsilon^{-1}(\mathcal{M}_\varepsilon + \alpha
        I)\mathcal{B}_{\varepsilon} \mathcal{A}_0^{-1},\\
      -\mathcal{A}_\varepsilon^{-1} \mathcal{B}_{\varepsilon}
      \mathcal{J} \mathcal{A}_0^{-1}
      &= -\mathcal{A}_\varepsilon^{-1} \mathcal{B}_{\varepsilon} + \mathcal{A}_\varepsilon^{-1}\mathcal{B}_{\varepsilon}(\mathcal{M}_0 + \alpha I)\mathcal{A}_0^{-1}.
    \end{split}
  \end{align}
  Notice that the unbounded operator $\mathcal{J}$ has completely
  vanished.  We bound the terms in \eqref{eq:118} using
  \cref{lem:inve-coer}:
  \begin{align*}
  \norm{\mathcal{B}_\varepsilon \mathcal{A}_0^{-1}}_{\mathfrak{H}
    \to \mathfrak{H}}
    &\le \norm{\mathcal{B}_\varepsilon}_{\mathfrak{V} \to
   \mathfrak{H}} \norm{\mathcal{A}_0^{-1}}_{\mathfrak{H} \to
      \mathfrak{V}}\\
    &\le C_\mathfrak{V}
      \norm{\mathcal{B}_\varepsilon}_{\mathfrak{V} \to \mathfrak{H}},\\
   \norm{\mathcal{A}_\varepsilon^{-1} \mathcal{B}_\varepsilon}_{\mathfrak{H} \to \mathfrak{H}}
    &\le
 \norm{\mathcal{A}_\varepsilon^{-1}}_{\mathfrak{V}^* \to \mathfrak{H}}
      \norm{\mathcal{B}_\varepsilon}_{\mathfrak{H} \to \mathfrak{V}^*}\\
    &\le
 C_\mathfrak{V} \norm{\mathcal{B}_\varepsilon}_{\mathfrak{H} \to
      \mathfrak{V}^*},\\
    \norm{\mathcal{A}_\varepsilon^{-1}(\mathcal{M}_\varepsilon + \alpha
 I)\mathcal{B}_\varepsilon \mathcal{A}_0^{-1}}_{\mathfrak{H} \to
    \mathfrak{H}}
    &\le \norm{\mathcal{A}_\varepsilon^{-1}}_{\mathfrak{H}
   \to \mathfrak{H}} \norm{\mathcal{M}_\varepsilon + \alpha
 I}_{\mathfrak{H} \to \mathfrak{H}}
 \norm{\mathcal{B}_\varepsilon}_{\mathfrak{V} \to \mathfrak{H}}
      \norm{\mathcal{A}_0^{-1}}_{\mathfrak{H} \to \mathfrak{V}}\\
    &\le
 \frac{M+\alpha}{m+\alpha} C_\mathfrak{V}
      \norm{\mathcal{B}_\varepsilon}_{\mathfrak{V} \to \mathfrak{H}},\\
    \norm{ \mathcal{A}_\varepsilon^{-1} \mathcal{B}_\varepsilon
    (\mathcal{M}_0 + \alpha I) \mathcal{A}_0^{-1}}_{\mathfrak{H} \to
    \mathfrak{H}}
    &\le \norm{\mathcal{A}_\varepsilon^{-1}}_{\mathfrak{V}^* \to
      \mathfrak{H}} \norm{\mathcal{B}_\varepsilon}_{\mathfrak{H} \to
      \mathfrak{V}^*} \norm{\mathcal{M}_0 + \alpha I}_{\mathfrak{H} \to
      \mathfrak{H}} \norm{\mathcal{A}_0^{-1}}_{\mathfrak{H} \to
      \mathfrak{H}}\\
    &\le C_\mathfrak{V} \frac{M+\alpha}{m+\alpha} \norm{\mathcal{B}_\varepsilon}_{\mathfrak{H} \to \mathfrak{V}^*}.
  \end{align*}
Summing \eqref{eq:118} up, we conclude 
\begin{align}
  \label{eq:119}
  \norm{\mathcal{A}_\varepsilon^{-1} [\mathcal{J},
  \mathcal{B}_{\varepsilon}] \mathcal{A}_0^{-1}}_{\mathfrak{H}
    \to \mathfrak{H}}
  &\le \left( C_{\mathfrak{V}} +
    C_{\mathfrak{V}}\frac{M+\alpha}{m+\alpha}  \right) \left(
    \norm{\mathcal{B}_\varepsilon}_{\mathfrak{V} \to \mathfrak{H}} + 
    \norm{\mathcal{B}_\varepsilon}_{\mathfrak{H} \to \mathfrak{V}^*} \right).
\end{align}
From \eqref{eq:117} and \eqref{eq:119}, we obtain \eqref{eq:12}.
 
  \end{enumerate}
\end{proof}

\begin{proof}[Proof of \cref{thm:exis-corr}]
   Without loss of generality, assume $\alpha =0.$
\begin{enumerate}[wide]
\item Observe that \eqref{eq:73} is a simple consequence of
  \eqref{eq:5.1} and \eqref{eq:70}. To show that
  $\mathcal{K}_{\micro}$ is well-defined, we only need to prove
  that the operator
  $\Pi_{\fluct} \mathcal{M} \Pi_{\fluct}$ is boundedly
  invertible on the fluctuation subspace.

\item 
Let $V_{\fluct} \coloneqq \text{ran}(\Pi_{\fluct})$. Because
$\Pi_{\fluct}$ is an orthogonal projection on the Hilbert space
$\mathfrak{H}$, its range $V_{\fluct}$ is a closed subspace, and
therefore a Hilbert space in its own right with the induced inner
product. Define the operator $\mathcal{M}_{ff} : V_{\fluct} \to V_{\fluct}$ by:
\begin{align*}
\mathcal{M}_{ff} v = \Pi_{\fluct} \mathcal{M} v \quad \text{for all } v \in V_{\fluct}
\end{align*}
For any $v \in V_{\fluct}$, we have:
\begin{align*}
\|\mathcal{M}_{ff} v\| = \|\Pi_{\fluct} \mathcal{M} v\| \le \|\Pi_{\fluct}\| \|\mathcal{M}\| \|v\| \le M_{\square} \|v\|.
\end{align*} 
Thus, $\mathcal{M}_{ff}$ is a bounded linear operator on $V_{\fluct}$.

\item For any $v \in V_{\fluct}$, note that $\Pi_{\fluct} v = v$. Using the properties of orthogonal projections ($\Pi_{\fluct}^* = \Pi_{\fluct}$), we evaluate the inner product:
\begin{align*}
\Re \langle \mathcal{M}_{ff} v, v \rangle_{V_{\fluct}} = \Re\langle
  \Pi_{\fluct} \mathcal{M} v, v \rangle = \Re\langle
  \mathcal{M} v, \Pi_{\fluct} v \rangle = \Re\langle
  \mathcal{M} v, v \rangle \ge m_{\square} \|v\|^2 = m_{\square} \|v\|_{V_{\fluct}}^2
\end{align*}
where we use the uniform accretivity of $\mathcal{M}$, see
\eqref{eq:17.1}.  Therefore, $\mathcal{M}_{ff}$ is uniformly accretive
on $V_{\fluct}$.
\item By the Lax-Milgram Theorem, $\mathcal{M}_{ff}$ is a bijection from $V_{\fluct}$ onto itself. Consequently, it has a bounded inverse $\mathcal{M}_{ff}^{-1} : V_{\fluct} \to V_{\fluct}$ with norm $\|\mathcal{M}_{ff}^{-1}\| \le \frac{1}{m_{\square}}$.
Observe that $(\Pi_{\fluct} \mathcal{M}
\Pi_{\fluct})^{-1}$ in \eqref{eq:70} is precisely the inverse
$\mathcal{M}_{ff}^{-1}$ acting on $V_{\fluct}$. Thus
$\mathcal{K}_{\micro}$ is a composition of the following bounded operators:
\begin{itemize}
\item $\Pi_0$ projects the input into the macroscopic space.

\item  $\mathcal{M}$ acts on it.

\item $\Pi_{\fluct}$ projects the result into $V_{\fluct}$.

\item  $\mathcal{M}_{ff}^{-1}$ maps it boundedly within $V_{\fluct}$.

\item The final $\Pi_{\fluct}$ (right after the minus sign) is
  technically redundant (as the output is already in $V_{\fluct}$), we
  put it there so that it is easy to read off the range for later
  arguments.
\end{itemize}
Therefore, $\mathcal{K}_{\micro}$ is a well-defined, bounded operator on $\mathfrak{H}$.
\end{enumerate}
\end{proof}

\begin{proof}[Proof of \cref{lem:chec-H2}]
  \begin{enumerate}[wide]
  \item Since $\mathcal{S}_{\varepsilon}$ is unitary and
    $\mathcal{M}_{\varepsilon} = \mathcal{S}_{\varepsilon} \mathcal{M}
    \mathcal{S}_{\varepsilon}^{-1}$, we conclude that
    $\mathcal{M}_{\varepsilon}$ inherits the boundedness and
    accretivity of $\mathcal{M}$. Thus, we obtain \eqref{eq:27}.
  \item The boundedness in \eqref{eq:28} follows directly from
    \cref{thm:exis-corr}. Let $u \in \Pi_0$ be a purely macroscopic
    function. We want to show that $\Re\langle \mathcal{M}_0 u, u \rangle
    \ge m \|u\|^2$ for all $u \in \Pi_0$.

Define the corrector function $w = \mathcal{K}_{\micro} u$, then
$w$ satisfies \eqref{eq:29}. Take the inner product of \eqref{eq:29}
with the fluctuation function $w$, then note that the projection
$\Pi_{\fluct}$ vanishes by self-adjointness since $w \in
\Pi_{\fluct}$, we obtain
\begin{align*}
 \Re \langle \mathcal{M} (u + w), w \rangle + \alpha \|w\|^2 = 0
  \qquad \text{ or }
  \qquad
  \Re\langle \mathcal{M} (u + w), w \rangle = -\alpha \|w\|^2.
\end{align*}

\item Since $u \in \Pi_0$, we have $\Pi_0 u = u$. From \eqref{eq:73}, we
obtain for all $u \in \Pi_0$:
\begin{align*}
  \Re\langle \mathcal{M}_0 u, u \rangle
  &= \Re\langle \Pi_0 [\mathcal{M} (I + \mathcal{K}_{\micro})] \Pi_0
  u, u \rangle
  =\Re\langle  [\mathcal{M} (I + \mathcal{K}_{\micro})] \Pi_0 u,
  \Pi_0 u \rangle\\
  &= \Re\langle \mathcal{M} (I + \mathcal{K}_{\micro}) u, u \rangle
    = \Re\langle \mathcal{M} (u + w), u \rangle \\
  &=\Re\langle \mathcal{M} (u + w), u + w \rangle - \Re\langle \mathcal{M}
    (u + w), w \rangle\\
  &= \Re\langle \mathcal{M} (u + w), u + w \rangle + \alpha \|w\|^2\\
  &\ge m_{\square} \|u + w\|^2 + \alpha \|w\|^2\\
  &\ge m_{\square} \|u\|^2 + (m_{\square}+ \alpha) \|w\|^2\\
  &\ge m_{\square} \|u\|^2,
\end{align*}
which uses $\langle u,w \rangle =0$ (this is because $u \in \Pi_0$, $w=\mathcal{K}_{\micro}u \in \Pi_{\fluct}$, and $\Pi_0 \perp \Pi_{\fluct}$).
Thus, \eqref{eq:28} is proved.
\item Note that for $v \in \Pi_0^{\perp} = \Pi_{\fluct} \oplus
  \Pi_{\grad}$, we have $\mathcal{M}_0 v = 0$. Thus, $\langle
  \mathcal{M}_0 v, v \rangle = 0$ for all $v \in
  \Pi_0^{\perp}$. Therefore, $\mathcal{M}_0$ satisfies
  \ref{H:mate-prop} with $m = 0.$
  \end{enumerate}
\end{proof}

\begin{proof}[Proof of \cref{thm:zero-mode}]
\begin{enumerate}[wide]
\item Let $u_\varepsilon = \mathcal{A}_\varepsilon^{-1} f$, then
$\norm{u_{\varepsilon}} < \infty$ (by \cref{lem:inve-coer}) and $u_{\varepsilon}$ satisfies 
\begin{align}
  \label{eq:4}
\left( \mathcal{J}_{\macro} + \frac{1}{\varepsilon}
  \mathcal{J}_{\micro} + \mathcal{M} + \alpha I \right) \mathcal{S}_{\varepsilon}^{-1} u_\varepsilon = \mathcal{S}_{\varepsilon}^{-1}f. 
\end{align}

Let
$ \mathcal{L} \coloneqq \mathcal{J}_{\macro} + \mathcal{M} +
\alpha I$, then 
$\text{Re}\langle \mathcal{L} v, v \rangle \geq (m + \alpha)\|v\|^2$
for all $v \in \mathfrak{H}$.  From \eqref{eq:4}, the scaled solution $v_\varepsilon = \mathcal{S}_\varepsilon^{-1} u_\varepsilon$ satisfies
\begin{align}
\label{eq:7}
\left( \frac{1}{\varepsilon}  \mathcal{J}_{\micro}  +
  \mathcal{L}  \right) v_\varepsilon
  = f,
\end{align}
where $f \in \mathcal{P}_0$, which means $\mathcal{S}_{\varepsilon}^{-1}f=f$. We want to prove that $v_\varepsilon \to u_0$ strongly, where $u_0 \in \mathcal{P}_0$ is the unique solution to the effective equation:
\begin{align}
\label{eq:8}
\mathcal{P}_0  \mathcal{L}  \mathcal{P}_0 u_0 = f.
\end{align}
Since $\mathcal{S}_{\varepsilon}$ is unitary, we have
$\norm{v_{\varepsilon}} \le \norm{u_{\varepsilon}} < \infty.$ By the
Banach-Alaoglu theorem, it has a weakly convergent subsequence
$v_\varepsilon \rightharpoonup v_*$.
\item We  show that the weak limit $v_*$ has no microscopic fluctuations, meaning $v_* \in \ker( \mathcal{J}_{\micro} )$. 
Let $\phi \in \text{dom}( \mathcal{J}_{\micro} )$ be any smooth
test function. Using the skew-adjointness of $
\mathcal{J}_{\micro} $ and \eqref{eq:7},
\begin{align*}
  -\langle  \mathcal{J}_{\micro}  \phi, v_\varepsilon \rangle
  =
  \langle \phi,  \mathcal{J}_{\micro}  v_\varepsilon \rangle
  = \varepsilon \langle \phi, f -  \mathcal{L}  v_\varepsilon \rangle
\end{align*}
As $\varepsilon \to 0$, the right-hand side goes to $0$ because $v_\varepsilon$ is bounded. On the left side, because $v_\varepsilon \rightharpoonup v_*$ weakly, the inner product converges to $-\langle  \mathcal{J}_{\micro}  \phi, v_* \rangle$. 
Thus, $\langle  \mathcal{J}_{\micro}  \phi, v_* \rangle = 0$ for all $\phi \in \text{dom}( \mathcal{J}_{\micro} )$. Because $ \mathcal{J}_{\micro} $ is closed and densely defined, this implies $v_* \in \text{dom}( \mathcal{J}_{\micro} )$ and $ \mathcal{J}_{\micro}  v_* = 0$. 
Therefore, $v_* \in \ker( \mathcal{J}_{\micro} ) = \mathcal{P}_0$.
\item We now show that $v_*$ solves the macroscopic equation. 
Let $\psi \in \mathcal{P}_0 \cap \text{dom}( \mathcal{L} ^*)$ be a
test function for \eqref{eq:7}:
\begin{align*}
 \left\langle \psi, \left( \frac{1}{\varepsilon}
  \mathcal{J}_{\micro}  +  \mathcal{L}  \right) v_\varepsilon
  \right\rangle
  = \langle \psi, f \rangle.
\end{align*}
Because $\psi \in \ker( \mathcal{J}_{\micro} )$ and $ \mathcal{J}_{\micro} $ is skew-adjoint, $ \mathcal{J}_{\micro} ^* \psi = - \mathcal{J}_{\micro}  \psi = 0$. Therefore, the singular term vanishes:
$\frac{1}{\varepsilon} \langle \psi,  \mathcal{J}_{\micro}  v_\varepsilon \rangle = \frac{1}{\varepsilon} \langle  \mathcal{J}_{\micro} ^* \psi, v_\varepsilon \rangle = 0 $.
This leaves
\begin{align*}
\langle  \mathcal{L} ^* \psi, v_\varepsilon \rangle = \langle \psi, f \rangle.
\end{align*}
Taking the limit $\varepsilon \to 0$, the weak convergence $v_\varepsilon \rightharpoonup v_*$ yields:
\begin{align*}
  \langle \psi,  \mathcal{L}  v_* \rangle
  = \langle  \mathcal{L} ^* \psi, v_* \rangle
  = \langle \psi, f \rangle.
\end{align*}
Since $v_* \in \mathcal{P}_0$, we can insert the projection operator $\mathcal{P}_0 v_* = v_*$. Because this holds for all macroscopic test functions $\psi \in \mathcal{P}_0$, we obtain 
\begin{align}
\label{eq:23}
\mathcal{P}_0  \mathcal{L}  \mathcal{P}_0 v_* = f.
\end{align}
By the Lax-Milgram theorem (since $m+\alpha > 0$), this equation has a
unique solution $u_0$. Therefore, $v_* = u_0$ and \eqref{eq:8} is
proved.

Since every subsequence
converges to the same unique limit, the entire sequence converges
weakly, that is $v_\varepsilon \rightharpoonup u_0$.
\item We will prove the strong convergence $v_\varepsilon \to
  u_0$. Let $w_\varepsilon = v_\varepsilon - u_0$ be the error. We
  want to prove $\|w_\varepsilon\| \to 0$.  Substitute
  $v_\varepsilon = w_\varepsilon + u_0$ into \eqref{eq:7}:
\begin{align*}
\left( \frac{1}{\varepsilon}  \mathcal{J}_{\micro}  +  \mathcal{L}  \right) w_\varepsilon + \left( \frac{1}{\varepsilon}  \mathcal{J}_{\micro}  +  \mathcal{L}  \right) u_0 = f.
\end{align*}
Because $u_0 \in \mathcal{P}_0 = \ker( \mathcal{J}_{\micro} )$, the term $ \mathcal{J}_{\micro}  u_0 = 0$. Rearranging gives:
\begin{align*}
\left( \frac{1}{\varepsilon}  \mathcal{J}_{\micro}  +  \mathcal{L}  \right) w_\varepsilon = f -  \mathcal{L}  u_0.
\end{align*}
Note that the singular term $\frac{1}{\varepsilon} \langle
w_\varepsilon,  \mathcal{J}_{\micro}  w_\varepsilon \rangle$ is
purely imaginary and vanishes under taking real part. So
\begin{align*}
  \lim_{\varepsilon \to 0}(m + \alpha) \|w_\varepsilon\|^2
  \leq \lim_{\varepsilon \to 0}\text{Re} \langle w_\varepsilon,  \mathcal{L}  w_\varepsilon
  \rangle
  = \lim_{\varepsilon \to 0} \text{Re} \langle w_\varepsilon, f -
  \mathcal{L}  u_0 \rangle
  =0.
\end{align*}
Therefore, $\|v_\varepsilon - u_0\| \to 0$.
\item We rewrite \eqref{eq:8} as
\begin{align*}
\mathcal{P}_0 (\mathcal{J}_{\macro} + \mathcal{M} + \alpha I) \mathcal{P}_0 u_0 = \mathcal{P}_0 f.
\end{align*}
Using \eqref{eq:72}, \eqref{eq:65}, and \eqref{eq:66}, the differential operator completely vanishes on the fluctuation space, meaning $\mathcal{P}_0 \mathcal{J}_{\macro} \mathcal{P}_0 = \Pi_0 \mathcal{J}_{\macro} \Pi_0$. 
Thus, the operator on $\mathcal{P}_0 = \Pi_0 \oplus \Pi_{\fluct}$ takes the block matrix form:
\begin{align*}
L \coloneqq \mathcal{P}_0 (\mathcal{J}_{\macro} + \mathcal{M} +
  \alpha I) \mathcal{P}_0
  = \begin{pmatrix} \Pi_0
  \mathcal{J}_{\macro} \Pi_0 + \Pi_0 \mathcal{M} \Pi_0  + \alpha
  I & \Pi_0 \mathcal{M} \Pi_{\fluct} \\ \Pi_{\fluct} \mathcal{M} \Pi_0 & \Pi_{\fluct} \mathcal{M} \Pi_{\fluct} + \alpha I \end{pmatrix}
\end{align*}
Thus, $u_0 = L^{-1} f$.

\item Similarly, let $v_\varepsilon = \mathcal{ A }_0^{-1} f$, which
  satisfies:
\begin{align*}
\left( \mathcal{J}_{\macro} + \frac{1}{\varepsilon} \mathcal{J}_{\micro} + \mathcal{M}_0 + \alpha I \right) \mathcal{S}_{\varepsilon}^{-1} v_\varepsilon
 = \mathcal{S}_{\varepsilon}^{-1}f.
\end{align*}
By the same argument as above, $\mathcal{S}_{\varepsilon}^{-1} v_\varepsilon
 \to v_0 \in \mathcal{P}_0$, where $v_0 = L_0^{-1}f$ with
\begin{align*}
  L_0
  \coloneqq \mathcal{P}_0 (\mathcal{J}_{\macro} + \mathcal{M}_0 +
  \alpha I) \mathcal{P}_0 = \begin{pmatrix} \Pi_0 \mathcal{J}_{\macro} \Pi_0 + \mathcal{M}_0 + \alpha I & 0 \\ 0 & \alpha I \end{pmatrix}
\end{align*}
Note that the off-diagonal terms are zero because $\mathcal{M}_0$ 
defined in~\eqref{eq:5.1} acts \textit{only} on $\Pi_0$.

\item From~\ref{H:scal-stru}, we have 
\begin{align*}
\lim_{\varepsilon \to 0} \Pi_0 \left( \mathcal{A}_\varepsilon^{-1} -
  \mathcal{A}_0^{-1} \right) f
  &= \lim_{\varepsilon \to 0} \Pi_0 \left(  u_{\varepsilon} -
    v_{\varepsilon} \right)
    = \lim_{\varepsilon \to 0} \Pi_0 \mathcal{S}_{\varepsilon} \left(  \mathcal{S}_{\varepsilon}^{-1}u_{\varepsilon} -
    \mathcal{S}_{\varepsilon}^{-1}v_{\varepsilon} \right)\\
  &= \lim_{\varepsilon \to 0} \Pi_0 \left(  \mathcal{S}_{\varepsilon}^{-1}u_{\varepsilon} -
    \mathcal{S}_{\varepsilon}^{-1}v_{\varepsilon} \right)
    =  \Pi_0 \left(  u_0 -
    v_0 \right)
\end{align*}
We want to compare the macroscopic parts of the solutions $\Pi_0 u_0 $
and $\Pi_0 v_0$. Since $f \in \Pi_0$, we only need the top-left block
of the inverses of $L$ and $L_0$. By the standard block matrix
inversion formula, the top-left block of $L^{-1}$ is the inverse of
its Schur complement:
\begin{align*}
&\left( \Pi_0 \mathcal{J}_{\macro} \Pi_0 + \Pi_0 \mathcal{M}
  \Pi_0 - \Pi_0 \mathcal{M} \Pi_{\fluct} (\Pi_{\fluct}
  \mathcal{M} \Pi_{\fluct} + \alpha I)^{-1} \Pi_{\fluct}
  \mathcal{M} \Pi_0 + \alpha I \right)^{-1}\\
  &= \left( \Pi_0 \mathcal{J}_{\macro} \Pi_0 + \mathcal{M}_0 +
    \alpha I \right)^{-1} \qquad \text{ (by \eqref{eq:5.1}) },
\end{align*}
which is exactly the top-left block of $L_0^{-1}$.  Therefore, the
macroscopic projection of the difference is $ \Pi_0 (u_0 - v_0) = 0. $
\end{enumerate}
\end{proof}

\begin{proof}[Proof of \cref{thm:quan-homo}]
  \begin{enumerate}[wide]
  
\item By resolvent identity 
\begin{align}
\label{eq:148}
  \begin{split}
    \left( \mathcal{A}_\varepsilon^{-1} - (I +
    \mathcal{K}_\varepsilon)\mathcal{A}_0^{-1} \right)\Pi_0
    &=
    \left( \mathcal{A}_\varepsilon^{-1} \left( \mathcal{A}_0-
      \mathcal{A}_{\varepsilon} (I +
    \mathcal{K}_\varepsilon) 
      \right)\mathcal{A}_0^{-1} \right) \Pi_0\\
    &= \mathcal{A}_{\varepsilon}^{-1}
      \mathcal{V}_{\varepsilon}^{\corr} \mathcal{A}_0^{-1} \Pi_0
  \end{split}
\end{align}

  Applying \cref{thm:rate} to the solution \eqref{eq:143} of the
  generalized Sylvester equation \eqref{eq:11b}, using the resolvent
  identity \eqref{eq:148}
instead of
  $ \mathcal{A}_\varepsilon^{-1} - \mathcal{A}_0^{-1}
  =\mathcal{A}_\varepsilon^{-1} \mathcal{V}_{\varepsilon}
  \mathcal{A}_0^{-1}$ in \eqref{eq:11}, we obtain
\begin{align}
  \label{eq:144}
  \begin{split}
    &\norm{ \left( \mathcal{A}_\varepsilon^{-1} - (I +
    \mathcal{K}_\varepsilon)\mathcal{A}_0^{-1} \right)\Pi_0}_{\mathfrak{H} \to
      \mathfrak{H}}\\
    &\le C_1 \left(
    \norm{\mathcal{B}_\varepsilon^{\corr}}_{\mathfrak{V} \to \mathfrak{H}} + 
    \norm{\mathcal{B}_\varepsilon^{\corr}}_{\mathfrak{H} \to \mathfrak{V}^*} \right)\\
     &{}\qquad{}+{}
        C_2\left(
    \norm{\mathcal{R}_\varepsilon^{\corr}}_{\mathfrak{V} \to \mathfrak{H}} + 
    \norm{\mathcal{R}_\varepsilon^{\corr}}_{\mathfrak{H} \to \mathfrak{V}^*}
       \right)\\
    &{}\qquad{}+{} C_3 \norm{\Pi_{\perp} \mathcal{A}_0^{-1}\Pi_0}_{\mathfrak{H}
    \to \mathfrak{H}}\\
  &\le C_1 \left(
    \norm{\varepsilon
      \mathcal{S}_\varepsilon (\mathcal{J}_{\micro} - \eta I)^{-1} \bar{\mathcal{V}} \Pi_0
      \mathcal{S}_\varepsilon^{-1} \Pi_0}_{\mathfrak{V} \to \mathfrak{H}} + 
    \norm{\varepsilon
      \mathcal{S}_\varepsilon (\mathcal{J}_{\micro} - \eta I)^{-1} \bar{\mathcal{V}} \Pi_0
      \mathcal{S}_\varepsilon^{-1} \Pi_0}_{\mathfrak{H} \to \mathfrak{V}^*} \right)\\
     &{}\qquad{}+{}
        C_2\left(
    \norm{-\varepsilon \mathcal{S}_\varepsilon
      (\mathcal{J}_{\micro} - \eta I)^{-1}
      [\mathcal{J}_{\macro}, \bar{\mathcal{V}}] \Pi_0
      \mathcal{S}_\varepsilon^{-1}}_{\mathfrak{V} \to \mathfrak{H}} + 
    \norm{-\varepsilon \mathcal{S}_\varepsilon
      (\mathcal{J}_{\micro} - \eta I)^{-1}
      [\mathcal{J}_{\macro}, \bar{\mathcal{V}}] \Pi_0
      \mathcal{S}_\varepsilon^{-1}}_{\mathfrak{H} \to \mathfrak{V}^*}
       \right)\\
    &{}\qquad{}+{} C_3 \norm{\Pi_{\perp} \mathcal{A}_0^{-1}\Pi_0}_{\mathfrak{H}
    \to \mathfrak{H}}\\
    &\le  C_1 \varepsilon \left( \norm{(\mathcal{J}_{\micro} - \eta I)^{-1} \bar{\mathcal{V}} \Pi_0}_{\mathfrak{V} \to
    \mathfrak{H}} + \norm{(\mathcal{J}_{\micro} - \eta I)^{-1} \bar{\mathcal{V}} \Pi_0}_{\mathfrak{H} \to
    \mathfrak{V}^{*}} \right)\\
  &\qquad +{} C_2\varepsilon \left( \norm{(\mathcal{J}_{\micro} - \eta I)^{-1}
      [\mathcal{J}_{\macro}, \bar{\mathcal{V}}] \Pi_0}_{\mathfrak{V} \to
    \mathfrak{H}} + \norm{(\mathcal{J}_{\micro} - \eta I)^{-1}
      [\mathcal{J}_{\macro}, \bar{\mathcal{V}}] \Pi_0}_{\mathfrak{H} \to
    \mathfrak{V}^{*}} \right) 
    \\
  &\qquad +{} C_3 \norm{\Pi_{\perp}
    \mathcal{A}_0^{-1}\Pi_0}_{\mathfrak{H} \to \mathfrak{H}}.
  \end{split}
\end{align}

\item We only need to show that the homogenized smoothing term
  vanishes.  We prove the following lemma:
  \begin{lemma}
  \label{lem:inve-comm}
  If an invertible operator commutes with a projection, its inverse
  also commutes with that projection.
\end{lemma}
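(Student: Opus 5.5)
The statement to prove is \cref{lem:inve-comm}: if $T$ is invertible and $T P = P T$ for a projection $P$, then $T^{-1} P = P T^{-1}$. The plan is a direct algebraic manipulation. Multiply the identity $TP = PT$ by $T^{-1}$ on both sides, on the left and on the right:
\begin{align*}
T^{-1}(TP)T^{-1} = T^{-1}(PT)T^{-1},
\end{align*}
which collapses to $P T^{-1} = T^{-1} P$. No property of $P$ beyond linearity is used, so the argument does not depend on $P$ being a projection.

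The only point needing care is the setting. In the intended use, the operator is $\mathcal{A}_0$, and we want $\Pi_\perp \mathcal{A}_0^{-1}\Pi_0 = 0$. The relevant operator is unbounded, with domain $\mathfrak{V}$. Accordingly, I will read the hypothesis as
\begin{align*}
P(\dom T) \subset \dom T \quad\text{and}\quad TPu = PTu \quad \text{for } u \in \dom T,
\end{align*}
and take $T^{-1}\colon \mathfrak{H} \to \dom T$ to be a bijection. This is the situation supplied by \cref{lem:inve-coer}.

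Fix $f \in \mathfrak{H}$ and set $u = T^{-1} f \in \dom T$. Then $Pu \in \dom T$, and
\begin{align*}
T(Pu) = P(Tu) = Pf.
\end{align*}
Applying $T^{-1}$ gives $Pu = T^{-1}Pf$, that is, $PT^{-1}f = T^{-1}Pf$. This is the only step where the domain invariance matters, and I expect it to be the sole subtlety.

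After the lemma, the proof of \cref{thm:quan-homo} will continue as follows. By \eqref{eq:115} we have $[\mathcal{J},\Pi_0]=0$. Also $\mathcal{M}_0 = \Pi_0\mathcal{M}_0\Pi_0$ commutes with $\Pi_0$. Hence $\mathcal{A}_0$ commutes with $\Pi_0$, and the lemma yields
\begin{align*}
\Pi_\perp\mathcal{A}_0^{-1}\Pi_0 = \mathcal{A}_0^{-1}\Pi_\perp\Pi_0 = 0,
\end{align*}
so the $C_3$ term in \eqref{eq:144} vanishes.

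For the domain invariance $\Pi_0\mathfrak{V}\subset\mathfrak{V}$, I will argue from \eqref{eq:111}: that identity shows $\Pi_0 u$ lies in the domain of $\mathcal{J}_{\macro}\Pi_0$, and \eqref{eq:140}--\eqref{eq:141} hold on $\mathfrak{V}$. This will be stated rather than belabored.
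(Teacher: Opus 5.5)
Your proof is correct and follows the paper's argument: the paper also multiplies $TP=PT$ by $T^{-1}$ on the left and on the right and simplifies to $PT^{-1}=T^{-1}P$. Your pointwise version, with the hypothesis $P(\dom T)\subset\dom T$, is the rigorous form of that computation for an unbounded $T$ such as $\mathcal{A}_0$; the paper's step $(T^{-1}T)PT^{-1}=PT^{-1}$ tacitly needs $PT^{-1}f\in\dom T$. Be aware, though, that the invariance $\Pi_0\mathfrak{V}\subset\mathfrak{V}$ you need for the application is a genuine domain condition, and the formal identities \eqref{eq:111} and \eqref{eq:140}--\eqref{eq:141} do not by themselves establish it.
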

Indeed, let $T$ be an invertible operator and $P$ be a projection such
that $TP = PT$. Multiply on the left by $T^{-1}$ and on the right by
$T^{-1}$: 
\begin{align*}
  T^{-1} (T P) T^{-1}
  &= T^{-1} (P T) T^{-1}\\
  (T^{-1} T) P T^{-1}
  &= T^{-1} P (T T^{-1})\\
  P T^{-1}
  &= T^{-1} P.
\end{align*}

From \eqref{eq:115}, $[\mathcal{J},\Pi_0] = 0$. By \eqref{eq:73},
$[\mathcal{M}_0, \Pi_0]=0$. By linearity of commutator, we obtain
$[\mathcal{A}_0, \Pi_0] = 0$, thus by \cref{lem:inve-comm},
$[\mathcal{A}_0^{-1},\Pi_0] = 0$. We rewrite 
\begin{align}
\label{eq:1}
\Pi_{\perp} \mathcal{A}_0^{-1} \Pi_0 = \Pi_{\perp} \Pi_0
  \mathcal{A}_0^{-1} = 0. 
\end{align}
This is the quantitative equality for the fact that homogenization is
a low-frequency phenomenon.
  \end{enumerate}
\end{proof}

\begin{proof}[Proof of \cref{pro:freq-cut}]
  \begin{enumerate}[wide]
  \item Let $u \in \mathfrak{V}$. Because $\bar{\mathcal{V}}\Pi_0$
    maps $\mathfrak{V} \to \mathfrak{H}$, we have
    $\bar{\mathcal{V}}\Pi_0 u \in \mathfrak{H}$. Apply the spectral
    theorem in $\mathfrak{H}$:
\begin{align*}
\norm{ (\mathcal{J}_{\micro} + \delta\varepsilon I)^{-1}
  \bar{\mathcal{V}} \Pi_0 u }^2 = \int_{\RR} \frac{1}{\lambda^2 +
  (\delta\varepsilon)^2} d \norm{E_\lambda \bar{\mathcal{V}} \Pi_0 u}^2 
\end{align*}
Taking the supremum over all $u \in \mathfrak{V}$ with $\norm{u}_\mathfrak{V} = 1$, we obtain:
\begin{align*}
\norm{(\mathcal{J}_{\micro} + \delta\varepsilon I)^{-1}
  \bar{\mathcal{V}} \Pi_0 }_{\mathfrak{V} \to \mathfrak{H}}^2 =
  \sup_{\norm{u}_\mathfrak{V}=1} \int_{\RR} \frac{1}{\lambda^2 +
  (\delta\varepsilon)^2} d \norm{E_\lambda \bar{\mathcal{V}} \Pi_0 u}^2
\end{align*}

\item For any fixed $u \in \mathfrak{V}$ with $\norm{u}_{\mathfrak{V}} = 1$, we have:
\begin{align*}
\int_{\RR} \frac{1}{\lambda^2 + (\delta\varepsilon)^2} d
  \norm{E_\lambda \bar{\mathcal{V}}\Pi_0 u}^2 = \left(
  \int_{\abs{\lambda} < \lambda_0} + \int_{\abs{\lambda} \ge
  \lambda_0} \right) \frac{1}{\lambda^2 + (\delta\varepsilon)^2} d \norm{E_\lambda \bar{\mathcal{V}}\Pi_0 u}^2
\end{align*}
In the high-frequency regime $\abs{\lambda} \ge \lambda_0$, we have
$ \frac{1}{\lambda^2 + (\delta\varepsilon)^2} \le \frac{1}{\lambda_0^2},$
thus
\begin{align*}
\int_{\abs{\lambda} \ge \lambda_0} \frac{1}{\lambda^2 +
  (\delta\varepsilon)^2} d \norm{E_\lambda \bar{\mathcal{V}}\Pi_0
  u}^2
  &\le \frac{1}{\lambda_0^2} \int_{\abs{\lambda} \ge \lambda_0} d
    \norm{E_\lambda \bar{\mathcal{V}}\Pi_0 u}^2\\
  &\le  \frac{1}{\lambda_0^2}\int_{\RR} d \norm{E_\lambda
    \bar{\mathcal{V}}\Pi_0 u}^2 =
    \frac{1}{\lambda_0^2} \norm{\bar{\mathcal{V}}\Pi_0 u}^2\\
  &\le \frac{\norm{\bar{\mathcal{V}}\Pi_0}_{\mathfrak{V} \to \mathfrak{H}}^2}{\lambda_0^2} \norm{u}_{\mathfrak{V}}^2.
\end{align*}
Recall that $\mathcal{J}$ is bounded in the graph norm \eqref{eq:161},
so by \cref{def:corr-resi}, we get
$\norm{\bar{\mathcal{V}}\Pi_0}_{\mathfrak{V} \to \mathfrak{H}} <
\infty$. Together with the above estimate, we obtain \eqref{eq:157}.

\item Let $f \in \mathfrak{H}$. Because $\bar{\mathcal{V}}\Pi_0$ maps
  $\mathfrak{H} \to \mathfrak{V}^*$, we have
  $\bar{\mathcal{V}}\Pi_0 f \in \mathfrak{V}^*$. Therefore, we need to
  show that $E_{\lambda}$ can be extended uniquely a self-adjoint,
  orthogonal projection family on the dual space $\mathfrak{V}^*$,
  which we still denote by $E_{\lambda}$.

  Indeed, by definition, $E_\lambda$ is self-adjoint on
  $\mathfrak{H}$. By assumption, $\mathcal{J}_{\micro}$ is strongly
  commutes with $\mathcal{J}$, so $E_{\lambda}$ commutes with
  $\mathcal{J}$. We have:
\begin{align*}
  \langle E_\lambda u, v \rangle_{\mathfrak{V}}
  = \langle E_\lambda u, v \rangle + \langle \mathcal{J}E_\lambda u,
  \mathcal{J}v \rangle
  = \langle u, E_\lambda v \rangle + \langle \mathcal{J}u, E_\lambda
  \mathcal{J}v \rangle
  = \langle u, E_\lambda v \rangle_{\mathfrak{V}}.
\end{align*}
Thus $E_\lambda$ is self-adjoint on $\mathfrak{V}$. By duality and
Riesz theorem, $E_\lambda$ extends uniquely to a self-adjoint,
orthogonal projection family on the dual space $\mathfrak{V}^*$.
Therefore, we can apply the spectral theorem in $\mathfrak{V}^*$:
\begin{align}
\label{eq:164}
 \norm{(\mathcal{J}_{\micro} + \delta\varepsilon I)^{-1}
  \bar{\mathcal{V}} \Pi_0 f }_{\mathfrak{V}^*}^2 = \int_{\RR}
  \frac{1}{\lambda^2 + (\delta\varepsilon)^2} d \norm{E_\lambda \bar{\mathcal{V}} \Pi_0 f}_{\mathfrak{V}^*}^2
\end{align}
Taking the supremum over all $f \in \mathfrak{H}$ with $\norm{f} = 1$, we obtain:
\begin{align}
\label{eq:165}
\norm{(\mathcal{J}_{\micro} + \delta\varepsilon I)^{-1}
  \bar{\mathcal{V}} \Pi_0}_{\mathfrak{H} \to \mathfrak{V}^*}^2 =
  \sup_{\norm{f}=1} \int_{\RR} \frac{1}{\lambda^2 +
  (\delta\varepsilon)^2} d \norm{E_\lambda \bar{\mathcal{V}} \Pi_0 f}_{\mathfrak{V}^*}^2
\end{align}
The rest follows the same argument as above.
\end{enumerate}

\end{proof}

\begin{proof}[Proof of \cref{pro:resi-fiel}]
  \begin{enumerate}[wide]
  \item Let $u \in \mathfrak{H}$ and $u_0 = \Pi_0 u$. By \cref{def:resi-fiel},
    $F_{\bns} u_0 = [\mathcal{J}_{\macro}, \bar{\mathcal{V}}] u_0 =
    \mathcal{J}_{\macro} \bar{\mathcal{V}} u_0 - \bar{\mathcal{V}}
    \mathcal{J}_{\macro} u_0$.  Since $u_0 \in \Pi_0$, we know by
    definition that $\bar{\mathcal{V}} u_0 = F_{\sta} u_0$.
    Furthermore, by \eqref{eq:138},
    $\mathcal{J}_{\macro} u_0 \in \Pi_0$, so
    $\bar{\mathcal{V}}\mathcal{J}_{\macro} u_0 = F_{\sta}
    (\mathcal{J}_{\macro} u_0)$.  We conclude
\begin{align}\label{eq:156}
F_{\bns} u_0 = \mathcal{J}_{\macro} (F_{\sta} u_0) - F_{\sta} (\mathcal{J}_{\macro} u_0).
\end{align}
  \item From \cref{def:resi-fiel} and \eqref{eq:31.2}, we obtain
\begin{align}
\label{eq:150}
F_{\sta}u_0 = \mathcal{M}_0 u_0 - \mathcal{M}(u_0 + w) - \mathcal{J}_{\macro} w - \alpha w.
\end{align}
By \cref{thm:exis-corr}, the homogenized operator $\mathcal{M}_0 =
\Pi_0 \mathcal{M}(I + \mathcal{K}_{\micro})\Pi_0$, so $\mathcal{M}_0 u_0 = \Pi_0 \mathcal{M}(u_0 + w)$. 
Projecting $F_{\sta} u_0$ onto $\Pi_0$ yields:
\begin{align*}
  \Pi_0 F_{\sta}u_0
  &=\Pi_0 \mathcal{M}_0 u_0 - \Pi_0 \mathcal{M}(u_0 + w) - \Pi_0
    \mathcal{J}_{\macro} w - \alpha \Pi_0 w \\
  &= \Pi_0 \Pi_0 \mathcal{M}(u_0 + w) - \Pi_0 \mathcal{M}(u_0 + w) - \Pi_0
    \mathcal{J}_{\macro}\Pi_{\fluct} w - \alpha \Pi_0 \Pi_{\fluct} w = 0,
\end{align*}
where we use \eqref{eq:65} and $\Pi_0 \perp \Pi_{\fluct}$.

Now, we project $F_{\sta}$ onto the fluctuation space $\Pi_{\fluct}$:
\begin{align*}
  \Pi_{\fluct} F_{\sta}u_0
  &= \Pi_{\fluct} \mathcal{M}_0 u_0 - \Pi_{\fluct} \mathcal{M}(u_0 +
    w) - \Pi_{\fluct} \mathcal{J}_{\macro} w - \alpha \Pi_{\fluct} w
  \\
  &=\Pi_{\fluct} \Pi_0\mathcal{M}_0 u_0 - \Pi_{\fluct} \mathcal{M}(u_0
    + w) - \Pi_{\fluct} \mathcal{J}_{\macro}\Pi_{\fluct} w - \alpha  w
  \\
  &= - \Pi_{\fluct} \mathcal{M}(u_0
    + w) - \alpha w = 0,
\end{align*}
where we used \eqref{eq:66} and \cref{def:corr-prob}.

Since $F_{\sta} u_0$ has no component in $\Pi_0$ and no component in
$\Pi_{\fluct}$, it must live  in the remaining subspace
$\Pi_{\grad}$:
\begin{align*}
 F_{\sta} u_0= \Pi_{\grad} F_{\sta} u_0= - \Pi_{\grad} \left( \mathcal{M}(u_0 + w) + \mathcal{J}_{\macro} w \right).
\end{align*}

\item 
Apply the macroscopic projection $\Pi_0$ to \eqref{eq:156} and use \eqref{eq:55}:
\begin{align*}
  \Pi_0 F_{\bns} u_0
  &= \Pi_0 \mathcal{J}_{\macro} (F_{\sta} u_0) - \Pi_0 F_{\sta}
    (\mathcal{J}_{\macro} u_0)\\
  &=\Pi_0 \mathcal{J}_{\macro} \Pi_{\grad}(F_{\sta} u_0) - \Pi_0 \Pi_{\grad}F_{\sta}
    (\mathcal{J}_{\macro} u_0) = 0,
\end{align*}
where we used the adjoint of equation \eqref{eq:62}.

By the orthogonal decomposition $I = \Pi_0 \oplus \Pi_{\fluct} \oplus
\Pi_{\grad}$, \eqref{eq:65}, and \eqref{eq:66} :
\begin{align*}
  \Pi_{\fluct} \mathcal{J}_{\macro} \Pi_{\grad}
  &= \Pi_{\fluct} \mathcal{J}_{\macro} (I - \Pi_0 - \Pi_{\fluct})\\
  &= \Pi_{\fluct} \mathcal{J}_{\macro} - \Pi_{\fluct}
    \mathcal{J}_{\macro} \Pi_0 - \Pi_{\fluct} \mathcal{J}_{\macro}
    \Pi_{\fluct}\\
  &=\Pi_{\fluct} \mathcal{J}_{\macro}.
\end{align*}
We now apply $\Pi_{\fluct}$ to \eqref{eq:156} and use \eqref{eq:55}:
\begin{align*}
  \Pi_{\fluct} F_{\bns} u_0
  &= \Pi_{\fluct} \mathcal{J}_{\macro} (F_{\sta} u_0) - \Pi_{\fluct}
    F_{\sta} (\mathcal{J}_{\macro} u_0)\\
  &= \Pi_{\fluct} \mathcal{J}_{\macro} (- \Pi_{\grad} (\mathcal{M}(u_0
    + w) + \mathcal{J}_{\macro} w)) - \Pi_{\fluct} \Pi_{\grad}
    F_{\sta} (\mathcal{J}_{\macro} u_0)\\
  &= - \Pi_{\fluct} \mathcal{J}_{\macro} ( \mathcal{M}(u_0 + w) + \mathcal{J}_{\macro} w ).
\end{align*}

Let $w' \coloneqq \mathcal{K}_{\micro} \mathcal{J}_{\macro}
u_0$. Apply $\Pi_{\grad}$ to \eqref{eq:156} and use \eqref{eq:55}:
\begin{align*}
  \Pi_{\grad} F_{\bns} u_0
  &= \Pi_{\grad} \mathcal{J}_{\macro} (F_{\sta} u_0) - F_{\sta}
    (\mathcal{J}_{\macro} u_0)\\
  &= \Pi_{\grad} \mathcal{J}_{\macro} \Pi_{\grad}(F_{\sta} u_0) + \Pi_{\grad} ( \mathcal{M}(\mathcal{J}_{\macro} u_0 + w') + \mathcal{J}_{\macro} w' )
\end{align*}
For the first term, we need to evaluate $\Pi_{\grad} \mathcal{J}_{\macro} \Pi_{\grad}$. Using the same decomposition trick:
\begin{align*}
  \Pi_{\grad} \mathcal{J}_{\macro} \Pi_{\grad}
  &= \Pi_{\grad} \mathcal{J}_{\macro} - \Pi_{\grad}
    \mathcal{J}_{\macro} \Pi_0 - \Pi_{\grad} \mathcal{J}_{\macro}
    \Pi_{\fluct}\\
  &=\Pi_{\grad} \mathcal{J}_{\macro} - \mathcal{J}_{\macro} \Pi_{\fluct},
\end{align*}
where we used \eqref{eq:62}, \eqref{eq:65}, and \eqref{eq:66}.
Together with \eqref{eq:55}:
\begin{align*}
  \Pi_{\grad} \mathcal{J}_{\macro} \Pi_{\grad}(F_{\sta} u_0)
  &= - \Pi_{\grad} \mathcal{J}_{\macro} ( \mathcal{M}(u_0 + w) + \mathcal{J}_{\macro} w ) \\
  &\qquad{}+{}  \mathcal{J}_{\macro} \Pi_{\fluct} ( \mathcal{M}(u_0 +
    w) + \mathcal{J}_{\macro} w )\\
  &= - \Pi_{\grad} \mathcal{J}_{\macro} ( \mathcal{M}(u_0 + w) + \mathcal{J}_{\macro} w ) \\
  &\qquad{}+{}  \mathcal{J}_{\macro} \Pi_{\fluct}  \mathcal{M}(u_0 +
    w) + \mathcal{J}_{\macro} \Pi_{\fluct}  \mathcal{J}_{\macro}
    \Pi_{\fluct}w \\
  &= - \Pi_{\grad} \mathcal{J}_{\macro} ( \mathcal{M}(u_0 + w) + \mathcal{J}_{\macro} w ) \\
  &\qquad{}+{} \Pi_{\grad} \mathcal{J}_{\macro} \Pi_{\fluct} ( -\alpha
    w)\\
  &= -\Pi_{\grad} \left(  \mathcal{J}_{\macro} ( \mathcal{M}(u_0 + w)
    + \mathcal{J}_{\macro} w ) + \alpha\mathcal{J}_{\macro} \Pi_{\fluct}
    w\right)\\
  &= -\Pi_{\grad} \left(  \mathcal{J}_{\macro} \mathcal{M}(u_0 + w)
    + \mathcal{J}_{\macro}^2 w  + \alpha\mathcal{J}_{\macro} w\right)
\end{align*}
where we used \eqref{eq:62}, \eqref{eq:65}, \eqref{eq:66} and \cref{def:corr-prob}.

Therefore, 
\begin{align*}
  \Pi_{\grad} F_{\bns} u_0
  &=  -\Pi_{\grad} \left(  \mathcal{J}_{\macro} \mathcal{M}(u_0 + w)
    + \mathcal{J}_{\macro}^2 w  + \alpha\mathcal{J}_{\macro} w\right)\\
  &\qquad{}+{} \Pi_{\grad} ( \mathcal{M}(\mathcal{J}_{\macro} u_0 +
    w') + \mathcal{J}_{\macro} w' )\\
  &= - \Pi_{\grad} ( \mathcal{J}_{\macro} \mathcal{M}(u_0 + w) -
    \mathcal{M}(\mathcal{J}_{\macro} u_0 + w') +
    \mathcal{J}_{\macro}^2 w - \mathcal{J}_{\macro} w' + \alpha
    \mathcal{J}_{\macro} w )\\
  &= - \Pi_{\grad}\left( [\mathcal{J}_{\macro}, \mathcal{M}(I + \mathcal{K}_{\micro})] u_0 + [\mathcal{J}_{\macro}, \mathcal{J}_{\macro} \mathcal{K}_{\micro}] u_0 + \alpha \mathcal{J}_{\macro} w  \right).
\end{align*}

  \end{enumerate}
\end{proof}

\section*{Acknowledgement}

The ideas underlying this work originated many years ago when the
first author was a graduate student under the guidance of his
co-authors. However, they would never be realized into this work
without the following people, whom the first author wants to send his
heartfelt acknowledgement. First, to Guillaume Bal for numerous
insightful discussions, one of which inspired the qualitative
homogenization theorem. Second, to Valery P. Smyshlyaev for
introducing him to the operator approach to homogenization in the
tradition of Birman, Suslina, Zhikov, Pastukhova, and others. Finally,
to Tho Nguyen Duc and Dinh Duong Nguyen, for the many hours spent
studying functional calculus and spectral theory together.

\bibliographystyle{plain}%{habbrv}
\bibliography{homogenization}
%\bibliography{regularity}
% \bibliographystyle{abbrv}

\end{document}
%%% Local Variables:
%%% mode: latex
%%% TeX-master: t
%%% End: